\pdfoutput=1
\documentclass{amsart}
\usepackage{amsmath,
 amssymb,
 fullpage,
 mathtools,
 mathpazo,
 thmtools,
 enumerate
}
\usepackage[
 pagebackref,
 colorlinks,
 allcolors=blue
]{hyperref}

\usepackage[capitalize]{cleveref}

\newlength{\bibitemsep}
\setlength{\bibitemsep}{0.75ex plus 0.05ex minus 0.05ex}
\newlength{\bibparskip}
\setlength{\bibparskip}{0pt}
\let\oldthebibliography\thebibliography
\renewcommand\thebibliography[1]{%
 \oldthebibliography{#1}%
 \setlength{\parskip}{\bibparskip}%
 \setlength{\itemsep}{\bibitemsep}%
}

\DeclareMathOperator{\ord}{ord}

\newcommand{\QQ}{\mathbb{Q}}

\newcommand{\ediv}{\mathrel{\|}} 

\newcommand{\zpm}{\mathbb{Z}_p^{\times}}
\newcommand{\zpa}{\mathbb{Z}_p}

\newtheorem{defi}{Definition}[section]
\newtheorem{thm}[defi]{Theorem}
\newtheorem{prop}[defi]{Proposition}
\newtheorem{lemma}[defi]{Lemma}
\newtheorem{cor}[defi]{Corollary}
\newtheorem{remark}[defi]{Remark}
\newtheorem{letthm}{Theorem}

\newenvironment{revised}{}{}
\newcommand{\rev}[1]{#1}

\begin{document}

\title[Rankin--Selberg $L$-functions in universal deformation families]{$p$-adic Rankin--Selberg $L$-functions in universal deformation families and functional equations}
 \renewcommand{\emailaddrname}{\emph{Email}}
 \renewcommand{\urladdrname}{\emph{ORCID}}

\author{Zeping Hao}
\address[Z.H.]{Academy of Mathematics and Systems Science, Chinese Academy of Sciences,
No. 55 Zhongguancun East Road, Beijing, CN 100190}
\email{zepinghao@amss.ac.cn}
 \urladdr{\href{https://orcid.org/0009-0000-6521-5553}{\texttt{orcid.org/0009-0000-6521-5553}}}

\author{David Loeffler}
\address[D.L.]{UniDistance Suisse, Schinerstrasse 18, 3900 Brig, Switzerland}
\email{david.loeffler@unidistance.ch}
 \urladdr{\href{https://orcid.org/0000-0001-9069-1877}{\texttt{orcid.org/0000-0001-9069-1877}}}

\thanks{Z.H is supported by University of Warwick's Chancellor's International Scholarship. D.L is supported by the ERC Consolidator grant “Shimura varieties and the BSD conjecture” (grant ID 101001051).}
\date{}

\subjclass[2020]{11F67, 11F80, 11R23}

\maketitle

\begin{abstract}
 We construct a $p$-adic Rankin--Selberg $L$-function associated to the product of two families of modular forms, where the first is an ordinary (Hida) family, and the second an arbitrary universal-deformation family (without any ordinarity condition at $p$). This gives a function on a 4-dimensional base space -- strictly larger than the ordinary eigenvariety, which is 3-dimensional in this case. We prove our $p$-adic $L$-function interpolates all critical values of the Rankin--Selberg $L$-functions for the classical specialisations of our family, and derive a functional equation for our $p$-adic $L$-function by applying a recent deep result of Helm and Moss on universal $\gamma$-factors.
\end{abstract}

\section{Introduction}

 \subsection{Overview}

  To two normalised modular eigenforms $f=\sum\limits_{n \geq 1}a_{n}q^{n}$ and $g=\sum\limits_{n \geq 1} b_{n }q^{n}$ of weights $k > l$, one can attach a Dirichlet $L$-series $L\left( f,g,s\right)=\sum\limits_{n\geq 1}c_{n}n^{-s}$, called the \textit{Rankin--Selberg L-function}, such that $c_{\ell}=a_{\ell}b_{\ell}$ for $\ell$ a prime. A search for its $p$-adic counterparts was initiated in the 80s by Panchishkin \cite{Pan82} and Hida \cite{Hid85}, assuming $f$ is ordinary. Since then it has been a fruitful area of mathematical research. A milestone work is given in Hida's monumental paper \cite{Hid88}, where he constructed a three-variable $p$-adic Rankin--Selberg $L$-function, allowing $f$ and $g$ both to vary in Hida families. This $p$-adic $L$-function is in general \textit{imprimitive}, in the sense that it interpolates the critical values of complex Rankin--Selberg $L$-functions whose local Euler factors at ramified primes do not necessarily agree with the automorphic $L$-factors (as defined in \cite{Jac72} for example).

  Hida's work has been generalised in many directions. For example, one can consider non-ordinary $f$ and $g$, and ask for variations in Coleman families; this is studied in depth in e.g.\ \cite{LZ16, loe18, AI21}. In a different direction, Chen and Hsieh constructed in \cite{CH18} primitive $p$-adic Rankin--Selberg $L$-functions for Hida families, whose local factors agree with those of \cite{Jac72} in all cases, using the results of Fouquet and Ochiai \cite{FO12} on rigidity of automorphic types in Hida families.

  A limitation of the above constructions is that the $p$-adic $L$-functions constructed can only vary in families whose automorphic representations at $p$ are non-super\-cuspidal, with the additional data of a ``$p$-refinement" (corresponding to a 1-dimen\-sional invariant subspace in the local Galois representation at $p$ for Hida families, and in its $\left( \varphi, \Gamma\right)$-module for Coleman families). Consequently, the parameter spaces for these families are the three-dimensional \textit{eigenvarieties}, parametrising pairs of $p$-refined modular forms (with an additional variable for twisting). However, one expects that the existence of a $p$-refinement for the second family should be unnecessary. This is in accordance with general conjectures of Panchishkin, predicting that to define a $p$-adic $L$-function for some Galois representation, it suffices that the Galois representation have a single local subrepresentation (or sub-$(\varphi, \Gamma)$-module) at $p$ of a specific dimension -- a full flag of local subrepresentations is unnecessary. In the Rankin--Selberg case, ordinarity of just one of the two families is sufficient to construct such a subrepresentation.

  Accordingly, the first aim of this paper is to give a construction of $p$-adic Rankin--Selberg $L$-functions that vary over larger parameter spaces, of dimension four, which parametrise pairs $(f, g)$ of modular forms together with an ordinary $p$-stabil\-isation of $f$ (but \emph{not} of $g$). This gives a larger parameter space, as we are imposing a weaker condition; and our $p$-adic $L$-function now covers more points than other $p$-adic Rankin--Selberg $L$-functions in literature so far, as we allow modular forms whose local representations at $p$ are supercuspidal. These parameter spaces are examples of the (mostly conjectural) ``big parabolic eigenvarieties'' introduced by the second author in \cite{loe21}. The existence of such a $p$-adic $L$-function is sketched in \cite{loe21} for families of tame level 1; in the present paper we supply details of the construction and, more importantly, extend the argument to allow general tame levels. For a full statement see Theorem A below.

  The second goal of this paper is to prove a functional equation for our $p$-adic $L$-function. This is more difficult than it might appear, since the $\varepsilon$-factors usually used in formulating functional equations do not seem to vary analytically over universal-deformation families. Hence we use the alternative formulation of functional equations via $\gamma$-factors, rather than $\epsilon$-factors; and we apply a deep result of Helm and Moss \cite{hm17} showing that these $\gamma$-factors at ramified primes (away from $p$) can be interpolated $p$-adically. See Theorem B for the precise statement.

  We hope to consider the ``Selmer-group'' counterpart of these constructions -- defining Selmer groups in these four-parameter families which interpolate Bloch--Kato Selmer groups, and formulating an Iwasawa main conjecture relating these to the $p$-adic $L$-function -- in a future work. We are also optimistic that these results should generalise to Hilbert modular forms.

\subsection{Outline of the construction}

  Let $p > 2$ be a fixed prime, and $S_1, S_2$ finite sets of places $\QQ$ each containing $p$ and $\infty$. Let $\overline{\rho}_{1}: G_{\QQ, S_1} \to \operatorname{GL}_{2}\left( \mathbb{F}\right)$ and $\overline{\rho}_{2}:G_{\QQ, S_2} \to \operatorname{GL}_{2}\left( \mathbb{F}\right)$ be two fixed residual representations over $\mathbb{F}$, which is a finite field of characteristic $p$. We require that these be \textit{\rev{absolutely} irreducible} and \textit{odd}, so they come from modular forms. In addition, we assume $\overline{\rho}_{1}$ to be $p$-ordinary (but we make no such assumption on $\overline{\rho}_{2}$). Mazur's deformation theory \cite{maz89} then gives pairs $\left( \rho^{\ord},\mathcal{R}^{\ord}\right)$ and $\left( \rho^{\operatorname{univ}},\mathcal{R}^{\operatorname{univ}}\right)$, parameterising all ordinary deformations of $\overline{\rho}_{1}$, and all deformations of $\overline{\rho}_{2}$, respectively. Under some mild additional assumptions on $\overline{\rho}_{1}$ and $\overline{\rho}_{2}$ (see below), these universal deformation rings are naturally isomorphic to certain Hecke algebras. By work of \cite{epw05}, the universal ordinary deformation $\rho^{\ord}: G_{\QQ, S_1} \to \operatorname{GL}_{2}\left( \mathcal{R}^{\ord}\right)$ descends to a representation $\rho\left( \mathfrak{a}\right): G_{\QQ, S_1} \to \operatorname{GL}_{2}\left( \mathbb{T}_{\mathfrak{a}}\right)$, where $\mathbb{T}_{\mathfrak{a}}$ is the integral closure of an irreducible component of Hida's universal ordinary Hecke algebra (at a new level).

  We can then attach to $\rho^{\ord}$ and $\rho^{\operatorname{univ}}$ \textit{universal eigenforms} $\mathcal{F}$ and $\mathcal{G}$, which are defined as the product of reciprocal Euler factors of the associated representations at each prime. To avoid the technical difficulty of interpolating Euler factors at bad primes, we exclude those Euler factors from the definition of $\mathcal{G}$, and we write $\mathcal{G}^{[pN]}$ for $\mathcal{G}$ to reflect this depletion ($N$ is some large enough integer related to the tame conductors of $\rho^{\ord}$ and $\rho^{\operatorname{univ}}$). Then the isomorphism between universal deformation rings and Hecke algebras, together with the duality between $p$-adic modular forms and Hecke algebras studied in \cite{gou88}, allow us to identify $\mathcal{F}$ and $\mathcal{G}$ as $p$-adic eigenforms with coefficients in $\mathbb{T}_{\mathfrak{a}}$ and $\mathcal{R}^{\operatorname{univ}}$. In particular, by the construction of $\mathbb{T}_{\mathfrak{a}}$, the $p$-adic eigenform $\mathcal{F}$ can be identified as a primitive Hida family.

  Then we may use Hida's theory to find a linear functional $\lambda_{\mathcal{F}}$ that is dual to $\mathcal{F}$, and hence define a $p$-adic $L$-function $\mathcal{L}$ by $\mathcal{L}\coloneqq \lambda_{\mathcal{F}}\left( e^{\ord}\left( \mathcal{G}^{[pN]} \cdot \boldsymbol{F}^{[p]}\right)\right)$, where $e^{\ord}$ is Hida's ordinary projector, and $\boldsymbol{F}^{[p]}$ is an appropriate $p$-adic family of Eisenstein series defined in \cite[\S5.3]{LLZ14}. (This is a slight over-simplification; actually we will twist $\lambda_{\mathcal{F}}$ and $\mathcal{G}$ by suitable prime-to-$p$ Dirichlet characters, in order to obtain a better-looking interpolation formula.) The first main result of this paper is the following interpolation formula, which shows that our $p$-adic $L$-function does interpolate the automorphic Rankin--Selberg $L$-functions:

\begin{letthm}[Theorem \ref{thm:interpolation formula}]
 There exists a (necessarily unique) meromorphic function $\mathcal{L} \in \operatorname{Frac}\left( \mathbb{T}_{\mathfrak{a}}\right) \hat{\otimes}\mathcal{R}^{\operatorname{univ}}$, such that for all modular points $\left( f, \theta^t \left( g \right) \right) \in \operatorname{Spec}\left( \mathbb{T}_{\mathfrak{a}}\right) \times \operatorname{Spec}\left( \mathcal{R}^{\operatorname{univ}}\right)$ with $f$ having weight $k \geq 2$, $g$ having weight $ l \geq 1$, and $t$ an integer satisfying $0 \leq t \leq k-l-1$, we have
 \begin{align*}
  &\mathcal{L}\left( f,\theta^{t}\left( g\right) \right) \\
  =&     i^{k-l-2t}2^{1-k}N^{2+2t-k+l}\psi^{-1}_{p}\left( N\right) \epsilon_{p}\left( -N \right)    \Lambda^{[pN]}(f, g^{*}, l+t) \\
  & \cdot  \lambda_{p^{b}}( g)\left(\frac{p^{t+1}}{\alpha} \right)^b\frac{P_{p}\left( g, p^{t}  \alpha^{-1}\right)}{P_{p}\left(g^{*}, \alpha p^{-l-t}\right)\mathcal{E}^{\operatorname{ad}}\left( f_{\alpha}\right)\langle f, f \rangle_{N_{1}p^{a}}},
 \end{align*}
 where
 \begin{itemize}
  \item  $a$ (resp. $b$) is the power of $p$ dividing the level at which $f$ (resp. $g$) is new, and $\lambda_{p^{b}}\left( g\right)$ is the Atkin--Lehner pseudo-eigenvalue of $g$ at $p^{b}$ ,
  \item $\psi_{p}$ (resp. $\epsilon_{p}$) is the $p$-part character of $f$ (resp. $g$),
  \item $\alpha$ is the unique ordinary root of the Hecke polynomial of $f$ at $p$,
  \item $P_{p}\left( g,X\right)$ is the polynomial satisfying $P_{p}\left(g,X \right)^{-1}=\sum\limits_{u \geq 0} X^{u}a_{p^{u}}\left( g\right)$,
  \item $\mathcal{E}^{\operatorname{ad}}\left( f_{\alpha}\right)$ (the adjoint Euler factor) is defined by
  \[
   \mathcal{E}^{\operatorname{ad}}\left( f_{\alpha}\right)=
   \begin{cases}
    \left( 1-\frac{\beta}{\alpha} \right)\left( 1-\frac{\beta}{p\alpha} \right)
    &\text{if $a = 0$},\\[1.5ex]
    \left(\tfrac{-1}{p-1}\right) \cdot \left(\tfrac{\psi\left( p\right)p^{k-2}}{\alpha^{2}}\right) &\text{if $a = 1$ and $\psi_{p}=\operatorname{id}$}, \\[1.5ex]
    \left( \tfrac{p^{1-a}}{p-1}\right)  \left( \tfrac{\psi\left( p\right)p^{k-2}}{\alpha^{2}} \right)^{a}  G\left( \psi_{p}\right) &\text{otherwise,}
   \end{cases}
  \]
  where $\psi$ is the prime-to-$p$ character of $f$, and $G\left( \psi_{p}\right)$ is the Gauss sum of $\psi_{p}$,
  \item $ \Lambda^{[pN]}(f, g^{*}, l+t)$ is the completed automorphic Rankin--Selberg $L$-function attached to $f\left( \tau\right)$ and $g^{*}\left( \tau\right)\coloneqq \overline{g\left( -\overline{\tau}\right)}$, with Euler factors at primes dividing $pN$ removed (c.f.~\cite[\S 4.1]{LLZ14}; also defined in Section \ref{sec:42}).
 \end{itemize}
\end{letthm}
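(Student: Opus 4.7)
The plan is to define $\mathcal{L} \coloneqq \lambda_{\mathcal{F}}\bigl(e^{\ord}(\mathcal{G}^{[pN]} \cdot \boldsymbol{F}^{[p]})\bigr)$ exactly as in the outline, and to verify the interpolation formula by specialisation at each modular point. First I would check that this is well-defined in the stated ring. The product $\mathcal{G}^{[pN]} \cdot \boldsymbol{F}^{[p]}$ sits in an appropriate space of $p$-adic modular forms with coefficients in $\mathbb{T}_{\mathfrak{a}} \hat{\otimes} \mathcal{R}^{\operatorname{univ}}$; because both factors are $p$-depleted, Hida's ordinary projector $e^{\ord}$ converges on this space and produces a genuine ordinary $p$-adic family. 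Pairing against Hida's dual functional $\lambda_{\mathcal{F}}$, which is defined after inverting the congruence module of $\mathcal{F}$, then yields an element of $\operatorname{Frac}(\mathbb{T}_{\mathfrak{a}}) \hat{\otimes} \mathcal{R}^{\operatorname{univ}}$.

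For the interpolation, fix a modular point $(f, \theta^{t}(g))$ of the type specified. Under the Hecke-algebra identifications sketched above, the universal objects specialise as follows: $\mathcal{F}$ to the ordinary $p$-stabilisation $f_{\alpha}$ of $f$; $\mathcal{G}^{[pN]}$ to the $pN$-depletion $g^{[pN]}$; $\boldsymbol{F}^{[p]}$ to the classical $p$-depleted Eisenstein series of weight $k-l-2t$ and character from \cite[\S5.3]{LLZ14}; and $\lambda_{\mathcal{F}}$ to the Hida-dual functional $\lambda_{f_{\alpha}}$. Thus the claim reduces to evaluating
\[
\lambda_{f_{\alpha}}\bigl(e^{\ord}(g^{[pN]} \cdot F^{[p]})\bigr).
\]
A standard computation realises $\lambda_{f_{\alpha}}$, up to the factor $\mathcal{E}^{\operatorname{ad}}(f_{\alpha})\langle f, f \rangle_{N_{1}p^{a}}$ in the denominator, as a Petersson pairing against an Atkin--Lehner translate of $f_{\alpha}$. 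This converts the quantity of interest into a classical Rankin--Selberg integral. Shimura's unfolding then identifies this integral with the depleted Dirichlet series $D^{[pN]}(f, g, l+t)$, and comparing with the completed automorphic $L$-value $\Lambda^{[pN]}(f, g^{*}, l+t)$ produces the $L$-factor together with the Euler-factor ratio $P_{p}(g, p^{t}\alpha^{-1})/P_{p}(g^{*}, \alpha p^{-l-t})$ that accounts for passing from $f$ to $f_{\alpha}$ and from $g$ to $g^{[pN]}$. Uniqueness of $\mathcal{L}$ follows because modular points are Zariski-dense in $\operatorname{Spec}(\mathbb{T}_{\mathfrak{a}}) \times \operatorname{Spec}(\mathcal{R}^{\operatorname{univ}})$.

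The hardest part will be correctly handling Atkin--Lehner theory at $p^{b}$ when $g$ is non-ordinary, in particular when its local representation at $p$ is supercuspidal or ramified principal series --- cases beyond the scope of previous constructions. In those regimes $g$ is not a $U_{p}$-eigenform, so the pseudo-eigenvalue $\lambda_{p^{b}}(g)$ intervenes, and the relation $g^{*}(\tau) = \overline{g(-\overline{\tau})}$ involves the Atkin--Lehner normalisation that produces the factor $(p^{t+1}/\alpha)^{b}$. One must carefully verify that the depletion $\mathcal{G}^{[pN]}$, the ordinary projector, and the Atkin--Lehner involution at $p^{b}$ combine to give precisely the stated constants, and that this remains compatible with the analytic variation over $\operatorname{Spec}(\mathcal{R}^{\operatorname{univ}})$ near supercuspidal points. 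The remaining task --- tracking the elementary normalisation factors $i^{k-l-2t}2^{1-k}N^{2+2t-k+l}\psi_{p}^{-1}(N)\epsilon_{p}(-N)$ and the twists by the prime-to-$p$ Dirichlet characters introduced to streamline the formula --- is technically involved but essentially bookkeeping once the central Atkin--Lehner calculation is in hand.
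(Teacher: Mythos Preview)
Your overall architecture is right, but you have misidentified where the factor $\lambda_{p^{b}}(g)\bigl(\tfrac{p^{t+1}}{\alpha}\bigr)^{b}\,\tfrac{P_{p}(g,\,p^{t}\alpha^{-1})}{P_{p}(g^{*},\,\alpha p^{-l-t})}$ comes from, and this hides the genuine content of the proof.

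The Petersson realisation of $\lambda_{f_{\alpha}^{c}}$ pairs against $f_{\beta}\mid W_{N_{1}p^{2r}}$, so after taking adjoints the operator $W_{p^{2r}}$ lands on \emph{both} $g_{\epsilon^{-1}}^{[pN]}$ and the Eisenstein series. The $p$-depleted Eisenstein family $F^{[p]}_{m,\chi}$ does \emph{not} unfold to a recognisable Dirichlet series; one must first replace it (harmlessly, since the difference is in $\ker U_{p}$) by an ordinary series $\tilde{F}$, and then compute $\tilde{F}\mid W_{p^{2r}}$ explicitly in terms of Kato's Eisenstein series $F_{0,1/Np^{2r}}$ (see Lemmas~\ref{linearfunctinalvanish}--\ref{lem:reducelevel}). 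Only against \emph{this} Eisenstein series does \cite[Theorem 4.2.3]{LLZ14} give the completed $L$-value, and it comes multiplied by a ``correction term'' $C$ which is a Dirichlet series in the $p$-power Fourier coefficients of $g_{\epsilon^{-1}}^{[p]}\mid W_{p^{2r}}$ against those of $(f_{\beta}\mid W_{N_{1}})^{*}$.

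The key lemma (Lemma~\ref{keylemma}) evaluates $\sum_{u\ge 0} X^{u}a_{p^{u}}\bigl(h^{[p]}\mid W_{p^{b+c}}\bigr)$ for a newform $h$ by comparing the functional equations of $\Lambda(h,s)$ and $\Lambda(h^{[p]},s)$; this is what produces $\lambda_{p^{b}}(h)(p^{l-1}X)^{c}\tfrac{P_{p}(h,\,p^{-l}X^{-1})}{P_{p}(h^{*},\,X)}$. Substituting $X=\alpha p^{-l-t}$ then yields exactly the stated ratio together with the pseudo-eigenvalue and the power of $p/\alpha$. So the $P_{p}$-ratio does not arise from ``passing from $f$ to $f_{\alpha}$ and from $g$ to $g^{[pN]}$'' as you wrote, but from the Atkin--Lehner action on the \emph{depleted} form $g^{[p]}$ computed via the one-variable functional equation of $g$. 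Your sketch as written would not find this factor, and the naive unfolding you describe would instead produce an expression involving $a_{p^{u}}(g^{[pN]})=0$ for $u\ge 1$, i.e.\ no $p$-Euler information at all.
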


 Note that our $p$-adic $L$-function does not have a separate ``cyclotomic'' variable; this is not necessary, since (unlike the more familiar ordinary deformation rings) the universal deformation ring $\mathcal{R}^{\operatorname{univ}}$ already incorporates information about twisting by characters: it can be written as the product of a smaller ring $\mathcal{R}^{\text{tame det}}$ (of relative dimension 2 over $\zpa$) parametrising deformations with tamely-ramified determinant, and a copy of the cyclotomic Iwasawa algebra. The differential operator $\theta^t$ appearing in the interpolation formula corresponds to a cyclotomic twist of the Galois representation.

 The proof of the interpolation formula is rather intricate, although the overall strategy is similar to analogous calculations in \cite{loe18} and elsewhere; it will be given at the end of this paper. The key idea is to replace the $p$-depleted Eisenstein series (which varies in $p$-adic families) to another class of Eisenstein series, defined in \cite[\S 3]{kat04}, for which the Rankin--Selberg integral can be computed explicitly. The relation between these two families of Eisenstein series involves the Atkin--Lehner operator at $p$, and the term $\lambda_{p^{b}}(g)\left(\frac{p^{t+1}}{\alpha} \right)^b\frac{P_{p}\left( g,\, p^{t} \alpha^{-1}\right)}{P_{p}\left(g^{*},\, \alpha p^{-l-t}\right)}$ emerges from the effect of this operator on the $q$-expansion coefficients of $f$ and $g$ at $p$; see Lemmas \ref{keylemma} and \ref{lem:correctionterm}.

  \subsubsection*{Compatibility with conjectures}

   In \cite[Conjecture 2.8]{loe21}, the second author made a general conjecture predicting how $p$-adic $L$-functions for families of global Galois representations should behave under specialisations, building on earlier works of \cite{CPR89} and \cite{fk06}. This includes a prediction for the shape of the factors relating the $p$-adic $L$-function to the complex one, involving the Euler factors of the Panchishkin subrepresentation of $V$ and its dual.

When $f$ and $g$ are both crystalline, we verify that our interpolation formula does have the correct Euler factor at $p$, as predicted by the above conjecture. (In the more general case when $a, b > 0$, the factor $\lambda_{p^{b}}( g)\left(\frac{p^{t+1}}{\alpha} \right)^b$ can also be interpreted in these terms -- it is essentially the local $\varepsilon$-factor of this representation -- but we shall not pursue this interpretation here.)

\subsubsection*{Functional equation}
The final goal in this paper is to derive a functional equation for our $p$-adic $L$-function $\mathcal{L}$.
It takes the following form:
\begin{letthm}[Theorem \ref{thm:p-adicfunctionalequation}] We have
 \begin{align*}
  \mathcal{L}= N^{2\left( \boldsymbol{k}_{1}-\boldsymbol{k}_{2} -1 \right)} \gamma\left(\rho_{\mathcal{A}} \right) \mathcal{L}',
 \end{align*}
 where $\boldsymbol{k}_{1}$ and $\boldsymbol{k}_{2}$ are weight characters of $\rho^{\ord}$ and $\rho^{\operatorname{univ}}$ respectively, $\mathcal{L}'$ is a suitably defined dual $p$-adic $L$-function, and $\gamma\left( \rho_{\mathcal{A}}\right)= \prod\limits_{\nu \mid N}\gamma_{\nu}\left( \rho_{\mathcal{A}}\right)$ is the product of local universal $\gamma$-factors at bad primes. \rev{Here, $\rho_{\mathcal{A}}$ is the four-dimensional universal Galois representation that is the focus of our study, constructed as the tensor product of the universal ordinary deformation $\rho^{\ord}$
  and the Tate dual of the universal deformation $(\rho^{\operatorname{univ}})^*(1)$. It is defined precisely in Section 3.3.}
\end{letthm}

 It will be proved as follows: we first consider the dense subset $\Sigma''\left( \mathcal{V},\mathcal{V}^{+}\right) $ of $\operatorname{Spec}\left( \mathbb{T}_{\mathfrak{a}}\right)\times \operatorname{Spec}\left( \mathcal{R}^{\operatorname{univ}}\right)$ consisting of crystalline points (for both $f$ and $g$, of weights as in the preceding Theorem), and construct a $p$-adic $L$-function $\mathcal{L}'$ which approximates the ``dual" of the original $p$-adic $L$-function $\mathcal{L}$ \rev{(on the dense set of crystalline points $\Sigma''\left( \mathcal{V},\mathcal{V}^{+}\right)$, the specializations of $\mathcal{L}'$ are shown to be related via a precise interpolation formula to the complex $L$-values of the dual Rankin-Selberg convolution.)} We then examine the interpolation formula given in Theorem \ref{thm:interpolation formula} (and also the dual interpolation formula given in Theorem \ref{thm:dualinterpolationformula}) and re-interpret the ratio $\frac{P_{p}\left( g, p^{t} \alpha^{-1}\right)}{P_{p}\left(g^{*}, \alpha p^{-l-t}\right)}$ along with the invisible factor $P_{p}\left( f,g^{*},l+t\right)$ as the modified Euler factor at $p$ of \rev{the four-dimensional Rankin-Selberg Galois representation $\rho_{\mathcal{A}}$ evaluated at that point}. This calculation also enables us to deduce the the modified Euler factors at $p$ of $\mathcal{L}$ and $\mathcal{L}'$ are equal. Thus to relate the interpolation formula of $\mathcal{L}$ to that of $\mathcal{L}'$, we need to appeal to a $N$-depleted version of the complex functional equation. This is discussed in Section \ref{sec:complexfunctionalequation}, where we prove that they are related via the product of $\gamma$-factors at bad primes. This motivates our seek for a universal $\gamma$-factor $\gamma_{\nu}\left( \rho_{\mathcal{A}}\right)$ at each bad prime $\nu$, which interpolates the classical ones.

This turns out to be possible, and the key input is the universal $\gamma$-factor $\gamma_{\nu}\left( \rho_{\mathcal{A}},X\right)$ Helm and Moss attached to the representation $\rho_{\mathcal{A}}$ in \cite{hm17}. More precisely, Helm and Moss constructed an element $\gamma_{\nu}\left( \rho_{\mathcal{A}},X\right)$ in the localised ring $\mathcal{T}^{-1}\mathcal{A}[X,X^{-1}]$, for $X$ an indeterminate, $\mathcal{T}$ the set of Laurent polynomials with leading and trailing coefficients units in $\mathcal{A}$, and proved in Theorem 1.1 of \textit{loc. cit.} that this universal $\gamma$-factor does interpolate the classical ones upon specialising. \rev{Here, 
$\mathcal{A}:=\mathbb{T}_{\mathfrak{a}}\hat{\otimes}\mathcal{R}^{\operatorname{univ}}$
  is the half-ordinary Rankin-Selberg universal deformation ring that serves as the base ring for the $p$-adic $L$-function (see Section 3.3).} We have to be careful, however, as in our applications we will take $X$ to be $1$. This may cause trouble, as upon specialising $X$ to $1$, the image of the set $\mathcal{T}$ may contain zero-divisors, in which case the image of $\mathcal{T}^{-1}\mathcal{A}[X,X^{-1}]$ would just be the zero ring, and our definition of $\gamma_{\nu}\left( \rho_{\mathcal{A}}\right)\coloneqq \gamma_{\nu}\left( \rho_{\mathcal{A}},1\right)$ will be meaningless. We will prove the well-definedness of $\gamma_{\nu}\left( \rho_{\mathcal{A}}\right)$ by factorising the ring $\mathcal{A}$ into three pieces $\mathcal{A} = \mathbb{T}_{\mathfrak{a}} \hat{\otimes}_{\mathcal{O}} \mathcal{R}^{\operatorname{tame \ det}} \hat{\otimes}_{\mathcal{O}} \Lambda =: \widetilde{\mathcal{A}}\hat{\otimes}\Lambda$ (where $\mathcal{R}^{\operatorname{tame \ det}}$ is the quotient of $\mathcal{R}^{\operatorname{univ}}$ parameterising deformations of $\overline{\rho}_{2}$ with tame determinants) and relate the universal $\gamma$-factor $\gamma_{\nu}\left( \rho_{\mathcal{A}},X\right)$ to the universal $\gamma$-factor $\gamma_{\nu}\left( \rho_{\widetilde{\mathcal{A}}},X\right)$ of the associated universal deformation $\rho_{\widetilde{\mathcal{A}}}:G_{S} \rightarrow \operatorname{GL}_{4}\left( \mathbb{T}_{\mathfrak{a}} \hat{\otimes}_{\mathcal{O}} \mathcal{R}^{\operatorname{tame \ det}}\right)$.

\begin{revised}
\subsubsection*{Comparison with functional equations using $\varepsilon$-factors}
A key challenge in formulating a $p$-adic functional equation is the interpolation of local constants. This reflects the two possible formulations of the functional equation for classical complex $L$-functions: one can either work with \emph{primitive} $L$-functions (with the optimal local factors at all finite primes, including those where the representation is ramified), so that the functional equation is expressed in terms of $\varepsilon$-factors; or one can work with the \emph{depleted} $L$-functions (omitting the local factors at the ramified finite places), so that the functional equation is expressed in terms of Tate's $\gamma$-factors. The relation between the two formulations is given by the formula
\[\tag{\dag}
\gamma_{\nu}\left(f,g,s,\vartheta \right)= \varepsilon_{\nu}\left(f,g,s, \vartheta \right)\frac{L_{\nu}\left( f^{*},g^{*},k+l-1-s\right)}{L_{\nu}\left(f,g,s \right)}.
\]

The approach used by Chen and Hsieh in \cite{CH18} is to construct $p$-adic $L$-functions interpolating primitive $L$-functions. With this approach, in order to formulate a $p$-adic functional equation, it is necessary to construct a $p$-adic interpolation of $\varepsilon$-factors. This is possible for \textit{primitive} Hida families, where, due to the rigidity of automorphic types result proved in Lemma 2.14 of \cite{FO12}, the rank of the space of inertia invariants is constant across all classical specializations, for each fixed prime dividing the level. However, in the more general context of a universal deformation space, this rigidity is lost; it is possible to have non-trivial intersections between components that are generically Steinberg at some prime $\ell \ne p$ and those which are generically principal-series at $\ell$, and at such intersection points the rank of the inertia invariants is not locally constant. This prevents the direct interpolation of standard $\varepsilon$-factors: while it is possible to interpolate Deligne's \textit{modified} $\varepsilon_0$-factors as in \cite{yas09}, the relation between these and the $\varepsilon$-factors involves a term depending on the inertia invariants, and this does not extend to an analytic function on the deformation space.

Instead, we work with depleted $L$-functions, which are better-behaved under congruences than primitive ones (as has also been observed in a number of other recent works, such as \cite{RSV23} and \cite{Del24}). Hence the universal $\gamma$-factor of Helm and Moss \cite{hm17} provides the most natural framework for our construction, showing that the ratio of three terms on the right-hand side of ($\dag$) interpolates over the deformation space, whereas the three individual terms in general do not.
\end{revised}

\subsection{Acknowledgements} This paper forms part of the first author's Warwick PhD thesis, under the supervision of the second author. He would like to thank the second author for his superb guidance and insightful conversations. He also gratefully acknowledges Olivier Fouquet's careful reading of his thesis and stimulating conversations in his viva, which has led to many improvements on both contents and presentation of this paper. During the outbreak of COVID-19, the first author stayed in BICMR of Peking University, and he would like to thank his hosts, Ruochuan Liu and Liang Xiao, for their hospitality. We thank Pak-Hin Lee, Patrick Allen, James Newton and Ju-Feng Wu for helpful communications. We are also grateful to the referee for his/her very careful reading of our manuscript and valuable suggestions.

\subsection{Notations and conventions}

Throughout this paper we fix a prime $p > 2$. We also fix an embedding $\iota_{p}: \overline{\QQ} \hookrightarrow \overline{\QQ_{p}}$, and the symbol $\iota_{p}\left( \cdot \right)$ is often omitted if there is no danger of confusion. We fix a $p$-adic norm $\mid \cdot \mid_{p}$ on $\overline{\QQ_{p}}$ such that $|p|=p^{-1}$.

Let the action of $\operatorname{GL}_{2}^{+}(\QQ)$ on a weight $k$ modular form $f$ be given by
\[ \left( f \mid_{k} \begin{psmallmatrix}
a & b \\ c & d
\end{psmallmatrix}\right)\left( \tau\right) \coloneqq \left( ad-bc\right)^{k-1}\left( c \tau+d\right)^{-k}f\left( \frac{a\tau +b}{c \tau  +d}\right).\]
For integers $Q,N \geq 1$, we write $Q \ediv N$ to signify that $Q \mid N$ and $\left( Q, \frac{N}{Q} \right)= 1$. In this case, define the Atkin--Lehner operator $W_Q$ at $Q$ by any matrix of the form $\begin{psmallmatrix}
Qx & y \\ NQz & Qw
\end{psmallmatrix}$, with $x,y,z,w$ integers such that $Qxw-Nyz=1$, $Qx=w=1 \pmod{N}$, $y=-1 \pmod{Q}$ and $Nz=1 \pmod{Q}$. This is the convention taken in \cite[\S 2.5]{klz17} \footnote{Note this differs from the original Atkin--Lehner operator considered in \cite{al78}. More precisely, if we let $W_{Q}^{\operatorname{AL}}$ denote the Atkin--Lehner operator defined in \cite{al78} (i.e.\ $y \equiv 1 \pmod{Q}$ and $x \equiv 1 \pmod{N/Q}$), then $W_{Q}^{\operatorname{AL}}=W_{Q}\langle -1\rangle_{Q}\langle Q^{-1}\rangle_{N/Q}$.}, where several useful properties of these operators were also discussed. For $\Gamma = \Gamma_{1}\left( N\right)$ or $\Gamma=\Gamma_{1}\left( R\left( S\right)\right)\coloneqq \Gamma_{1}\left( R\right) \cap \Gamma_{0}\left( RS\right)$, let $N_{\operatorname{GL}_{2}^{+}\left( \QQ\right)}\left( \Gamma\right)$ denote the normaliser group of $\Gamma$ in $\operatorname{GL}^{+}_{2}\left( \QQ\right)$, and let $\mathcal{G}\coloneqq N_{\operatorname{GL}_{2}^{+}\left( \QQ\right)}\left( \Gamma\right)/ \Gamma$. For $Q \mid\mid N$ and $x \in \left( \mathbb{Z}/Q\mathbb{Z}\right)^{\times}$, let $\langle x\rangle_{Q}$ denote the class in $\mathcal{G}$ of any representative $x \in \operatorname{SL}_{2}\left( \mathbb{Z}\right)$ of the form $x=\begin{psmallmatrix}
    a & b \\
    Nc & d
\end{psmallmatrix}$ with $d \equiv x \pmod{Q}$ and $d \equiv 1 \pmod{N/Q}$.

Let $f\in S_{k}\left( N, \varepsilon \right)$ be a newform of level $N$ and character $\varepsilon$. For $Q \| N$, write $\varepsilon\coloneqq  \varepsilon_{Q} \cdot \varepsilon_{N/Q}$ for characters $\varepsilon_{Q}$ and $\varepsilon_{N/Q}$ modulo $Q$ and $N/Q$, respectively. There exists a unique newform $f \otimes \varepsilon_{Q}^{-1}  \in S_{k}\left( \Gamma_{1}\left( N\right),\overline{\varepsilon_{Q}}\varepsilon_{N/Q}\right)$ and a scalar $\lambda_{Q}\left( f\right) \in \mathbb{C}$ such that $f \mid_{k} W_{Q} = \lambda_{Q}\left( f\right) \cdot f \otimes \varepsilon_{Q}^{-1}$. The scalar $\lambda_{Q}\left( f\right)$ is called the \textit{Atkin--Lehner pseudo-eigenvalue} of $f$ at $Q$. Hereafter the "tensor product" notation $f \otimes \chi$ for an arbitrary character $\chi$ modulo $M$ always means the newform twist, while the subscript $f_{\chi}$ means the ``naive" twist $f_{\chi}\coloneqq  \sum \chi\left( n\right) a_{n}\left( f\right)q^{n}$.
Note that under our conventions, the Atkin--Lehner pseudo-eigenvalues are strictly multiplicative, i.e. if $Q=p^{b}$ is a prime power such that $Q \mathrel{\|} N$, then $\lambda_{N}\left( f\right)=\lambda_{Q}\left( f\right)\lambda_{N/Q}\left( f\right)$.

 We define the $p$-depletion of $f$ to be $f^{[p]}\coloneqq  \sum\limits_{\substack{n \geq 0 \\ p \nmid n}} a_{n}q^{n}$. More generally, for $M=p_{1}^{i_{1}} \ldots p_{r}^{i_{r}}$, we define the $M$-depletion of $f$ to be $f^{[M]}\coloneqq \left(\left( f^{[p_{1}]}\right)^{[p_{2}]}\hdots \right)^{[p_{r}]}=\sum\limits_{\substack{n \geq 0 \\ n \notin \operatorname{Supp}\left( M\right)}}a_{n}q^{n}$. Note that for $Q$ with $\left( Q,M\right)=1$, the Atkin--Lehner operator $W_{Q}$ commutes with the $M$-depletion process.

Finally, we fix our convention for Galois representations as that of \cite[\S 3]{loe21}. For a prime $\nu$, $\operatorname{Frob}_{\nu}$ denotes an arithmetic Frobenius.

\section{Deformation rings and Hecke algebras}

Fix a prime $p > 2$, and let $S_{1}$ and $S_{2}$ be two finite set of primes containing $p$ and $\infty$. Let $L$ be a finite extension of $\QQ_{p}$, $\mathcal{O}$ its ring of integers, with residue field $\mathbb{F}$.
We fix two residual representations $\overline{\rho}_{1}: G_{S_{1}} \to \operatorname{GL}_{2}\left( \mathbb{F}\right)$ and $\overline{\rho}_{2}:G_{S_{2}} \to \operatorname{GL}_{2}\left( \mathbb{F}\right)$. We assume $\overline{\rho}_{i}$ (for $i = 1,2$) satisfies the following properties: \begin{itemize}
    \item $\overline{\rho}_{i}$ is absolutely irreducible.
    \item $\overline{\rho}_{i}$ is odd.
    \item (Taylor--Wiles condition) The restriction of $\overline{\rho}_{i}$ to the absolute Galois group of
    $\QQ(\zeta_p)$ is irreducible.
     \item if $\left.\bar{\rho}_{i}\right|_{G_{\QQ_p}}$ is not irreducible, with semisimplification $\epsilon_{1, p} \oplus \epsilon_{2, p}$, then we have $\epsilon_{1, p} / \epsilon_{2, p} \notin\left\{1, \overline{\varepsilon}_{\operatorname{cyc}}^{\pm 1}\right\}$, where $\overline{\varepsilon}_{\operatorname{cyc}}$ is the $\bmod$ $ p$ cyclotomic character.
\end{itemize}

In addition, we assume $\overline{\rho}_{1}$ is ordinary at $p$ (but we make no such assumption on $\overline{\rho}_{2}$). Then by \cite{maz89}, the functor representing deformations (resp. ordinary deformations\footnote{More precisely: we fix a \emph{choice} of a local unramified subrepresentation in $\overline{\rho}_{1}$, and we study deformations with a local unramified subrepresentation lifting this choice. It can happen that $\overline{\rho}_{1}$ is unramified at $p$, but our running assumptions imply that in this case it is isomorphic to the direct sum of two \emph{distinct} unramified characters. So we always have either 1 or 2 choices for the unramified subrepresentation.}) of $\overline{\rho}_{2}$ (resp. $\overline{\rho}_{1}$) is representable, and we denote by $\left( \rho^{\operatorname{univ}}, \mathcal{R}^{\operatorname{univ}} \right)$ (resp. $\left( \rho^{\ord}, \mathcal{R}^{\ord}\right) $) the universal pair of this functor.

As in \cite{boe03,epw05,gou90}, we can attach to $\rho^{\ord}$ and $\rho^{\operatorname{univ}}$ ``tame conductors". More precisely, for a residual representation $\overline{\rho}:G_{S} \rightarrow \operatorname{GL}_{2}\left( \mathbb{F}\right)$, let $N(\overline{\rho})\coloneqq  \prod\limits_{l \neq p}l^{n(l,\overline{\rho})}$ be the tame conductor of $\overline{\rho}$, as defined in \cite[\S 3]{gou90}. We define the\textit{ universal tame conductor} $N_{S}$ of $\overline{\rho}$ by $N_{S} \coloneqq \prod\limits_{l \neq p} l^{n_{S}(l)}$, where the exponents are determined by the following rule:
\begin{enumerate}
 \item If $\bar{\rho}$ is unramified, then $n_{S}(l)=2$ if $l \in S$ and is $0$ otherwise.
 \item If $\bar{\rho}^{I_\ell}$ is 1-dimensional (where $I_\ell$ is the inertia group), then $n_{S}(l)=n(l,\bar{\rho})+1$.
 \item If $\bar{\rho}^{I_\ell} = 0$, then $n_{S}(l)=n(l,\bar{\rho})$.
\end{enumerate}

For $\overline{\rho}=\overline{\rho}_{1},\overline{\rho}_{2}$, define $N_{1}'\coloneqq  N_{S_{1}}$ and $N_{2}\coloneqq N_{S_{2}}$. Let $S\left( N_{2}, \mathcal{O} \right)$ denote the space of $p$-adic modular forms over $\mathcal{O}$ of tame level $N_{2}$, \rev{obtained as the direct limit $\varinjlim\limits_{r}S_2(N_2p^r,\mathcal{O})$,} and let $\mathbb{T}_{N_2}$ denote the subspace of $\operatorname{End}_{\mathcal{O}}\left(S\left( N_{2},\mathcal{O}\right) \right)$ generated by Hecke operators $T_{l}$ for all $l \nmid pN_{2}$ and the diamond operators \rev{(naturally arising as the inverse limit of Hecke algebras acting on spaces of finite-level modular forms by duality)}. Since by assumption $\overline{\rho}_{2}$ is odd, it arises from modular forms, and the duality between modular forms and Hecke algebras determines a maximal ideal $\mathfrak{n}$ of $\mathbb{T}_{N_2}$, corresponding to $\overline{\rho}_{2}$. Let $\mathbb{T}_{\mathfrak{n}}$ denote the completion of $\mathbb{T}_{N_2}$ at $\mathfrak{n}$. Then we have the following ``$\mathcal{R}=\mathbb{T}''$ theorem:

\begin{thm} \label{r=t}
   Under the running assumptions on $\overline{\rho}_{2}$, we have $\mathcal{R}^{\operatorname{univ}} \cong \mathbb{T}_\mathfrak{n}$.
\end{thm}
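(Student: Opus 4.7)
The plan is to follow the $R = T$ paradigm, in the non-ordinary form established by Emerton in \cite{emer06} via $p$-adically completed cohomology, verifying that our set-up meets its hypotheses.

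First, I would construct a surjective $\mathcal{O}$-algebra homomorphism $\pi \colon \mathcal{R}^{\operatorname{univ}} \twoheadrightarrow \mathbb{T}_{\mathfrak{n}}$. For each classical point of $\operatorname{Spec}\bigl(\mathbb{T}_{\mathfrak{n}}[1/p]\bigr)$ one has an associated two-dimensional Galois representation (Eichler--Shimura--Deligne), and these interpolate: using Carayol's lemma together with the absolute irreducibility of $\overline{\rho}_{2}$, the pseudorepresentation they define over $\mathbb{T}_{\mathfrak{n}}$ lifts uniquely to a genuine continuous representation $\rho_{\mathfrak{n}} \colon G_{S_{2}} \to \operatorname{GL}_{2}(\mathbb{T}_{\mathfrak{n}})$ deforming $\overline{\rho}_{2}$. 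Universality then produces $\pi$, and surjectivity is automatic because $\operatorname{tr}\bigl(\pi(\rho^{\operatorname{univ}}(\operatorname{Frob}_{\ell}))\bigr) = T_{\ell}$ for all $\ell \nmid p N_{2}$, and the $T_{\ell}$ together with the diamond operators (also in the image, via the determinant of $\rho^{\operatorname{univ}}$) topologically generate $\mathbb{T}_{\mathfrak{n}}$.

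The substance lies in showing that $\pi$ is injective, and here I would invoke Emerton's theorem directly. The four running hypotheses on $\overline{\rho}_{2}$ are exactly what is needed as input: absolute irreducibility makes the deformation functor representable in the usual sense; oddness ensures modularity, so that $\mathbb{T}_{\mathfrak{n}}$ is non-zero; the Taylor--Wiles condition guarantees that the patching machinery behind Emerton's proof applies; and the restriction $\epsilon_{1,p}/\epsilon_{2,p} \notin \{1, \overline{\varepsilon}_{\operatorname{cyc}}^{\pm 1}\}$ excludes the residually reducible local types at $p$ for which local-global compatibility is most delicate. Emerton's theorem then states that the localised completed cohomology of the tower of modular curves is a faithful module over $\mathcal{R}^{\operatorname{univ}}$, from which injectivity of $\pi$ follows.

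The principal obstacle is the bookkeeping of tame levels rather than the deep input itself: one must check that the specific algebra $\mathbb{T}_{\mathfrak{n}}$, acting on $S(N_{2}, \mathcal{O})$ of tame level exactly the universal tame conductor $N_{2}$, is the correct Hecke algebra to compare with $\mathcal{R}^{\operatorname{univ}}$. For this I would appeal to the conductor analysis of \cite{boe03,epw05,gou90}: the three-case recipe defining $N_{S}$, with the jump to exponent $2$ at unramified primes of $S$ and the shift to $n(l, \overline{\rho}) + 1$ in the Steinberg case $\dim \overline{\rho}^{I_{\ell}} = 1$, is designed precisely so that $N_{2}$ accommodates every lift of $\overline{\rho}_{2}$ allowed by the deformation problem, including those becoming supercuspidal at primes of $S_{2} \setminus \{p, \infty\}$. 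This guarantees that no closed points of $\operatorname{Spec}\, \mathcal{R}^{\operatorname{univ}}$ are missed in the passage to $\mathbb{T}_{\mathfrak{n}}$, completing the $R = T$ identification.
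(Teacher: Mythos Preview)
Your proposal is essentially correct and consistent with the paper's treatment: the paper does not give its own proof of this theorem but simply cites it as a known result, attributing it to B\"ockle \cite{boe03} in a special case (when $\overline{\rho}_{2}|_{G_{\QQ_p}}$ has a twist that is either ordinary, or irreducible and flat) and to Emerton \cite{emer06} in the generality stated. Your sketch of the surjection via Carayol's lemma, the injectivity via Emerton's faithfulness theorem on completed cohomology, and the tame-level bookkeeping following \cite{boe03,gou90,epw05} is a faithful summary of how those references establish the result.
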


\begin{proof}
\rev{
    Under a somewhat stronger hypothesis on $\overline{\rho}_2$ (assuming $\overline{\rho}_{2} \mid_{ G_{\QQ_{p}}}$ has a twist that is either ordinary, or irreducible and flat), this is Theorem 3.9 of \cite{boe03}. In the general setting above, the proof is given in Section 7.3 of \cite{emer06}. (Emerton does not formulate his results in precisely this form, but he describes a generalization of \cite[Corollary 3.8]{boe03} to this setting, and the argument deducing Theorem 3.9 of \textit{op.cit.} from this extends without change, giving the equality $\mathcal{R}^{\operatorname{univ}}=\mathbb{T}_{\mathfrak{n}}$ under Emerton's hypotheses\footnote{We are grateful to Patrick Allen for his explanations on this.}.)}
\end{proof}

Let $e$ denote Hida's ordinary projector, and
let $\mathbb{T}^{\ord}_{N_{1}'}$ denote the subalgebra of $\operatorname{End}_{\mathcal{O}}\left(eS\left(N_{1}', \mathcal{O} \right) \right)$ generated by the Hecke operators $T_{l}$ for $l \nmid pN_{1}'$, $U_{p}$ and the diamond operators. As before, the residual representation $\overline{\rho}_{1}$ determines a maximal ideal $\mathfrak{m}$ of $\mathbb{T}^{\ord}_{N_{1}'}$, and we denote by $\mathbb{T}^{\ord}_{\mathfrak{m}}$ the corresponding completion. Analogously, we have

\begin{thm}[\cite{boe03}]
    Under the running assumptions on $\overline{\rho}_{1}$, we have $\mathcal{R}^{\ord} \cong \mathbb{T}^{\ord}_{\mathfrak{m}}$.
\end{thm}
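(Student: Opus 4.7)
The plan is to apply the Taylor--Wiles--Kisin patching method, adapted to the ordinary setting as in \cite{boe03}.

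First I would construct a natural surjection $\phi: \mathcal{R}^{\ord} \twoheadrightarrow \mathbb{T}^{\ord}_{\mathfrak{m}}$. Hida's theory attaches to $\mathbb{T}^{\ord}_{\mathfrak{m}}$ a two-dimensional Galois pseudo-representation of $G_{S_{1}}$ whose trace sends $\operatorname{Frob}_{\ell}$ to $T_{\ell}$ for all $\ell \nmid pN_{1}'$. Absolute irreducibility of $\overline{\rho}_{1}$ lifts this pseudo-representation to a genuine representation $\rho_{\mathbb{T}}: G_{S_{1}} \to \operatorname{GL}_{2}(\mathbb{T}^{\ord}_{\mathfrak{m}})$ deforming $\overline{\rho}_{1}$, and the local analysis at $p$ (via Hida's ordinary filtration, together with the fixed unramified choice explained in the footnote above) shows that $\rho_{\mathbb{T}}$ is an ordinary deformation in the sense used to define $\mathcal{R}^{\ord}$. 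Universality then produces $\phi$, and surjectivity is automatic since the $T_{\ell}$ generate $\mathbb{T}^{\ord}_{\mathfrak{m}}$ topologically.

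For injectivity I would run a Taylor--Wiles patching argument. The Taylor--Wiles hypothesis, i.e.\ irreducibility of $\overline{\rho}_{1}|_{G_{\QQ(\zeta_{p})}}$, supplies for every $n \geq 1$ a finite set $Q_{n}$ of auxiliary primes, of fixed cardinality $r$, such that the dual Selmer group for the ordinary deformation problem augmented at $Q_{n}$ vanishes. A standard Galois-cohomology computation then presents the associated ordinary deformation ring $\mathcal{R}^{\ord}_{Q_{n}}$ as a quotient of a formal power series ring in $r$ variables over an Iwasawa base of predictable Krull dimension (the ordinary condition at $p$ contributing one weight variable). On the Hecke side, one enriches by $Q_{n}$-level structure inside the ordinary tower and uses Hida's control and freeness results to see that $\mathbb{T}^{\ord}_{\mathfrak{m}_{Q_{n}}}$ is finite free over the weight algebra, of rank bounded uniformly in $n$; the hypothesis on the semisimplification of $\overline{\rho}_{1}|_{G_{\QQ_{p}}}$ excluding the ratios $1$ and $\overline{\varepsilon}_{\operatorname{cyc}}^{\pm 1}$ is precisely what rules out the multiplicity-one failures that could spoil this freeness. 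A Kisin-style ultrafilter patching then produces compatible surjections of a single auxiliary power series ring onto patched versions $R_{\infty}$ and $T_{\infty}$, and a dimension count forces both to be complete intersections of the same dimension. Descending back to $\mathcal{R}^{\ord}$ and $\mathbb{T}^{\ord}_{\mathfrak{m}}$ then upgrades $\phi$ to an isomorphism.

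The main obstacle is the Hecke-side bookkeeping: producing Taylor--Wiles primes that are genuinely compatible with the ordinary local condition at $p$, and establishing the freeness of the localized ordinary cohomology modules at each augmented level $Q_{n}$. The running hypotheses in the excerpt on $\overline{\rho}_{1}$ are tailored precisely so that both of these inputs are available uniformly, as in Böckle's original treatment; once they are in hand, the dimension comparison and the patching are formal, and the argument differs from the standard (non-ordinary) Wiles--Taylor--Wiles--Kisin scheme only in the presence of the extra Hida weight variable that has to be tracked through the patching.
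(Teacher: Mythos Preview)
The paper does not give its own proof of this statement: it is stated as a theorem of B\"ockle \cite{boe03} and simply cited, with no argument supplied. So there is nothing in the present paper to compare your proposal against.

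That said, your sketch is a reasonable high-level outline of the Taylor--Wiles--Kisin strategy underlying B\"ockle's result, and the two-step structure (surjection from universality, injectivity from patching) is correct. A couple of points of caution if you intend this as more than a pointer to the literature: the freeness of the Hecke module in the ordinary tower is not quite a consequence of the hypothesis excluding $\epsilon_{1,p}/\epsilon_{2,p} \in \{1, \overline{\varepsilon}_{\operatorname{cyc}}^{\pm 1}\}$ alone---B\"ockle's argument also needs control over the local deformation rings at $p$ and at the bad primes in $S_1$, and the precise hypotheses in \cite{boe03} are somewhat more restrictive than the running assumptions listed here (indeed, the full-strength result under the assumptions as stated really combines B\"ockle with later work such as \cite{emer06}). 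So if you were to flesh this out into an actual proof you would need to track those local conditions carefully; but for the purposes of the present paper, a citation is all that is required.
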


\begin{defi}[classical and nearly classical points]
Let $f$ be a normalised eigenform (resp. normalised ordinary eigenform) of tame level $N_2$ (resp. $N_{1}'$) . If $\rho_f$ is a deformation of $\overline{\rho}_{2}$ (resp. $\overline{\rho}_{1}$), then it determines a $\overline{\QQ_p}$-point of $\operatorname{Spec}(\mathcal{R^{\operatorname{univ}}})$ (resp. $\operatorname{Spec}\left( \mathcal{R}^{\ord}\right)$). Such points are called \emph{classical points} of $\mathcal{R}^{\operatorname{univ}}$ (resp. $\mathcal{R}^{\ord}$).

Moreover, if the corresponding Galois representation of a $\overline{\QQ_p}$-point of $\operatorname{Spec}(\mathcal{R}^{\operatorname{univ}})$ is of the form $\rho_f \otimes \varepsilon_{\operatorname{cyc}}^{-t}$ for a normalised eigenform $f$ and an integer $t$, then it is called a \emph{nearly-classical point}.

\end{defi}

As explained in \cite[\S 3]{loe21}, for $t \geq 0$, the Galois representation $\rho_{g} \otimes \varepsilon_{\operatorname{cyc}}^{-t}$ corresponds to the $p$-adic modular form $\theta^{t}\left( g\right)$, where $\theta = q \dfrac{d}{dq}$ is the Serre-Tate differential operator.

Since the prime-to-$p$ conductor of $\rho^{\mathrm{univ}}$ is bounded by $N_2$, we can find characters $\epsilon : \left(\mathbb{Z}/ N_{2}\mathbb{Z}\right)^{\times} \to (\mathcal{R}^{\mathrm{univ}})^\times$, and $\mathbf{k}_2 : \zpm \to (\mathcal{R}^{\mathrm{univ}})^\times$, such that
\[ \operatorname{det} \rho^{\operatorname{univ}}=\epsilon \cdot \varepsilon_{\operatorname{cyc}}^{1-\boldsymbol{k}_{2}}, \]
where as usual $\varepsilon_{\operatorname{cyc}}$ is the $p$-adic cyclotomic character. We call $\boldsymbol{k}_{2}$ the \textit{universal weight-character} of $\rho^{\operatorname{univ}}$. Completely analogously, we can write $\operatorname{det}\left( \rho^{\ord}\right)$ as $\psi' \cdot \varepsilon_{\operatorname{cyc}}^{1-\boldsymbol{k}'_{1}}$ for a character $\psi' : \left(\mathbb{Z}/ N_{1}'\mathbb{Z}\right)^{\times} \to (\mathbb{T}^{\ord}_{\mathfrak{m}})^\times$ and a universal weight-character $\boldsymbol{k}'_{1} : \zpm \to (\mathbb{T}^{\ord}_{\mathfrak{m}})^\times$.

\section{The \texorpdfstring{$p$}{p}-adic \texorpdfstring{$L$}{L}-function}

\subsection{The Hida family associated to $\overline{\rho}_{1}$}

 As above, we identify the universal ordinary deformation ring $\mathcal{R}^{\ord}$ with the restricted Hecke algebra $\mathbb{T}^{\ord}_{\mathfrak{m}}$. The ring $\mathbb{T}^{\ord}_{\mathfrak{m}}$ is Noetherian, and hence has finitely many minimal prime ideals. We choose a minimal prime $\mathfrak{a}$, corresponding to an irreducible component of $\operatorname{Spec}\mathbb{T}^{\ord}_{\mathfrak{m}}$ (a ``branch'' in the terminology of \cite{klz17}). This determines a primitive Hida family of some level $N_1 \mid N_1'$, as follows.

 For a prime-to-$p$ positive integer $M$, we let $\widetilde{\mathbb{T}}^{\ord}_M \supseteq \mathbb{T}^{\ord}_M$ denote the full Hecke algebra acting on the space of ordinary $p$-adic modular forms over $\mathcal{O}$ of tame level $M$ (including the Hecke operators $U_\ell$ for $\ell \mid M$). Following \cite[\S 2]{epw05}, we define $\widetilde{\mathbb{T}}^{\mathrm{new}}_{M}$ (the ``new quotient'') to be the quotient of $\widetilde{\mathbb{T}}^{\ord}_{M}$ that acts faithfully on the space of newforms at level $M$ (see Theorem 2.1.3 of \textit{loc.cit.} for properties of this algebra).

 As in \cite[\S 2.5]{epw05}, we can find a unique divisor $N\left( \mathfrak{a}\right)$ of $N'_{1}$, and a unique minimal prime ideal $\mathfrak{a}'$ of $\widetilde{\mathbb{T}}^{\operatorname{new}}_{N\left( \mathfrak{a}\right)}$, such that there exists a natural embedding map of local domains $\mathbb{T}^{\ord}_\mathfrak{m}/\mathfrak{a} \hookrightarrow \widetilde{\mathbb{T}}^{\operatorname{new}}_{N\left( \mathfrak{a}\right)}/\mathfrak{a}'$, and the representation $\rho\left( \mathfrak{a}\right):G_{\QQ}\to \operatorname{GL}_{2}\left( \mathbb{T}^{\ord}_{\mathfrak{m}}/\mathfrak{a}\right)$ induced from $\rho^{\ord} :G_{\QQ} \to \operatorname{GL}_{2}\left( \mathbb{T}^{\ord}_{\mathfrak{m}}\right)$ via the natural map has tame conductor $N\left( \mathfrak{a}\right)$ (see Proposition 2.5.2, Remark 2.5.4 and Corollary 2.5.5 of \textit{op. cit.}). We now define $N_{1}\coloneqq N\left( \mathfrak{a}\right)$.

Let $\mathbb{T}_{\mathfrak{a}}$ denote the integral closure of the domain $\widetilde{\mathbb{T}}^{\operatorname{new}}_{N_{1}}/\mathfrak{a}'$. So $\mathbb{T}_{\mathfrak{a}}$ is a normal domain, finite flat over $\Lambda$ (see \cite[Proposition 2.2.3]{epw05} for a proof). As in \cite[\S 2.7]{epw05}, we can attach to $\rho\left( \mathfrak{a}\right)$ a $\Lambda$-adic form $\mathcal{F}\left( q\right)$ by defining:
$$
\mathcal{F}\left( q\right)\coloneqq  \sum\limits_{n \geq 1} \left(T\left( n\right) \bmod{ \mathfrak{a}'} \right)q^{n} \in \mathbb{T}_{\mathfrak{a}}[[q]] .
$$ At each classical point, $\mathcal{F}\left( q\right)$ specialises to an ordinary $p$-stabilised newform of tame level $N_{1}$, and we may view $\mathcal{F}\left( q\right)$ as a primitive Hida family.

Composing the maps $\mathbf{k}_1'$ and $\psi'$ above with the natural map $\mathbb{T}^{\ord}_{\mathfrak{m}} \to \mathbb{T}_\mathfrak{a}$, we obtain a weight-character $\mathbf{k}_1$ and prime-to-$p$ character $\psi$ valued in $\mathbb{T}_\mathfrak{a}^\times$. These are the weight-character and the prime-to-$p$ nebentype of the primitive Hida family $\mathcal{F}$.

Consider the twisted $\Lambda$-adic form:
$
\mathcal{F}_{\psi^{-1}}\left( q\right)= \sum\limits_{n \geq 1}\psi^{-1}\left( n\right)a_{n}\left( \mathcal{F}\right)q^{n}.
$
Let $\mathcal{F}^{c}$ denote the primitive Hida family associated to this depleted $\Lambda$-adic form; its Fourier coefficients are given explicitly in \cite[p. 18]{Hsi17}. In particular, for every classical point $Q$, if $\mathcal{F}$ specialises to a $p$-stabilised newform $f$ modulo $Q$, then $\mathcal{F}^{c}$ specialises to $f^{c}$ modulo $Q$, where $f^{c}$ denotes the unique $p$-stabilised newform corresponding to $f \mid W_{N_{1}}$.

To this primitive Hida family, \cite[\S 7.7]{klz17} attached a fractional ideal $I_{\mathfrak{a}} \subset \operatorname{Frac} \mathbb{T}_{\mathfrak{a}}$, \footnote{Hsieh proved in \cite[p. 18]{Hsi17} that the congruence ideal attached to $\mathcal{F}^{c}$ is the same as that of $\mathcal{F}$.} and a unique linear functional $\lambda_{\mathcal{F}^{c}}' : e\mathcal{S}(N_{1}, \Lambda) \otimes_{\Lambda} \mathbb{T}_{\mathfrak{a}} \rightarrow I_{\mathfrak{a}}^{-1}$, characterised by mapping $\mathcal{F}^{c}$ to $1$.

Let $N$ be a positive integer divisible by $N_{1}$ and $N_{2}$, and with the same prime factors as $N_1 N_2$. It will be convenient also to assume that for each prime $\ell$ which divides $N_1$ but not $N_2$, we have $\ell^2 \mid N$. Let $\operatorname{Tr}_{N_{1}}^{N}$ denote the trace map from level $N$ forms to level $N_{1}$ forms (see e.g.\ \cite[\S 1]{Hid88} for a definition). Then we define the ``level $N$" linear functional $\lambda_{\mathcal{F}}$ as
$$
\lambda_{\mathcal{F}^{c}}\coloneqq  \lambda'_{\mathcal{F}^{c}}  \circ \operatorname{Tr}_{N_{1}}^{N}.
$$



\subsection{The universal deformation family associated to $\overline{\rho}_{2}$} \label{sec:universaleigenform}

In this Subsection, we define the (depleted) universal eigenform associated to the universal deformation $\rho^{\operatorname{univ}}$, using reciprocal local Euler factors at good primes. This will be called \textit{the universal deformation family} associated to $\overline{\rho}_{2}$.

\begin{defi}
 Let $S(pN)$ denote the set of integers which are not coprime to $pN$. Then we define the universal $pN$-depleted eigenform associated to $\overline{\rho}_{2}$ by
 \begin{equation*}
  \mathcal{G}^{[pN]} \coloneqq  \sum\limits_{n \notin S(pN)} t_n q^n,
 \end{equation*}
 where the sequence $\{t_n \}_{n \notin S(pN)}$ is determined by the following identity of formal Dirichlet series:
 \begin{equation*}
  \sum\limits_{n \notin S(pN)}t_n n^{-s} = \prod\limits_{\ell \nmid pN} \operatorname{det}(1-\ell^{-s}\rho^{\operatorname{univ}}\left( \operatorname{Frob}^{-1}_{\ell})\right)^{-1}.
 \end{equation*}
\end{defi}

\begin{prop}[Gouvea, Loeffler]
 The $q$-expansion $\mathcal{G}^{[pN]}$ is a $p$-adic modular form, with coefficients in $\mathcal{R}^{\operatorname{univ}}$, of tame level $N$, weight character $\boldsymbol{k}_{2}$ and prime-to-$p$ character $\epsilon$, and is a normalised eigenform for all Hecke operators.
\end{prop}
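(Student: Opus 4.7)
The plan is to produce $\mathcal{G}^{[pN]}$ from the $R = T$ isomorphism of Theorem \ref{r=t} combined with the duality between $p$-adic modular forms of tame level $N_{2}$ and the Hecke algebra $\mathbb{T}_{N_2}$ studied in \cite{gou88}. Under this duality, an $\mathcal{O}$-algebra homomorphism $\phi \colon \mathbb{T}_{N_2} \to A$ corresponds to an $A$-valued normalised system of Hecke eigenvalues, with associated $q$-expansion $\sum_{(n,\, pN_{2}) = 1} \phi(T_{n})\, q^{n}$.

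First, I would apply this duality to the natural surjection $\mathbb{T}_{N_2} \twoheadrightarrow \mathbb{T}_{\mathfrak{n}} \cong \mathcal{R}^{\operatorname{univ}}$ provided by Theorem \ref{r=t}, obtaining an $\mathcal{R}^{\operatorname{univ}}$-valued $p$-adic eigenform $\mathcal{G}_{0}$ of tame level $N_{2}$. By the characterising property of the universal deformation (Frobenius traces being pinned down to match Hecke eigenvalues at each classical point, which form a Zariski-dense locus in $\operatorname{Spec}(\mathcal{R}^{\operatorname{univ}})$), the image of $T_{\ell}$ in $\mathcal{R}^{\operatorname{univ}}$ equals $\operatorname{tr}\bigl(\rho^{\operatorname{univ}}(\operatorname{Frob}_{\ell}^{-1})\bigr)$ for each $\ell \nmid pN_{2}$. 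Combined with the Hecke recursion $T_{\ell^{r+1}} = T_{\ell}\, T_{\ell^{r}} - \ell\,\langle \ell \rangle\, T_{\ell^{r-1}}$ and the identity $\det \rho^{\operatorname{univ}} = \epsilon \cdot \varepsilon_{\operatorname{cyc}}^{1-\boldsymbol{k}_{2}}$, this shows by induction on $r$ that the coefficients $\phi(T_{\ell^{r}})$ match those extracted from the local Euler factor $\det\bigl(1 - \ell^{-s}\rho^{\operatorname{univ}}(\operatorname{Frob}_{\ell}^{-1})\bigr)^{-1}$ at every $\ell \nmid pN_{2}$; multiplicativity across coprime indices then identifies $\mathcal{G}_0$ with the truncation of $\mathcal{G}^{[pN]}$ to the set $\{ n : (n, pN_2) = 1\}$.

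Next, I would view $\mathcal{G}_{0}$ as a $p$-adic form of tame level $N$ via the inclusion $S(N_{2}, \mathcal{O}) \hookrightarrow S(N, \mathcal{O})$ (possible since $N_{2} \mid N$ and $\operatorname{rad}(N) = \operatorname{rad}(N_{1} N_{2})$), and apply the depletion operator $\prod_{\ell \mid N,\, \ell \nmid N_{2}}(1 - V_{\ell} U_{\ell})$ to kill coefficients at indices divisible by any prime of $N/N_{2}$. The output is precisely $\mathcal{G}^{[pN]}$. It remains an eigenform for all Hecke operators, since this depletion operator commutes with each $T_{n}$ with $(n, pN) = 1$ (so $t_{n}$ survives as the eigenvalue) while every $U_{\ell}$ with $\ell \mid pN$ acts as zero after depletion. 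The prime-to-$p$ nebentype equals $\epsilon$, inherited from $\mathcal{G}_{0}$ along $(\mathbb{Z}/N)^{\times} \twoheadrightarrow (\mathbb{Z}/N_{2})^{\times}$, and the weight character equals $\boldsymbol{k}_{2}$ because the $\zpm$-action via diamond operators is recovered from $\det \rho^{\operatorname{univ}}$.

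The main subtlety is justifying the identification $T_{\ell} \leftrightarrow \operatorname{tr}\bigl(\rho^{\operatorname{univ}}(\operatorname{Frob}_{\ell}^{-1})\bigr)$ that underlies the $R = T$ isomorphism: although standard, it relies on the precise way the $R = T$ isomorphism of Theorem \ref{r=t} is set up, namely that $\rho^{\operatorname{univ}}$ is built so that at each classical point its Frobenius traces specialise to the Hecke eigenvalues of the corresponding newform, and this matching extends by density and Chebotarev. Once this point is granted, the remainder is an essentially formal matching of Dirichlet series against $q$-expansions.
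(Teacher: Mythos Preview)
Your overall strategy matches the paper's, but there is a genuine gap in the first step. The duality of \cite{gou88} that you invoke is between the \emph{full} Hecke algebra $\widetilde{\mathbb{T}}_{N_2}$ (which includes the operators $U_\ell$ for $\ell \mid N_2$) and the space of $p$-adic modular forms of tame level $N_2$, via the pairing $(T,f) \mapsto a_1(Tf)$. A homomorphism from the restricted algebra $\mathbb{T}_{N_2}$ alone does not determine a $q$-expansion: the coefficients $a_n$ for $n$ divisible by some $\ell \mid N_2$ are unspecified. In particular, it is not automatic that setting these coefficients to zero produces a $p$-adic modular form; your sentence ``with associated $q$-expansion $\sum_{(n,\, pN_{2}) = 1} \phi(T_{n})\, q^{n}$'' is precisely the statement to be proved, not a consequence of duality.

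The paper fills this gap by invoking an analogue (in the non-ordinary setting) of \cite[Prop.~2.4.2]{epw05}: there exists a maximal ideal $\tilde{\mathfrak{n}}$ of $\widetilde{\mathbb{T}}_{N_2}$ lying above $\mathfrak{n}$ such that the natural map $\mathbb{T}_{\mathfrak{n}} \to (\widetilde{\mathbb{T}}_{N_2})_{\tilde{\mathfrak{n}}}$ is an isomorphism and every $U_\ell$ with $\ell \mid N_2$ maps to $0$ in the completion. This extends your homomorphism $\mathbb{T}_{N_2} \to \mathcal{R}^{\operatorname{univ}}$ to one from $\widetilde{\mathbb{T}}_{N_2}$ sending each bad $U_\ell$ to $0$, after which the full duality does yield the $N_2$-depleted form $\mathcal{G}^{[pN_2]}$. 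This step is not merely bookkeeping: it relies on the specific definition of the universal tame conductor $N_2 = N_{S_2}$ with its carefully chosen exponents, and would fail for an arbitrary multiple of the tame conductor of $\bar{\rho}_2$. Once $\mathcal{G}^{[pN_2]}$ is obtained, your depletion argument to pass from tame level $N_2$ to $N$ is correct and matches the paper's; note only that $\ell$-depletion raises the tame level by a factor of $\ell^2$, which is why the running assumption $\ell^2 \mid N$ for primes $\ell \mid N_1$ with $\ell \nmid N_2$ is needed.
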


\begin{proof}
 We first give the proof supposing $N = N_2$. Let $\widetilde{\mathbb{T}}_{N_2}$ denote the \emph{full} Hecke algebra acting on $p$-adic modular forms of tame level $N_2$ (including the Hecke operators at the bad primes). As in \cite[Prop.~2.4.2]{epw05} in the ordinary case, we can find a maximal ideal $\tilde{\mathfrak{n}}$ of $\widetilde{\mathbb{T}}_{N_2}$ lying above $\mathfrak{n}$, with the property that the natural map
 $\mathbb{T}_{\mathfrak{n}} \to (\widetilde{\mathbb{T}}_{N_2})_{\tilde{\mathfrak{n}}}$
  is an isomorphism, and the $U_\ell$ for $\ell \mid N_2$ map to 0 in $\left(\widetilde{\mathbb{T}}_{N_2}\right)_{\tilde{\mathfrak{n}}}$.
   Arguing as in Theorem 3.10 of \cite{loe21}, using the duality between $p$-adic modular forms and Hecke algebras proved in \cite[\S3]{gou88}, we deduce that the formal power series $\mathcal{G}^{[pN_2]}$ is a $p$-adic modular form as required.

 If $N \ne N_2$, then $\mathcal{G}^{[pN]}$ is given by $\ell$-depleting $\mathcal{G}^{[pN_2]}$ for each prime dividing $N_1$, but not $N_2$. Since the $\ell$-depletion raises the tame level by a factor of $\ell^2$, and $\ell^2 \mid N$, the form $\mathcal{G}^{[pN]}$ does indeed have tame level $N$.
\end{proof}

At each nearly classical point $\theta^{t}\left( g\right)$, the universal eigenform $\mathcal{G}^{[pN]}$ specialises to $\theta^{t}\left( g^{[pN]}\right)$. Define $$
\mathcal{G}^{[pN]}_{\epsilon^{-1}}\coloneqq  \sum\limits_{n \notin S\left( pN\right)} \epsilon\left( n\right)^{-1}t_{n}q^{n}.
$$

\subsection{Construction of the $p$-adic $L$-function} \label{sec:53}
We will be interested in Galois representations for Rankin--Selberg convolutions of modular forms. To that end, we define $\mathcal{A}= \mathbb{T}_{\mathfrak{a}} \hat{\otimes}_{\mathcal{O}} \mathcal{R}^{\operatorname{univ}}$, and call it the \textit{half-ordinary Rankin--Selberg universal deformation ring}. It has relative dimension four over $\mathcal{O}$. The universal weight characters $\boldsymbol{k}_{1}:\zpm \rightarrow \mathbb{T}_{\mathfrak{a}}^{\times}$ and $\boldsymbol{k}_{2}:\zpm \rightarrow \left(\mathcal{R}^{\operatorname{univ}} \right)^{\times}$ naturally extend to characters over $\mathcal{A}$, and we may view $\mathcal{G}^{[pN]}_{\epsilon^{-1}}$ as a $p$-adic eigenform with coefficients in $\mathcal{A}$ and weight character $\boldsymbol{k}_{2}$, by base extension.

Consider the representation $\rho_{\mathcal{A}} \coloneqq  \rho^{\ord} \otimes \left( \rho^{\operatorname{univ}}\right)^{*}\left( 1\right)$, where as usual, $\left( \ \cdot \ \right)^{*}\left( 1\right)$ denotes the Tate dual representation. It is a rank four Galois representation from the Galois group $G_{S}$ to $\operatorname{GL}_{4}\left( \mathcal{A}\right)$, and we denote the underlying module by $\mathcal{V}$. We call $\rho_{\mathcal{A}}$ (slightly abusively, also $\mathcal{V}$) the \textit{half-ordinary Rankin--Selberg universal deformation}. Since $\rho^{\ord}$ is ordinary, $\rho^{\ord}\mid_{G_{\QQ_{p}}}$ has a one-dimensional unramified subrepresentation, which we denote by $\left( \rho^{\ord}\right)^{+}$. As in \cite{loe21}, we define $\mathcal{V}^{+}\coloneqq \left( \rho^{\ord}\right)^{+} \otimes \left( \rho^{\operatorname{univ}}\right)^{*}\left( 1\right)$, which in \textit{loc. cit. } was called a \textit{Panchishkin subfamily} of $\mathcal{V}$. It is a rank two local subrepresentation of $\mathcal{V}$.

Define $\Sigma\left( \mathcal{V},\mathcal{V}^{+}\right) \subset \operatorname{Spec}\left( \mathbb{T}_{\mathfrak{a}}\right) \times \operatorname{Spec}\left( \mathcal{R}^{\operatorname{univ}}\right) $ to be the subset consisting of all modular points of the form $\left( f, \theta^t \left( g \right) \right)$, such that the weight $k$ of $f$ satisfies $k \geq 2$, the weight $l$ of $g$ satisfies $l \geq 1$, and $t$ is an integer between $0$ and  $k-l-1$. This will be the range of interpolation for our $p$-adic $L$-function.



 Let $\zeta_N$ be a primitive $N$-th root of unity, and we identify it with its image $\iota_{p}(\zeta_N)$ in $\mathbb{C}_p$. Enlarging $\mathcal{O}$ if necessary, we assume $\zeta_N \in \mathcal{O}$.

\begin{defi}
For a character $\boldsymbol{k}:\mathbb{Z}^{\times}_{p} \rightarrow \mathcal{A}^{\times}$, let $\boldsymbol{F^{[p]}_{k}} \coloneqq  \sum\limits_{p \nmid n} q^{n} (\sum\limits_{d \mid n} \left( n/d \right)^{\boldsymbol{k}-1}(\zeta_N^d + (-1)^{\boldsymbol{k}}\zeta_N^{-d} ) ) \in \mathcal{A}[[q]]
$ denote the $p$-adic family of Eisenstein series of weight character $\boldsymbol{k}$. We define the $p$-adic Rankin--Selberg $L$-function $\mathcal{L}$ by \begin{equation*}
    \mathcal{L} \coloneqq  \lambda_{\mathcal{F}^{c}}\left( e \left(  \mathcal{G}_{\epsilon^{-1}}^{[pN]} \cdot \boldsymbol{F^{[p]}_{k_{1}-k_{2}}}  \right) \right)  \in I_{\mathfrak{a}}^{-1} \otimes_{\mathbb{T}_{\mathfrak{a}}}\mathcal{A}.
\end{equation*}

\end{defi}


Given a pair $\left( f, \theta^{t}g \right) \in \Sigma\left( \mathcal{V},\mathcal{V}^{+}\right)$, we denote by $\alpha$  the unique ordinary  root of the Hecke polynomial of $f$ at $p$, and we denote by $f_{\alpha}$ the $p$-stabilisation of $f$ with $U_{p}$ eigenvalue $\alpha$.
Let $f_{\alpha}^{c}$ denote the unique ordinary $p$-stabilised newform associated to $f_{\alpha}$ satisfying $a_{n}\left( f_{\alpha}^{c}\right)= \psi\left( n\right)^{-1}a_{n}\left( f\right) $ for all $n$ with $\left(n,N \right)=1$, and
let $\lambda_{f_{\alpha}^{c}}$ be the specialisation of $\lambda_{\mathcal{F}^{c}}$ at $f$,
characterised by mapping $f_{\alpha}^{c}$ to $1$. Then by definition, \begin{equation*}
    \mathcal{L}\left( f,\theta^{t}\left( g\right) \right)= \lambda_{f_{\alpha}^{c}}\left( \theta^{t}\left(g_{\epsilon^{-1}}^{[pN]} \right) \cdot F^{[p]}_{k-l-2t,\psi_{p}\epsilon_{p}^{-1} } \right),
\end{equation*} where $\psi_{p}$ (resp.$\epsilon_{p}$) is the $p$-part of the character of $f$ (resp. $g$).

\subsection{The interpolation formula} \label{sec:42}

 For $\ell \neq p$ a prime, let \begin{equation*}
    P_{\ell}(f,g,X) = \operatorname{det}\left( 1-X\operatorname{Frob}^{-1}_{\ell} \mid \left( \rho_{f} \otimes \rho_{g} \right)^{I_{\ell}} \right)
\end{equation*} denote the local Euler factor at $\ell$ as defined in \cite[Definition 4.1.1]{LLZ14} (see also \cite{Jac72} for a definition using automorphic terms), and define $$
P_{p}\left( f,g,X\right)= \operatorname{det}\left( 1- X\varphi:\boldsymbol{D}_{\operatorname{cris}}\left( \rho_{f} \otimes \rho_{g}\right)\right),
$$ where $\boldsymbol{D}_{\operatorname{cris}}$ denotes the crystalline functor, and $\varphi$ is the crystalline Frobenius (introduced by Fontaine \cite{fon94}, see also \cite{Faltings1987,Scholl1990, Tsuji1999padicC,Saito1997ModularFA} for the theory applied to modular forms).  We define the following Rankin--Selberg $L$-functions
\begin{defi} \label{defi:complexrankinselberglfunction}
    \begin{align*}
    L_{\left( pN\right)}\left( f,g,s\right)&= \prod\limits_{\ell \nmid pN} P_{\ell}\left( f,g, \ell^{-s}\right)^{-1}, \\
    L\left( f,g,s\right)&= \prod\limits_{\ell } P_{\ell}\left( f,g, \ell^{-s}\right)^{-1}.
\end{align*}
Let $l$ be the weight of $g$, and define $\Gamma_{\mathbb{C}}\coloneqq \left( 2\pi\right)^{-s}\Gamma\left( s\right)$. Then we define the completed $L$-functions
  \begin{align*}
    \Lambda^{[pN]}\left( f,g,s\right)&= \Gamma_{\mathbb{C}}(s)\Gamma_{\mathbb{C}}(s-l+1)L_{\left( pN\right)}\left( f,g,s\right),\\
    \Lambda\left( f,g,s\right)&= \Gamma_{\mathbb{C}}(s)\Gamma_{\mathbb{C}}(s-l+1)L\left( f,g,s\right).
\end{align*}
\end{defi}

\begin{thm} \label{thm:interpolation formula}
For all $\left( f, \theta^t \left( g \right) \right) \in \Sigma\left( \mathcal{V},\mathcal{V}^{+}\right)$, we have
\begin{multline*}
    \mathcal{L}\left( f,\theta^{t}\left( g\right) \right) = i^{k-l-2t}2^{1-k}N^{2+2t-k+l}\psi^{-1}_{p}\left( N\right) \epsilon_{p}\left( -N \right)    \Lambda^{[pN]}(f, g^{*}, l+t) \\
    \times \lambda_{p^{b}}( g)\left(\frac{p^{t+1}}{\alpha} \right)^b\frac{P_{p}\left( g, p^{t} \alpha^{-1}\right)}{P_{p}\left(g^{*}, \alpha p^{-l-t}\right)\mathcal{E}^{\operatorname{ad}}\left( f_{\alpha}\right)\langle f, f \rangle_{N_{1}p^{a}}}, 
\end{multline*} \rev{where $a$ (resp. $b$) is the power of $p$ dividing the level at which $f$ (resp. $g$) is new, $\lambda_{p^{b}}\left( g\right)$ is the Atkin--Lehner pseudo-eigenvalue of $g$ at $p^{b}$, $\psi_{p}$ (resp. $\epsilon_{p}$) is the $p$-part character of $f$ (resp. $g$),$\alpha$ is the unique ordinary root of the Hecke polynomial of $f$ at $p$,$P_{p}\left( g,X\right)$ is the polynomial satisfying $P_{p}\left(g,X \right)^{-1}=\sum\limits_{u \geq 0} X^{u}a_{p^{u}}\left( g\right)$, and $\mathcal{E}^{\operatorname{ad}}\left( f_{\alpha}\right)$ is the adjoint Euler factor defined in Theorem A.} 
\end{thm}

The proof of this Theorem will be given at the end of this paper. We remark here that this $p$-adic $L$-function is necessarily unique, due to the density of modular points in the universal deformation spaces.
\subsection{The dual construction}
We start working towards the functional equation. As outlined in the introduction, we will define a dual $p$-adic $L$-function $\mathcal{L}'$, which interpolates $\Lambda^{[pN]}( f^{*}, g, k-1-t)$ upon specialising at $\left( f, \theta^t \left( g \right) \right)$, so that we can use the complex functional equation of $\Lambda( f^{*}, g, k-1-t)$ to derive a candidate for the $p$-adic functional equation using the interpolation formulae for $\mathcal{L}$ and $\mathcal{L}'$. \footnote{\rev{Recall the conjugate modular form $f^*$ of $f$ is defined by $f^*(\tau):=\overline{f(-\overline{\tau})}$.}}

\rev{Our strategy is to first establish the functional equation on a dense subset of points. To this end}, we will define a further subset $\Sigma''\left( \mathcal{V},\mathcal{V}^{+}\right) \subset \Sigma\left( \mathcal{V},\mathcal{V}^{+}\right) \subset \operatorname{Spec}\left( \mathbb{T}_{\mathfrak{a}}\right)\times \operatorname{Spec}\left( \mathcal{R}^{\operatorname{univ}}\right)$ consisting of pairs $(f,\theta^t(g))$ where both forms are crystalline. \rev{On this subset, we will show our dual $p$-adic $L$-function $\mathcal{L}'$ interpolates $\Lambda^{[pN]}( f^{*}, g, k-1-t)$ when evaluated at $(f,\theta^t(g))$.} We then proceed to prove a functional equation for $\mathcal{L}$ and $\mathcal{L}'$ on this subset, and extend to the whole weight space by a density argument.


To that end, let us define the subset $\Sigma''\left( \mathcal{V},\mathcal{V}^{+}\right)$ to be $\{\left( f,\theta^{t}g \right) \in \Sigma\left( \mathcal{V},\mathcal{V}^{+}\right): f,g \text{ crystalline} \}$. We will see this is a dense subset of $\operatorname{Spec}\left( \mathbb{T}_{\mathfrak{a}}\right)\times \operatorname{Spec}\left( \mathcal{R}^{\operatorname{univ}}\right)$ in Lemma \ref{lem:densitycrystalline} when we prove the functional equation.

To construct the dual $p$-adic $L$-function $\mathcal{L}'$, we apply the same construction to the "dual family". In light of the above definition of $\Sigma''$, to obtain the dual family on $\Sigma''\left( \mathcal{V,\mathcal{V}^{+}}\right)$, we could just twist $\mathcal{F}^{c}$ and $\mathcal{G}^{[pN]}_{\epsilon^{-1}}$ by their prime-to-$p$ characters (and thus obtain the original universal eigenforms $\mathcal{F}$ and $\mathcal{G}^{[pN]}$). We then define the "dual" $p$-adic family of Eisenstein series $\boldsymbol{E}^{[p]}_{\boldsymbol{k}}\left( q\right)\coloneqq \sum\limits_{p\nmid n}q^{n}\left( \sum\limits_{d\mid n}d^{\boldsymbol{k}-1}\left( \zeta_{N}^{d} + \left( -1\right)^{\boldsymbol{k}}\zeta_{N}^{-d}\right) \right)$, as in Definition 5.3.1 of \cite{LLZ14}.

The dual $p$-adic $L$-function $\mathcal{L}'$ is then defined by $\mathcal{L}'=\lambda_{\mathcal{F}}\left( e\left(\mathcal{G}^{[pN]} \cdot \boldsymbol{E}^{[p]}_{\boldsymbol{k}_{1}-\boldsymbol{k}_{2}} \right)\right)$. By definition, we have $\mathcal{L}'\left( f,\theta^{t}\left( g\right)\right)=\lambda_{f_{\alpha}}\left(\theta^{t}\left( g^{[pN]}\right)\cdot E^{[p]}_{k-l-2t}  \right)$, for all $\left( f,\theta^{t}\left( g\right) \right) \in \Sigma''\left( \mathcal{V},\mathcal{V}^{+}\right)$.

\begin{thm} \label{thm:dualinterpolationformula}
    For all $\left( f, \theta^{t}\left( g\right)\right) \in \Sigma''\left( \mathcal{V},\mathcal{V}^{+}\right)$ (i.e.\ crystalline $f,g$),
   \begin{align*}
     &\mathcal{L}'\left( f,\theta^{t}\left( g\right) \right) \\
    &=i^{k-l-2t}2^{1-k}N^{k-l-2t}   \Lambda^{[pN]}( f^{*}, g, k-1-t) \\
    & \cdot\frac{P_{p}\left( g^{*}, p^{-l-t} \beta\right)}{P_{p}\left(g, \beta^{-1} p^{t}\right)\mathcal{E}\left( f^{*}_{\beta}\right) \mathcal{E}^*\left( f^{*}_{\beta}\right)\langle f^{*}, f^{*} \rangle_{N_{1}}},
\end{align*} where $\beta$ is the non-ordinary root of the Hecke polynomial of $f$ at $p$.
\end{thm}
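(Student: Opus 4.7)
The strategy is to mirror the computation that will be carried out for Theorem \ref{thm:interpolation formula} at the end of the paper, substituting the dual objects $\mathcal{F}$, $\mathcal{G}^{[pN]}$, $\boldsymbol{E}^{[p]}$ for $\mathcal{F}^{c}$, $\mathcal{G}^{[pN]}_{\epsilon^{-1}}$, $\boldsymbol{F}^{[p]}$ at every step. Since the desired identity only needs to hold on $\Sigma''(\mathcal{V},\mathcal{V}^+)$, it suffices to verify it pointwise for a fixed $(f,\theta^t(g))$. Specialising the definition of $\mathcal{L}'$ reduces the problem to computing
\[
    \lambda_{f_\alpha}\!\bigl(\theta^t(g^{[pN]}) \cdot E^{[p]}_{k-l-2t}\bigr).
\]

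The first step is to replace the $p$-depleted Eisenstein series $E^{[p]}_{k-l-2t}$ by a suitable classical Eisenstein series of level $Np$, following the analog for $\boldsymbol{E}$ of Lemmas \ref{keylemma} and \ref{lem:correctionterm} (which do this for $\boldsymbol{F}$). The substitution is mediated by the Atkin--Lehner operator at $p$; commuting this operator past $g^{[pN]}$ and tracking its effect on the Fourier expansion produces a correction factor built from local Euler factors of $g$ at $p$. Because $\boldsymbol{E}$ differs from $\boldsymbol{F}$ by the divisor-inversion $d \mapsto n/d$, which at the level of automorphic data corresponds to the involution $g \leftrightarrow g^{*}$, the ratio that emerges here is $P_p(g^{*}, p^{-l-t}\beta)/P_p(g, \beta^{-1}p^{t})$ rather than its counterpart in Theorem \ref{thm:interpolation formula}. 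The non-ordinary root $\beta$ appears (rather than $\alpha$) because we are now using $\lambda_{f_\alpha}$, characterised by $f_\alpha \mapsto 1$, in place of $\lambda_{f_\alpha^{c}}$.

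With a classical Eisenstein series in hand, I would apply Shimura's Rankin--Selberg integral representation (in the form used in \cite{LLZ14}) to identify the resulting Petersson bracket against $f^{*}$ with the completed $L$-value $\Lambda^{[pN]}(f^{*}, g, k-1-t)$. The twist by $\theta^{t}$ shifts the point of evaluation from $l$ to $k-1-t$ in the standard way, and Shimura's normalisations combine with the $\Gamma_\mathbb{C}$-factors to reassemble into the elementary prefactor $i^{k-l-2t}2^{1-k}N^{k-l-2t}$.

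The hard part will be the last step: relating the functional $\lambda_{f_\alpha} \circ e$ on ordinary $p$-adic forms of tame level $N$ to the Petersson product $\langle f^{*}, -\rangle_{N_1}$ produced by the Rankin--Selberg integral. Unlike in Theorem \ref{thm:interpolation formula}, where the pairing of $\lambda_{\mathcal{F}^{c}}$ with $\mathcal{F}^{c}$ makes the ordinary stabilisation transparent, here $f^{*}$ is a newform at tame level $N_1$ whose ordinary projection involves both of its $p$-stabilisations $f^{*}_\alpha$ and $f^{*}_\beta$. One must decompose $f^{*}$ as a linear combination of these two stabilisations, compose with the trace map from level $N$ down to level $N_1 p$, and apply $e$; the resulting bookkeeping produces precisely the denominator factors $\mathcal{E}(f^{*}_\beta)\,\mathcal{E}^{*}(f^{*}_\beta)\,\langle f^{*}, f^{*}\rangle_{N_1}$. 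Once these contributions are combined with the explicit prefactor and the Euler-factor ratio from the Atkin--Lehner step, the stated formula falls out.
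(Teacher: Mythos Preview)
Your overall strategy—mirror the proof of Theorem \ref{thm:interpolation formula} with the dual objects substituted throughout—is exactly what the paper does; its proof is the single sentence ``the proof of Theorem \ref{thm:interpolation formula} applies verbatim here, after applying appropriate twistings.''

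However, your final paragraph manufactures a difficulty that is not there. There is no asymmetry between $\lambda_{f_\alpha^c}$ and $\lambda_{f_\alpha}$: both are projections onto an ordinary $p$-stabilised eigenform, and the analogue of Lemma \ref{linearfunctionalproof} for $\lambda_{f_\alpha}$ expresses it as a Petersson-product ratio in exactly the same way, with a suitable $p$-stabilisation of $f^c=f^*$ (hit by $W_{N_1 p^n}$) replacing $f_\beta \mid W_{N_1 p^n}$. The Rankin--Selberg integral (the analogue of Lemma \ref{llz423}) is then applied to \emph{that} specific form, not to the newform $f^*$ directly, so one never has to decompose $f^*$ into its two $p$-stabilisations or worry about the ordinary projector acting on it. The denominator $\mathcal{E}(f^*_\beta)\mathcal{E}^*(f^*_\beta)\langle f^*, f^*\rangle_{N_1}$ drops out of the analogue of Lemma \ref{linearfunctionaldenominator} by the same one-line computation as before. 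The argument really is step-for-step parallel, with no additional bookkeeping beyond tracking the twists.
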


\begin{proof}
    The proof of Theorem \ref{thm:interpolation formula} applies verbatim here, after applying appropriate twistings.
\end{proof}

\subsection{The interpolation factor at $p$}
\label{modifiedeulerfactoratp}

We briefly verify that the ratio of Euler factors appearing in Theorem \ref{thm:interpolation formula} is consistent with the general conjectures of Panchishkin and Coates--Perrin-Riou summarized in \cite{loe21}.

To the Tate dual $\mathcal{V}^{*}\left( 1\right)$ of $\mathcal{V}$, we define $\left( \mathcal{V}^{*}\left( 1\right)\right)^{+} : = \left( \mathcal{V}/\mathcal{V}^{+}\right)^{*}\left( 1\right)$, i.e.\ $\left( \mathcal{V}^{*}\left( 1\right)\right)^{+}$ is the orthogonal complement of $\mathcal{V}^{+}$ in $\mathcal{V}^{*}\left( 1\right)$. As the notation suggests, $\left( \mathcal{V}^{*}\left( 1\right)\right)^{+}$ to $\mathcal{V}^{*}\left( 1\right)$ is what $\mathcal{V}^{+}$ to $\mathcal{V}$, and defines a Panchishkin subfamily in the sense of \cite{loe21}, though we shall not pursue this explicitly here.

\begin{lemma} \label{localpolynomial}
     Let $\left( V,V^{+}\right)$ be the specialisation of $\left(\mathcal{V},\mathcal{V}^{+} \right)$ at $\left(f, \theta^{t}\left( g\right) \right)$, and let $\left( V^{*}\left( 1\right), V^{*}\left( 1\right)^{+} \right)$ be the specialisation of $ \left( \mathcal{V}^{*}\left( 1\right),\mathcal{V}^{*}\left( 1\right)^{+}\right)$ at $\left( f, \theta^{t}\left( g\right)\right)$. Then
    \begin{enumerate}
        \item $P_{p}\left( g^{*}, p^{-\left( l+t\right)} \alpha \right)=\operatorname{det}\left( 1-\varphi:\boldsymbol{D}_{\operatorname{cris}}\left( V^{+}\right) \right)$.
        \item  $P_{p}\left( g, p^{t}\alpha^{-1} \right)= \operatorname{det}\left( 1-p^{-1}\varphi^{-1}:\boldsymbol{D}_{\operatorname{cris}}\left( V^{+}\right)\right)$
        \item $P_{p}\left( g^{*},p^{-\left(l+t \right)}\beta\right)=\operatorname{det}\left( 1- \varphi: \boldsymbol{D}_{\operatorname{cris}}\left( V/V^{+}\right)\right)$
        \item  $P_{p}\left(g, p^{t}\beta^{-1} \right)=\operatorname{det}\left( 1-\varphi:\boldsymbol{D}_{\operatorname{cris}}\left( \left( V^{*}\left( 1\right)\right)^{+}\right) \right)$
    \end{enumerate}

\end{lemma}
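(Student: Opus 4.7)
The plan is to reduce each of the four identities to a direct computation of crystalline Frobenius eigenvalues. Write $\alpha, \beta$ for the roots of the Hecke polynomial of $f$ at $p$, with $\alpha$ ordinary, so that the unramified sub $(\rho_{f})^{+}$ has $\varphi$-eigenvalue $\alpha$ on $\boldsymbol{D}_{\operatorname{cris}}$ and the corresponding quotient has $\varphi$-eigenvalue $\beta$; write $\gamma, \delta$ for the Frobenius eigenvalues on $\boldsymbol{D}_{\operatorname{cris}}(\rho_{g})$. These satisfy $\alpha\beta = \psi(p) p^{k-1}$ and $\gamma\delta = \epsilon(p) p^{l-1}$.

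At the specialisation $\left( f, \theta^{t}(g)\right)$ (which we may take to lie in $\Sigma''$ so that everything is crystalline), we have $V = \rho_{f} \otimes \rho_{g}^{\vee} \otimes \varepsilon_{\operatorname{cyc}}^{t+1}$ and $V^{+} = (\rho_{f})^{+} \otimes \rho_{g}^{\vee} \otimes \varepsilon_{\operatorname{cyc}}^{t+1}$. Using the normalisation in which $\varphi$ on $\boldsymbol{D}_{\operatorname{cris}}(\mathbb{Q}_{p}(1))$ acts by $p^{-1}$, one reads off the Frobenius eigenvalues on $\boldsymbol{D}_{\operatorname{cris}}(V^{+})$ as $\{\alpha \gamma^{-1} p^{-(t+1)},\, \alpha \delta^{-1} p^{-(t+1)}\}$, on $\boldsymbol{D}_{\operatorname{cris}}(V/V^{+})$ as $\{\beta \gamma^{-1} p^{-(t+1)},\, \beta \delta^{-1} p^{-(t+1)}\}$, and on $\boldsymbol{D}_{\operatorname{cris}}((V^{*}(1))^{+}) = \boldsymbol{D}_{\operatorname{cris}}((V/V^{+})^{*}(1))$ as $\{\gamma \beta^{-1} p^{t},\, \delta \beta^{-1} p^{t}\}$.

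On the polynomial side, the generating function definition yields $P_{p}(g, X) = (1 - \gamma X)(1 - \delta X)$, and the identification $\rho_{g^{*}} \cong \rho_{g} \otimes \epsilon^{-1}$ (valid at crystalline points, where $\epsilon$ has trivial $p$-part and $\epsilon^{-1}$ is unramified at $p$) gives $P_{p}(g^{*}, X) = (1 - \gamma \epsilon^{-1}(p) X)(1 - \delta \epsilon^{-1}(p) X)$. Using $\gamma\delta = \epsilon(p) p^{l-1}$, we have $\gamma \epsilon^{-1}(p) = \delta^{-1} p^{l-1}$ and $\delta \epsilon^{-1}(p) = \gamma^{-1} p^{l-1}$, so substituting $X = p^{-(l+t)} \alpha$ in $P_{p}(g^{*}, X)$ yields $(1 - \alpha \delta^{-1} p^{-(t+1)})(1 - \alpha \gamma^{-1} p^{-(t+1)})$, matching $\det(1 - \varphi \mid \boldsymbol{D}_{\operatorname{cris}}(V^{+}))$; this proves (1). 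For (2), the eigenvalues of $p^{-1}\varphi^{-1}$ on $\boldsymbol{D}_{\operatorname{cris}}(V^{+})$ are $\gamma \alpha^{-1} p^{t}$ and $\delta \alpha^{-1} p^{t}$, giving $P_{p}(g, p^{t} \alpha^{-1})$ at once. Identities (3) and (4) follow from the arguments for (1) and (2) verbatim, swapping $\alpha \leftrightarrow \beta$ and replacing $V^{+}$ by $V/V^{+}$ (resp.\ $(V^{*}(1))^{+}$).

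There is no conceptual obstacle; the only care required is in pinning down the conventions, in particular the normalisation of $\varphi$ on $\boldsymbol{D}_{\operatorname{cris}}(\mathbb{Q}_{p}(1))$, the identification of $(\rho_{f})^{+}$ as the unramified $\alpha$-line, and the compatibility $\rho_{g^{*}} \cong \rho_{g} \otimes \epsilon^{-1}$ between the complex-conjugate newform and the character twist. Once these are fixed, the four identities are immediate from the two determinant relations $\alpha\beta = \psi(p) p^{k-1}$ and $\gamma\delta = \epsilon(p) p^{l-1}$.
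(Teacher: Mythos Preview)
Your proposal is correct and follows essentially the same approach as the paper: identify $V^{+}$ explicitly as $\alpha(f)\otimes\rho_{g}^{*}\otimes\varepsilon_{\operatorname{cyc}}^{1+t}$, compute the crystalline Frobenius eigenvalues, and match against the Hecke polynomial of $g$ (resp.\ $g^{*}$) using $\gamma\delta=\epsilon(p)p^{l-1}$. The paper carries out only case~(1) in detail and leaves the rest to the reader, computing $\det(1-\alpha p^{-1-t}\varphi^{-1}:\boldsymbol{D}_{\operatorname{cris}}(\rho_{g}))$ directly from the quadratic Hecke polynomial rather than naming the individual roots $\gamma,\delta$ as you do, but the substance is identical.
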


\begin{proof}
    As the proofs are identical, we give a proof to the first equality, and leave the rest to the reader.
    Recall $\psi$ was defined to be the prime-to-$p$ character of $f$. We denote the Galois character associated to $\psi$ by the same symbol, i.e.\ $\psi:G_{\QQ} \rightarrow \mathcal{O}^{\times} $ is the character which sends the geometric Frobenius $\operatorname{Frob}_{\ell}^{-1}$ to $\psi\left( \ell\right)$.
    Let $\alpha\left( f\right) : G_{\QQ_{p}} \rightarrow \mathcal{O}^{\times}$ denote the unramified character satisfying $$\alpha\left( f\right) \left(\operatorname{Frob}^{-1}_{p} \right)= a_{p}\left( f\right), $$
    where as usual $a_{p}\left( f\right)$ is the $p$-th Fourier coefficient of $f$. Since $f$ is ordinary, $\rho_{f} \mid_{G_{\QQ_{p}}}$ has a one-dimensional unramified subrepresentation, and we can describe the shape of $\rho_{f} \mid_{G_{\QQ_{p}}}$ explicitly: $$
    \rho_{f} \mid_{G_{\QQ_{p}}} \thicksim \begin{psmallmatrix}
        \alpha\left( f \right) & * \\
        0 & \alpha\left( f\right)^{-1}  \psi  \varepsilon_{\operatorname{cyc}}^{1-k}    \end{psmallmatrix} .
    $$
    We then observe that by definition,
    $$
    \left( V,V^{+}\right)=\left( V_{f}\otimes V_{g}^{*}\left( 1+t\right), \alpha\left( f\right)\otimes \varepsilon_{\operatorname{cyc}}^{1+t} \otimes V_{g}^{*} \right),
    $$ and
    $$
    \left( V^{*}\left( 1\right), V^{*}\left( 1\right)^{+} \right)=\left( V_{f}^{*}\otimes V_{g}\left( -t\right), \alpha\left( f\right) \otimes \psi^{-1} \otimes \varepsilon_{\operatorname{cyc}}^{k-t-1} \otimes V_{g} \right).
    $$ Let $b_{p}$ be the $p$-th Fourier coefficient of $g$. Then we compute that (building on the work of \cite{Scholl1990}):
         \begin{align*}
        & \operatorname{det}\left(1-\varphi:\boldsymbol{D}_{\operatorname{cris}}\left( \alpha_{f} \otimes \rho_{g}^{*} \otimes \varepsilon_{\operatorname{cyc}}^{1+t} \right) \right) \\
        =& \operatorname{det}\left( 1- \alpha\left( f\right) p^{-1-t} \varphi^{-1}: \boldsymbol{D}_{\operatorname{cris}}\left( \rho_{g}\right) \right) \\
        =& \frac{\left(\alpha\left( f\right)p^{-1-t} \right)^{2}-b_{p}\left(\alpha\left( f\right)p^{-1-t} \right)+\epsilon\left( p\right)p^{l-1}}{\epsilon\left( p\right)p^{l-1}} \\
        =& P_{p}\left( g^{*}, p^{-l-t}\alpha\right).\qedhere
                \end{align*}
\end{proof}

An immediate consequence of the above calculations is that our interpolation formulae (both Theorem \ref{thm:interpolation formula} and Theorem \ref{thm:dualinterpolationformula}) do have the right shape (at least when $f$ and $g$ are both crystalline), as predicted by the second author \cite{loe21}. Take Theorem \ref{thm:interpolation formula} as example, \cite[Conjecture 2.8]{loe21} predicted that our $p$-adic $L$-function should have the following interpolation property: $$
\mathcal{L}\left( x\right)= \left( \text{Euler factor at }p \right)\cdot \dfrac{L\left( M_{x},0\right)}{\left( \operatorname{period}\right)},
$$ where the expected Euler factor is
$$
\operatorname{det}\left( 1-p^{-1}\varphi^{-1}:\boldsymbol{D}_{\operatorname{cris}}\left( V^{+}\right)\right) \cdot \operatorname{det}\left(1-\varphi: \boldsymbol{D}_{\operatorname{cris}}\left( V/V^{+} \right) \right).
$$
But this is precisely what we will see in Equation \eqref{eq:loefflerconjecture} below, for $x \in \Sigma''\left( \mathcal{V},\mathcal{V}^{+}\right)$.

We now use Lemma \ref{localpolynomial} to make a reduction step for the proof of the functional equation for our $p$-adic $L$-functions. By definition, $$P_{p}\left( f,g^{*},l+t\right) \Lambda^{[N]}(f, g^{*}, l+t)=   \Lambda^{[pN]}(f, g^{*}, l+t),$$ where $P_{p}\left( f,g^{*},l+t\right) = \operatorname{det}\left(1-\varphi: \boldsymbol{D}_{\operatorname{cris}}\left(V_{f} \otimes V_{g}^{*}\left( 1+t\right) \right) \right) $ is the reciprocal local Euler factor of the representation $V= \mathcal{V}\left( f, \theta^{t}\left( g\right)\right)= V_{f} \otimes V_{g}^{*}\left( 1+t\right)$ at $p$. Since submodules and quotients of crystalline modules are crystalline, we have \begin{align*}
    \operatorname{det}\left(1-\varphi: \boldsymbol{D}_{\operatorname{cris}}\left( V \right) \right) &= \operatorname{det}\left(1-\varphi: \boldsymbol{D}_{\operatorname{cris}}\left( V^{+} \right) \right) \cdot \operatorname{det}\left(1-\varphi: \boldsymbol{D}_{\operatorname{cris}}\left( V/V^{+} \right) \right).
\end{align*}

By Lemma \ref{localpolynomial}, we find that \begin{align}
    &\frac{P_{p}\left( g, p^{t} \alpha^{-1}\right)}{P_{p}\left(g^{*}, \alpha p^{-l-t}\right)} P_{p}\left( f,g^{*},l+t\right)\\
    =& \operatorname{det}\left( 1-p^{-1}\varphi^{-1}:\boldsymbol{D}_{\operatorname{cris}}\left( V^{+}\right)\right) \cdot \operatorname{det}\left( 1-\varphi:\boldsymbol{D}_{\operatorname{cris}}\left( V^{+}\right) \right)^{-1} \cdot \operatorname{det}\left(1-\varphi: \boldsymbol{D}_{\operatorname{cris}}\left( V \right) \right) \\
    =& \operatorname{det}\left( 1-p^{-1}\varphi^{-1}:\boldsymbol{D}_{\operatorname{cris}}\left( V^{+}\right)\right) \cdot \operatorname{det}\left(1-\varphi: \boldsymbol{D}_{\operatorname{cris}}\left( V/V^{+} \right) \right) \label{eq:loefflerconjecture}.
\end{align}

On the other hand, since by definition we have $V^{*}\left( 1\right)= \mathcal{V}^{*}\left( 1\right)\left( f, \theta^{t}\left( g\right)\right)$, and $P_{p}\left( f^{*},g,k-1-t\right) =\operatorname{det}\left(1-\varphi: \boldsymbol{D}_{\operatorname{cris}}\left( V^{*}\left( 1\right) \right) \right) $, similar to above we have \begin{align*}
    &\frac{P_{p}\left( g^{*}, p^{-l-t} \beta\right)}{P_{p}\left(g, \beta^{-1} p^{t}\right)} \cdot P_{p}\left( f^{*},g, k-1-t\right) \\
    =&\operatorname{det}\left( 1- p^{-1}\varphi^{-1}:\boldsymbol{D}_{\operatorname{cris}}\left( V^{*}\left( 1\right)^{+}\right)\right) \cdot \operatorname{det}\left( 1-\varphi:\boldsymbol{D}_{\operatorname{cris}}\left( \left( V^{*}\left( 1\right)\right)^{+}\right) \right)^{-1} \cdot \operatorname{det}\left(1-\varphi: \boldsymbol{D}_{\operatorname{cris}}\left( V^{*}\left( 1\right) \right) \right)\\
    =& \operatorname{det}\left( 1- p^{-1}\varphi^{-1}:\boldsymbol{D}_{\operatorname{cris}}\left( V^{*}\left( 1\right)^{+}\right)\right) \cdot \operatorname{det}\left( 1-\varphi: \boldsymbol{D}_{\operatorname{cris}}\left( \left( V^{+}\right)^{*}\left( 1\right)\right)  \right)
\end{align*} by noting that $V^{*}\left( 1\right)/V^{*}\left( 1\right)^{+} = V^{*}\left( 1\right)/\left( V/V^{+}\right)^{*}\left( 1\right)=\left( V^{+}\right)^{*}\left( 1\right)$. In particular, by Lemma \ref{localpolynomial}, we can tell that

\begin{prop} \label{modifiedeulerfactor}
    \begin{align*}
        \frac{P_{p}\left( g, p^{t} \alpha^{-1}\right)}{P_{p}\left(g^{*}, \alpha p^{-l-t}\right)}\cdot P_{p}\left( f,g^{*},l+t\right) = \frac{P_{p}\left( g^{*}, p^{-l-t} \beta\right)}{P_{p}\left(g, \beta^{-1} p^{t}\right)} \cdot P_{p}\left( f^{*},g, k-1-t\right).
    \end{align*}
\end{prop}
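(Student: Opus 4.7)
The proposition is essentially a matching exercise between the two identities already obtained immediately before its statement. The plan is to rewrite both sides of the claimed equality in terms of the crystalline determinants of $V^+$ and $V/V^+$ alone, and then invoke a single standard duality identity for filtered $\varphi$-modules.

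First, I would recall the two displayed computations that precede the proposition. Together they give
\[
\text{LHS} = \det\!\bigl(1-p^{-1}\varphi^{-1} : \boldsymbol{D}_{\operatorname{cris}}(V^{+})\bigr)\cdot \det\!\bigl(1-\varphi : \boldsymbol{D}_{\operatorname{cris}}(V/V^{+})\bigr),
\]
\[
\text{RHS} = \det\!\bigl(1-p^{-1}\varphi^{-1} : \boldsymbol{D}_{\operatorname{cris}}(V^{*}(1)^{+})\bigr)\cdot \det\!\bigl(1-\varphi : \boldsymbol{D}_{\operatorname{cris}}((V^{+})^{*}(1))\bigr),
\]
where $V^{*}(1)^{+}=(V/V^{+})^{*}(1)$ has been used in the RHS. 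So the proposition reduces to proving the two separate identities
\begin{align*}
\det\!\bigl(1-\varphi : \boldsymbol{D}_{\operatorname{cris}}(V/V^{+})\bigr) &= \det\!\bigl(1-p^{-1}\varphi^{-1} : \boldsymbol{D}_{\operatorname{cris}}((V/V^{+})^{*}(1))\bigr),\\
\det\!\bigl(1-p^{-1}\varphi^{-1} : \boldsymbol{D}_{\operatorname{cris}}(V^{+})\bigr) &= \det\!\bigl(1-\varphi : \boldsymbol{D}_{\operatorname{cris}}((V^{+})^{*}(1))\bigr).
\end{align*}

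Next, I would establish the general lemma underlying both: for any finite-dimensional crystalline representation $W$ (over a $p$-adic field) of dimension $d$,
\[
\det\!\bigl(1-\varphi : \boldsymbol{D}_{\operatorname{cris}}(W^{*}(1))\bigr) = \det\!\bigl(1-p^{-1}\varphi^{-1} : \boldsymbol{D}_{\operatorname{cris}}(W)\bigr).
\]
This follows from the two standard compatibilities: $\boldsymbol{D}_{\operatorname{cris}}$ commutes with duals, so $\varphi$ on $\boldsymbol{D}_{\operatorname{cris}}(W^{*})$ is the transpose of $\varphi_W^{-1}$; and the Tate twist $(1)$ scales the Frobenius by $p^{-1}$ (since $\varphi(t)=pt$ on the Fontaine period). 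Combining these, the Frobenius on $\boldsymbol{D}_{\operatorname{cris}}(W^{*}(1))$ is $p^{-1}(\varphi_W^{-1})^{\top}$, and computing $\det(1-p^{-1}(\varphi_W^{-1})^{\top})$ and comparing to $\det(1-p^{-1}\varphi_W^{-1})$ gives the identity (transpose preserves the characteristic polynomial).

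Finally, I would apply the lemma with $W = V/V^{+}$ to match the RHS's first factor with the LHS's second factor, and with $W = V^{+}$ to match the RHS's second factor with the LHS's first factor. Since $V^{+}$ and $V/V^{+}$ are crystalline at the specialisation points $(f,\theta^{t}(g)) \in \Sigma''(\mathcal{V},\mathcal{V}^{+})$ (as both $f$ and $g$ are assumed crystalline there, and submodules and quotients of crystalline modules remain crystalline), the lemma applies and both sides are equal. There is no real obstacle: the identification $V^{*}(1)^{+}=(V/V^{+})^{*}(1)$ has already been observed in the text, the two displayed computations are already in place, and the only genuinely new input is the elementary Tate-twist/duality identity above, which could equally well be cited from \cite{fon94}.
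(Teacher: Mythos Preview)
Your proof is correct. Both you and the paper arrive at the same pair of displayed identities expressing the LHS and RHS in terms of crystalline Frobenius determinants on $V^{+}$, $V/V^{+}$, $(V^{+})^{*}(1)$, and $V^{*}(1)^{+}$; the only question is how to match them.

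The paper's route is to invoke Lemma~\ref{localpolynomial} once more: by the same explicit computation used there (working out the Frobenius eigenvalues on each two-dimensional piece from the explicit description of $V^{+}$, $V/V^{+}$ and their Tate duals), one identifies each of the four determinants as a concrete value $P_{p}(g,\cdot)$ or $P_{p}(g^{*},\cdot)$ and observes that the same two polynomial values appear on both sides. Your route is instead the abstract duality identity
\[
\det\!\bigl(1-\varphi:\boldsymbol{D}_{\operatorname{cris}}(W^{*}(1))\bigr)=\det\!\bigl(1-p^{-1}\varphi^{-1}:\boldsymbol{D}_{\operatorname{cris}}(W)\bigr),
\]
applied once with $W=V^{+}$ and once with $W=V/V^{+}$. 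This is more conceptual and avoids repeating the explicit Scholl-style computation four times; the paper's approach, on the other hand, has the merit of making each factor visible as an explicit Hecke-polynomial value, which is what is actually used downstream in the functional-equation argument. Either is fine here.
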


In particular, to study the complicated comparison \begin{align*}
    \frac{P_{p}\left( g, p^{t} \alpha^{-1}\right)}{P_{p}\left(g^*, \alpha p^{-l-t}\right)}\Lambda^{[pN]}(f, g^{*}, l+t) \thicksim \frac{P_{p}\left( g^{*}, p^{-l-t} \beta\right)}{P_{p}\left(g, \beta^{-1} p^{t}\right)} \Lambda^{[pN]}\left( f^{*},g, k-1-t\right),
\end{align*} it suffices to just compare \begin{align*}
    \Lambda^{[N]}(f, g^{*}, l+t) \thicksim \Lambda^{[N]}(f^{*}, g, k-1-t), 
\end{align*} \rev{since the factor distinguishing $\Lambda^{[pN]}(f, g^{*}, l+t)$ and $\Lambda^{[N]}(f, g^{*}, l+t)$ is precisely the degree $4$ polynomial $P_{p}\left( f,g^{*},l+t\right)$ (and analogously for $ \Lambda^{[pN]}\left( f^{*},g, k-1-t\right)$ and $ \Lambda^{[N]}\left( f^{*},g, k-1-t\right)$). }

\section{The functional equation}

\subsection{The complex functional equation} \label{sec:complexfunctionalequation}

As before we assume both $f$ and $g$ are crystalline.
For each prime $\nu$, define the local complex $\gamma$-factor \[ \gamma_{\nu}\left( s\right)\coloneqq  \gamma_{\nu}\left(f,g,s,\vartheta \right)= \epsilon_{\nu}\left(f,g,s, \vartheta \right)\frac{L_{\nu}\left( f^{*},g^{*},k+l-1-s\right)}{L_{\nu}\left(f,g,s \right)}.\] \rev{Here, ${L_{\nu}\left(f,g,s \right)}$ is the local Rankin-Selberg $L$-factor at $\nu$, defined as the reciprocal of the polynomial $P_{\nu}(f,g,\nu^{-s})$, and $\epsilon_{\nu}(f,g,s,\vartheta)$ is the local epsilon factor with respect to a fixed addtive character $\vartheta: \QQ_{\nu}\to W\left( k\right)^{\times}$.}  In all discussions related to $\gamma_{\nu}\left( f,g,s, \vartheta\right)$ and $\epsilon_{\nu}\left( f,g,s,\vartheta\right) $, $f,g,\vartheta$ will keep fixed, so we omit them in the expression.

Recall the complex functional equation for Rankin-Selberg $L$-functions (from \cite[Proposition 4.1.5]{LLZ14}, see also \cite{li79} for a classical comprehensive account and \cite{Jac72} for a more modern theory):
 \begin{equation*}
     \Lambda\left( f,g,s\right)=\epsilon\left( s\right)\Lambda\left( f^{*},g^{*},k+l-1-s\right)
 \end{equation*} for $\epsilon\left( s\right)$ a global root number. Factoring out local Euler factors at bad primes, we obtain
 \begin{multline} \label{eq:complexfunctionalequation}
     \Lambda\left( f,g,s\right) = \epsilon\left( s\right)\Lambda\left( f^{*},g^{*},k+l-1-s\right) \\
    \Longleftrightarrow \quad \Lambda^{[N]}\left( f,g,s\right) =\prod\limits_{\nu \mid N} \gamma_{\nu}\left(s \right) \Lambda^{[N]}\left( f^{*},g^{*},k+l-1-s\right).
 \end{multline}

Thus to obtain a functional equation of the $p$-adic $L$-function, we need to prove the existence of a universal $\gamma$-factor for each bad prime $\nu \mid N$ that interpolates the complex ones. This will be dealt in Section \ref{sec:universalgammafactor}. For now we prove some technical results which will be crucial to establish the well-definedness of universal $\gamma$-factors.

\subsection{Factorisation of $\mathbb{T}_{\mathfrak{a}} \hat{\otimes} \mathcal{R}^{\operatorname{univ}}$ into three-variables}

Let $\left( \kappa, \Lambda\coloneqq  \mathcal{O}[[\Gamma]] \right)$ denote the universal pair deforming the trivial character from $G_{\QQ,\{p,\infty\}}$ to $\mathbb{F}^{\times}$. We consider deformations of $\overline{\rho}_{2}$ with tame determinant, i.e.\ deformations $\rho$ of $\overline{\rho}_{2}$ such that $\dfrac{\operatorname{det}\left( \rho\right)}{\omega\left( \operatorname{det}\left( \overline{\rho}_{2}\right)\right)}$ is unramified at $p$, where as usual $\omega: \mathbb{F}^{\times} \to \mathcal{O}^{\times}$ is the Teichmuller lift. Let $\boldsymbol{D}^{\operatorname{tame \ det }}_{\mathcal{O}}$ denote the corresponding deformation functor.

\begin{lemma} \label{threevariable}
 The functor $\boldsymbol{D}_{\mathcal{O}}^{\operatorname{tame \ det}}$ is representable. If we let $\left(\rho^{\operatorname{tame \ det}}, \mathcal{R}^{\operatorname{tame \ det}} \right)$ be the universal morphism and its accompanying deformation ring, then
  $$\left( \rho^{\operatorname{univ}}, \mathcal{R}^{\operatorname{univ}}\right) \cong \left( \rho^{\operatorname{tame \ det}},\mathcal{R}^{\operatorname{tame \ det}}\right) \hat{\otimes}_{\mathcal{O}} \left(\kappa, \Lambda\right).$$
\end{lemma}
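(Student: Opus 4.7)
The plan is to first establish representability of $\boldsymbol{D}^{\operatorname{tame \ det}}_{\mathcal{O}}$ as a closed-subfunctor quotient of the full deformation functor of $\overline{\rho}_{2}$, then build a ring homomorphism from $\mathcal{R}^{\operatorname{tame \ det}} \hat{\otimes}_{\mathcal{O}} \Lambda$ to $\mathcal{R}^{\operatorname{univ}}$ induced by twisting, and finally verify it is an isomorphism by exhibiting an explicit inverse via a unique square-root extraction.

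For representability, I would consider the character $\eta^{\operatorname{univ}} := (\det \rho^{\operatorname{univ}}) \cdot \omega(\det \overline{\rho}_{2})^{-1} : G_{\QQ, S_{2}} \to (\mathcal{R}^{\operatorname{univ}})^{\times}$, which deforms the trivial character. Since it takes values in $1 + \mathfrak{m}_{\mathcal{R}^{\operatorname{univ}}}$, a pro-$p$ group, its restriction to $I_{p}$ factors through the pro-$p$ quotient of $I_{p}^{\operatorname{ab}} = \zpm$, namely $1 + p\zpa$. The tame-determinant condition is exactly the vanishing of $\eta^{\operatorname{univ}}|_{I_{p}}$. Choosing a topological generator $\gamma_{0}$ of $1 + p\zpa$, I would define $\mathcal{R}^{\operatorname{tame \ det}}$ as the quotient of $\mathcal{R}^{\operatorname{univ}}$ by the closed ideal generated by $\eta^{\operatorname{univ}}(\gamma_{0}) - 1$; by construction this ring represents $\boldsymbol{D}^{\operatorname{tame \ det}}_{\mathcal{O}}$.

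Next, inflating $\kappa$ to a character of $G_{\QQ, S_{2}}$ and twisting yields a deformation $\rho^{\operatorname{tame \ det}} \otimes \kappa$ of $\overline{\rho}_{2}$ with values in $\operatorname{GL}_{2}(\mathcal{R}^{\operatorname{tame \ det}} \hat{\otimes}_{\mathcal{O}} \Lambda)$, hence a map
\[\Psi : \mathcal{R}^{\operatorname{univ}} \to \mathcal{R}^{\operatorname{tame \ det}} \hat{\otimes}_{\mathcal{O}} \Lambda\]
via the universal property of $\mathcal{R}^{\operatorname{univ}}$. For the inverse, I would show that any deformation $\rho : G_{\QQ, S_{2}} \to \operatorname{GL}_{2}(R)$ of $\overline{\rho}_{2}$ admits a unique decomposition $\rho \cong \tilde\rho \otimes \chi$ with $\tilde\rho$ of tame determinant and $\chi : G_{\QQ, \{p, \infty\}} \to R^{\times}$ deforming the trivial character. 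Taking determinants and restricting to $I_{p}$ forces $\chi^{2}|_{I_{p}} = \eta|_{I_{p}}$ as characters of $1 + p\zpa$ valued in $1 + \mathfrak{m}_{R}$, where $\eta := (\det \rho) \cdot \omega(\det \overline{\rho}_{2})^{-1}$. Since $p > 2$, squaring is an automorphism of the pro-$p$ groups $1 + p\zpa$ and $1 + \mathfrak{m}_{R}$, so $\chi|_{I_{p}}$ is uniquely determined. Because $\QQ$ admits no non-trivial unramified extensions, a character of $G_{\QQ, \{p, \infty\}}$ deforming the trivial character is determined by its restriction to $I_{p}$, so $\chi$ itself exists and is unique, and setting $\tilde\rho := \rho \otimes \chi^{-1}$ completes the extraction. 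Applying this to $\rho^{\operatorname{univ}}$ produces a pair (tame-determinant deformation, $\Lambda$-character) over $\mathcal{R}^{\operatorname{univ}}$, hence a map
\[\Phi : \mathcal{R}^{\operatorname{tame \ det}} \hat{\otimes}_{\mathcal{O}} \Lambda \to \mathcal{R}^{\operatorname{univ}},\]
and the uniqueness of the decomposition together with universality forces $\Psi$ and $\Phi$ to be mutually inverse.

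The main obstacle, and where the argument's subtlety lies, is the existence and uniqueness of the "square root" character $\chi$: this relies crucially on the hypothesis $p > 2$, because if $p = 2$ then squaring on $1 + 2\zpa$ fails to be surjective and the clean decomposition would break down. A secondary technical point is verifying that the extracted character $\chi$ genuinely factors through $G_{\QQ, \{p, \infty\}}$ rather than only through $G_{\QQ, S_{2}}$; this follows because characters of $G_{\QQ, \{p, \infty\}}$ lifting the trivial character are detected entirely by their restriction to inertia at $p$, which itself rests on the Kronecker--Weber description $\operatorname{Gal}(\QQ(\mu_{p^{\infty}})/\QQ) \cong \zpm$ and total ramification at $p$.
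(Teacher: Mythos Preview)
Your proposal is correct and follows essentially the same route as the paper: both arguments hinge on extracting a unique square root of the ``determinant-discrepancy'' character inside the pro-$p$ group $1+\mathfrak{m}_R$ (using $p>2$), and then untwisting to land in the tame-determinant locus. Your write-up is in fact more careful than the paper's in two respects---you make the representability explicit as a quotient by $\eta^{\operatorname{univ}}(\gamma_0)-1$, and you spell out why the extracted character extends from $I_p$ to $G_{\QQ,\{p,\infty\}}$ via Kronecker--Weber---whereas the paper simply declares $\tau$ to be the character of $\operatorname{Gal}(\QQ(\zeta_{p^\infty})/\QQ)$ matching $\eta$ on wild inertia and proceeds from there.
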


\begin{proof}
 The first part of this Lemma follows readily because it is a ``determinant" condition.

 For the second part, we construct the explicit isomorphism. For any deformation $\left(\rho,A \right)$, if $\frac{\operatorname{det}\left( \rho \right)}{\omega\left( \operatorname{det}\left( \overline{\rho}_{2}\right)\right)}$ is not unramified at $p$, then there exists a unique character $\tau : \operatorname{Gal}\left( \QQ\left(\zeta_{p^{\infty}} \right)/ \QQ\right) \rightarrow (1 + \mathfrak{m}_A)^\times$ which agrees with $\frac{\operatorname{det}\left( \rho \right)}{\omega\left( \operatorname{det}\left( \overline{\rho}_{2}\right)\right)}$ on the wild inertia group at $p$.

 Since $\tau$ takes values in a pro-$p$ group, it has a unique square root. Define $\rho'\coloneqq  \tau^{-1/2}\otimes \rho$, and so $\rho \cong \rho' \otimes \tau^{1/2}$. By construction, $\rho'$ is also a deformation of $\bar{\rho}_2$, and its determinant is tamely ramified, so it factors through $\rho^{\operatorname{tame \ det}}$. On the other hand, $\tau^{1/2}$ has trivial reduction and is unramified outside $\{p, \infty\}$, and so factors through $\kappa$.

 Conversely, if $\epsilon: G_{\QQ,\{p,\infty \}} \rightarrow A$ is a character that deforms the trivial character, and $\rho$ is a deformation of $\overline{\rho}$ with $\frac{\operatorname{det}\left( \rho \right)}{\omega\left( \operatorname{det}\left( \overline{\rho}_{2}\right)\right)}$ being unramified at $p$, then $\epsilon \otimes \rho$ is again a deformation of $\overline{\rho}_{2}$.
\end{proof}

\begin{lemma}\label{reducedthreevariable}
     The ring $\left(\mathbb{T}_{\mathfrak{a}}\hat{\otimes} \mathcal{R}^{\operatorname{tame \ det}} \right)[1/p]$ is non-zero and reduced.
\end{lemma}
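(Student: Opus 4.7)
My plan is to reduce the statement to the corresponding assertion for the larger ring $\mathcal{A} = \mathbb{T}_{\mathfrak{a}} \hat{\otimes}_{\mathcal{O}} \mathcal{R}^{\operatorname{univ}}$. By Lemma \ref{threevariable}, $\mathcal{R}^{\operatorname{univ}} \cong \mathcal{R}^{\operatorname{tame \ det}} \hat{\otimes}_{\mathcal{O}} \Lambda$, so identifying $\Lambda$ with a formal power series ring in $\gamma - 1$ (where $\gamma$ is a topological generator of the pro-$p$ part of $\Gamma$, the torsion part contributing only a harmless finite étale factor over $\mathcal{O}$) yields $\mathcal{A} \cong \widetilde{\mathcal{A}}[[\gamma - 1]]$. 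The inclusion of constants $\widetilde{\mathcal{A}} \hookrightarrow \mathcal{A}$ is $\mathcal{O}$-split, with retraction given by the augmentation $\gamma \mapsto 1$, and this splitting survives inverting $p$. Hence it suffices to show $\mathcal{A}[1/p]$ is non-zero and reduced.

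For non-vanishing: $\mathbb{T}_{\mathfrak{a}}$ is a normal domain finite over the regular local ring $\Lambda \cong \mathcal{O}[[T]]$, and therefore $\mathcal{O}$-flat. By Theorem \ref{r=t}, $\mathcal{R}^{\operatorname{univ}} \cong \mathbb{T}_{\mathfrak{n}}$ is the completion of a Hecke algebra acting faithfully on the $\mathcal{O}$-flat space $S(N_{2}, \mathcal{O})$, hence is itself $\mathcal{O}$-flat. A standard argument (writing each factor as a quotient of a formal power series ring over $\mathcal{O}$) then shows that the completed tensor product $\mathcal{A}$ is $\mathcal{O}$-flat, so in particular $\mathcal{A}[1/p] \neq 0$.

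For reducedness, the strategy is to exploit the density of modular points. Hida theory exhibits $\mathbb{T}_{\mathfrak{a}}[1/p]$ as a normal one-dimensional domain finite over $\Lambda[1/p]$, hence reduced. Combining Theorem \ref{r=t} with the density of classical lifts of $\overline{\rho}_{2}$ in $\operatorname{Spec}(\mathcal{R}^{\operatorname{univ}})$, the ring $\mathcal{R}^{\operatorname{univ}}[1/p]$ embeds into a product of residue fields at classical points and is therefore reduced. I would then present $\mathcal{A}[1/p]$ as a finite module over the ring $\mathcal{B}[1/p] := (\Lambda \hat{\otimes}_{\mathcal{O}} \mathcal{R}^{\operatorname{univ}})[1/p] \cong \mathcal{R}^{\operatorname{univ}}[[T]][1/p]$, which is reduced and excellent (power series over a reduced excellent ring), and verify that this finite extension is generically reduced by checking it at modular specialisations $(f, \theta^t(g))$, where the representations decouple into products and the Galois data is semisimple. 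A finite extension of a reduced excellent ring is reduced as soon as it is generically reduced, which would then conclude the argument.

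The main obstacle is genuinely the last step: reducedness at the level of the \emph{completed} tensor product, since completion does not in general preserve reducedness. Overcoming this requires careful use of the excellence of the $p$-adic Hecke algebras involved together with the fact that the generic fibre lives over the characteristic-zero field $L$, where nilpotents can be detected on a dense set of classical specialisations whose associated Galois representations are pairwise distinct.
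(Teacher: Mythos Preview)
Your reduction to $\mathcal{A}[1/p]$ via the split inclusion $\widetilde{\mathcal{A}} \hookrightarrow \widetilde{\mathcal{A}}[[X]]$ is valid, and your non-vanishing argument is essentially the paper's. The genuine gap is in the reducedness step. The assertion ``a finite extension of a reduced excellent ring is reduced as soon as it is generically reduced'' is false: take $A = k[t]$ and $B = k[t,x]/(x^2, tx)$. Then $B$ is finite over the reduced excellent ring $A$, its unique minimal prime is $(x)$, and $B_{(x)} \cong k(t)$ is a field, yet $x \ne 0$ is nilpotent in $B$. What is missing is an $S_1$-type condition, most cleanly supplied by \emph{flatness}. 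Separately, ``checking at modular specialisations'' does not verify generic reducedness: modular points are closed points, and any nilpotent element automatically dies in every residue field, so this test is vacuous as stated.

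Your outline can be repaired with one extra input you did not invoke: $\mathbb{T}_{\mathfrak{a}}$ is finite \emph{flat} over $\Lambda$, so $\mathcal{A} = \mathbb{T}_{\mathfrak{a}} \otimes_{\Lambda} \mathcal{B}$ is finite flat over $\mathcal{B}$. Flatness gives an embedding $\mathcal{A}[1/p] \hookrightarrow \mathcal{A}[1/p] \otimes_{\mathcal{B}[1/p]} \operatorname{Tot}(\mathcal{B}[1/p])$, and since $\mathcal{B}[1/p]$ is reduced, its total ring of fractions is a finite product of fields $K_i$. Each minimal prime of $\mathcal{B}[1/p]$ meets $\Lambda$ trivially, so $\operatorname{Frac}(\Lambda) \subset K_i$, and the corresponding factor is $\mathbb{T}_{\mathfrak{a}} \otimes_{\Lambda} K_i \cong \operatorname{Frac}(\mathbb{T}_{\mathfrak{a}}) \otimes_{\operatorname{Frac}(\Lambda)} K_i$, a tensor product of field extensions over the characteristic-zero field $\operatorname{Frac}(\Lambda)$, hence reduced by separability. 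No density argument at modular points is needed.

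For comparison, the paper proceeds entirely differently, via rigid geometry. It observes that $(\mathbb{T}_{\mathfrak{a}} \hat\otimes \mathcal{R}^{\mathrm{tame\ det}})[1/p]$ is Jacobson, so reducedness can be tested on completed local rings at closed points; identifies the set of closed points with the rigid-analytic fibre product $(\operatorname{Spf}\mathbb{T}_{\mathfrak{a}})_\eta \times_L (\operatorname{Spf}\mathcal{R}^{\mathrm{tame\ det}})_\eta$ via Berthelot's generic fibre; applies Ducros's theorem that a product of geometrically reduced rigid spaces over the perfect field $L$ is reduced; and finally uses excellence of affinoid algebras to transfer reducedness of the rigid space back to the completed local rings. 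This avoids any explicit use of the finite-flatness of $\mathbb{T}_{\mathfrak{a}}$ over $\Lambda$, at the cost of importing more rigid-analytic machinery.
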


\begin{proof}
 The non-zero part is clear as both factors of $\left(\mathbb{T}_{\mathfrak{a}}\hat{\otimes} \mathcal{R}^{\operatorname{tame \ det}} \right)$ are $\mathcal{O}$-flat, hence $\mathbb{T}_{\mathfrak{a}}\hat{\otimes} \mathcal{R}^{\operatorname{tame \ det}} $ is $p$-torsion-free.

 We note that both $\mathbb{T}_{\mathfrak{a}}$ and $\mathcal{R}^{\operatorname{\operatorname{tame \ det}}}$ are reduced. $\mathbb{T}_{\mathfrak{a}}$ is reduced, because it is a domain; $\mathcal{R}^{\operatorname{\operatorname{tame \ det}}}$ is reduced, because it can be identified with the constant power series in the power series ring $\mathcal{R}^{\operatorname{univ}} \cong \mathcal{R}^{\operatorname{\operatorname{tame \ det}}}[[X]]$ via the isomorphism studied in Lemma \ref{threevariable}, which is reduced by Corollary 3.5 of \cite{boe03}.

 To prove the Lemma, we observe that since $\left(\mathbb{T}_{\mathfrak{a}}\hat{\otimes} \mathcal{R}^{\operatorname{tame \ det}} \right)[1/p]$ is Jacobson, it is enough to prove the completed local ring of $\left(\mathbb{T}_{\mathfrak{a}}\hat{\otimes} \mathcal{R}^{\operatorname{tame \ det}} \right)[1/p]$ at each maximal ideal is reduced. \rev{Let $\left( \cdot \right)_{\eta}$ denotes Berthelot's generic fibre functor. Then we have $\operatorname{MaxSpec}\left( \left(\mathbb{T}_{\mathfrak{a}}\hat{\otimes} \mathcal{R}^{\operatorname{tame \ det}} \right)[1/p]\right)=\left( \operatorname{Spf}\left(\mathbb{T}_{\mathfrak{a}}\hat{\otimes} \mathcal{R}^{\operatorname{tame \ det}} \right)\right)_{\eta}=\left( \operatorname{Spf} \mathbb{T}_{\mathfrak{a}}\right)_{\eta} \times_{L} \left( \operatorname{Spf}\mathcal{R}^{\operatorname{tame\ det }}\right)_{\eta}$ (as sets) by Lemma 7.1.9 of \cite{DeJong1995}}. Let $X\coloneqq  \left( \operatorname{Spf} \mathbb{T}_{\mathfrak{a}}\right)_{\eta} \times_{L} \left( \operatorname{Spf}\mathcal{R}^{\operatorname{tame\ det }}\right)_{\eta}$. The completed local rings of $\left(\mathbb{T}_{\mathfrak{a}}\hat{\otimes} \mathcal{R}^{\operatorname{tame \ det}} \right)[1/p]$ at maximal ideals correspond to the completed local rings of \rev{the structure sheaf $\mathcal{O}_{X}$ on the rigid space $X_{\eta}$} at closed points. We will prove $X$ is reduced, and deduce the reducedness of $\left(\mathbb{T}_{\mathfrak{a}}\hat{\otimes} \mathcal{R}^{\operatorname{tame \ det}} \right)[1/p]$ as a consequence.

 By definition, each factor of $X$, \rev{$(\operatorname{Spf}(\mathbb{T}_{\mathfrak{a}}))_{\eta}$ and $(\operatorname{Spf}(\mathcal{R}^{\operatorname{tame \ det}}))_{\eta}$}, is a union of affinoid subspaces, so it suffices to prove the product of two reduced affinioid spaces (over $L=\operatorname{Frac}\mathcal{O}$) is reduced. Now since $L$ is perfect, the notion of reducedness and geometric reducedness are equivalent, and Ducros proved in \cite[Proposition 6.3 and Remark 6.5]{duc09} that the product of two geometrically reduced spaces is reduced (in the context of Berkovich spaces, which is more general than the one used here), so $X$ is reduced.

 By excellence of affinoid algebras proved in \cite{berk93}, an affinoid algebra is reduced if and only if its associated space is reduced. Thus we see the completed local rings of $\mathcal{O}_{X}$ at closed points, and hence the completed local rings of $\left(\mathbb{T}_{\mathfrak{a}}\hat{\otimes} \mathcal{R}^{\operatorname{tame \ det}} \right)[1/p]$ at maximal ideals, are reduced. The Lemma now follows.
\end{proof}

\begin{lemma} \label{domain}
 Let $\mathfrak{P} \in \operatorname{Spec}\left( \mathbb{T}_{\mathfrak{a}} \hat{\otimes} \mathcal{R}^{\operatorname{tame \ det}}\right)$ be a prime ideal. Then the ring $\left(\mathbb{T}_{\mathfrak{a}} \hat{\otimes}\mathcal{R}^{\operatorname{tame \ det}} / \mathfrak{P}\right) \hat{\otimes} \Lambda$ is an integral domain.
\end{lemma}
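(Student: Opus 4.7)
The plan is to identify $\Lambda$ explicitly as a one-variable formal power series ring over $\mathcal{O}$ and then conclude from the standard fact that $R[[T]]$ is a domain whenever $R$ is.

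First I would pin down the structure of $\Lambda$. Since we are deforming the trivial character $\overline{\chi} = 1$, any deformation $\chi: G_{\QQ,\{p,\infty\}} \to A^\times$ lands in $1 + \mathfrak{m}_A$, which is a pro-$p$ group; hence $\kappa$ factors through the maximal pro-$p$ abelian quotient of $G_{\QQ,\{p,\infty\}}$. By Kronecker--Weber, this abelianisation is $\operatorname{Gal}(\QQ(\zeta_{p^\infty})/\QQ) \cong \zpm$, and since $p > 2$ its maximal pro-$p$ quotient is $1 + p\zpa \cong \zpa$. So $\Gamma \cong \zpa$ and $\Lambda \cong \mathcal{O}[[T]]$, a formal power series ring in one variable; in particular $\Lambda$ is itself an integral domain.

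Setting $A := (\mathbb{T}_\mathfrak{a} \hat{\otimes} \mathcal{R}^{\operatorname{tame \ det}})/\mathfrak{P}$, this is a complete local Noetherian $\mathcal{O}$-algebra, and is a domain because $\mathfrak{P}$ is prime. Using the identification $\Lambda \cong \mathcal{O}[[T]]$ together with the fact that $A$ is $\pi$-adically complete, the completed tensor product $A \hat{\otimes}_\mathcal{O} \Lambda$ may be canonically identified with the formal power series ring $A[[T]]$. To finish, one invokes the classical lowest-coefficient argument: if $f, g \in A[[T]]$ are nonzero with $fg = 0$, and $a_i T^i$, $b_j T^j$ are the lowest nonzero terms of $f$ and $g$ respectively, then the coefficient of $T^{i+j}$ in $fg$ equals $a_i b_j$, which is nonzero in the domain $A$, contradicting $fg = 0$. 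This yields the lemma.

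The only mildly delicate input is the identification of $\Gamma$ with $\zpa$ via class field theory; this is standard Iwasawa-theoretic bookkeeping and poses no real obstacle. (In particular, unlike Lemma \ref{reducedthreevariable}, we do not need to invoke excellence of affinoid algebras or generic-fibre arguments — the three-variable factor $\Lambda$ is the trivial one from the point of view of domain properties once its power-series structure is recognised.)
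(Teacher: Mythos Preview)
Your proof is correct and follows essentially the same approach as the paper: set $A = (\mathbb{T}_\mathfrak{a} \hat{\otimes} \mathcal{R}^{\operatorname{tame\ det}})/\mathfrak{P}$, identify $\Lambda$ with $\mathcal{O}[[T]]$ so that $A \hat{\otimes}_\mathcal{O} \Lambda \cong A[[T]]$, and conclude since a power series ring over a domain is a domain. You supply more detail than the paper on the class-field-theoretic identification of $\Gamma$ and on the lowest-coefficient argument, but the underlying idea is identical.
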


\begin{proof}
 Let $A \coloneqq \left(\mathbb{T}_{\mathfrak{a}} \hat{\otimes} \mathcal{R}^{\operatorname{tame \ det}} \right) / \mathfrak{P}$. Then $A$ is a domain, by hypothesis. Moreover, since it is a quotient of a power series ring over $\mathcal{O}$ in finitely many variables, the completed tensor product $A \hat{\otimes} \Lambda$ is simply $A \hat{\otimes} \mathcal{O}[[X]] \cong A[[X]]$, which is a power series ring over a domain, and hence a domain itself.
\end{proof}

 Let $\sigma_{\nu}\coloneqq  \kappa\left( \operatorname{Frob}_{\nu}\right)$, where as usual $\operatorname{Frob}_{\nu}$ denotes an arithmetic Frobenius at $\nu$. Since $\kappa$ is unramified at $\nu \neq p$, $\sigma_{\nu}$ is well-defined.

 \begin{lemma} \label{linearindep}
  Let $A = \left(\mathbb{T}_{\mathfrak{a}} \hat{\otimes} \mathcal{R}^{\operatorname{tame \ det}}\right) / \mathfrak{P}$ be as above; and let $Q\left( X\right) \in A[X, X^{-1}]$ be a non-zero Laurent polynomial with coefficients in $A$. Then $Q\left( \sigma_{\nu}\right) \neq 0$, where we view the evaluation as in $A \hat{\otimes} \Lambda$, via the natural embedding $A[X, X^{-1}] \hookrightarrow A\hat{\otimes} \Lambda$ sending $X$ to $\sigma_\nu$.
 \end{lemma}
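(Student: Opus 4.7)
The plan is to work under the identification $\Lambda \cong \mathcal{O}[[T]]$ with $T = \gamma - 1$ for a topological generator $\gamma$ of $\Gamma$ (recall that for $p > 2$, $\Gamma$ is the maximal pro-$p$ quotient of $\operatorname{Gal}(\mathbb{Q}(\zeta_{p^{\infty}})/\mathbb{Q}) \cong \mathbb{Z}_{p}^{\times}$, namely $1 + p\mathbb{Z}_{p} \cong \mathbb{Z}_{p}$). This gives $A \hat{\otimes} \Lambda \cong A[[T]]$, under which $\sigma_{\nu}$ corresponds to $(1+T)^{u}$, where $u \in \mathbb{Z}_{p}$ is defined by $\operatorname{Frob}_{\nu} = \gamma^{u}$ in $\Gamma$. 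Since $A$ is a domain by Lemma \ref{domain}, the termwise inclusion $A[[T]] \hookrightarrow F[[T]]$ with $F \coloneqq \operatorname{Frac}(A)$ reduces the lemma to showing that $(1+T)^{u}$ is transcendental over $F$ inside $F[[T]]$.

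A preliminary observation is $u \neq 0$: under $\Gamma \cong 1 + p\mathbb{Z}_{p}$, the image of $\operatorname{Frob}_{\nu}$ is $\langle \nu \rangle \coloneqq \omega(\nu)^{-1} \nu$, and $\langle \nu \rangle = 1$ would force $\nu = \omega(\nu) \in \mu_{p-1}(\mathbb{Z}_{p})$, impossible for a prime $\nu \neq p$. Next, $\sigma_{\nu} - 1 \neq 0$ in $F[[T]]$: if $F$ has characteristic $0$, the coefficient of $T$ is $u$, nonzero since $\mathbb{Z}_{p} \hookrightarrow F$; if $F$ has characteristic $p$, writing the $p$-adic expansion $u = \sum_{i \geq m} u_{i} p^{i}$ with $u_{m} \not\equiv 0 \pmod{p}$, Lucas's theorem gives $\binom{u}{p^{m}} \equiv u_{m} \pmod{p}$, so the coefficient of $T^{p^{m}}$ is nonzero in $F$.

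The transcendence of $\sigma_{\nu}$ over $F$ then follows by an iterated ``$(Y-1)$-divisibility'' argument: if $P(Y) \in F[Y]$ is a monic polynomial of minimal degree $d \geq 1$ with $P(\sigma_{\nu}) = 0$, then reducing modulo $T$ (which sends $\sigma_{\nu} \mapsto 1$) yields $P(1) = 0$, so $P(Y) = (Y - 1) Q(Y)$ for some $Q \in F[Y]$ of degree $d - 1$. Then $(\sigma_{\nu} - 1) Q(\sigma_{\nu}) = P(\sigma_{\nu}) = 0$ in the domain $F[[T]]$, and cancelling $\sigma_{\nu} - 1 \neq 0$ gives $Q(\sigma_{\nu}) = 0$, contradicting the minimality of $P$. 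Hence $\sigma_{\nu}$ is transcendental over $F$, and the map $A[X, X^{-1}] \to A[[T]]$, $X \mapsto \sigma_{\nu}$, is injective. The main subtlety lies in the characteristic $p$ verification of $\sigma_{\nu} - 1 \neq 0$, where the leading-term argument from characteristic $0$ breaks down when $p \mid u$; Lucas's theorem is what bridges the gap.
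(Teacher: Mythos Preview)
Your argument is correct, but it takes a different route from the paper's. The paper exploits a freedom you did not use: since $\langle \nu\rangle$ has infinite order in $1+p\zpa\cong\zpa$, one may \emph{choose} the topological generator $\gamma$ of $\Gamma$ so that $\sigma_\nu=\gamma^{p^s}$ for some integer $s\ge 0$ (if $\langle\nu\rangle$ corresponds to $p^s v$ with $v\in\zpa^\times$, replace $\gamma$ by $\gamma^{v}$). Then $\sigma_\nu=(1+T)^{p^s}$ is a \emph{monic polynomial} in $T$, so for $Q(X)=\sum_i a_i X^i\in A[X]$ the element $Q(\sigma_\nu)\in A[[T]]$ is again a polynomial in $T$, with leading coefficient equal to the leading coefficient of $Q$; a trivial degree argument recovers all the $a_i$ from $Q(\sigma_\nu)$. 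This bypasses the transcendence argument, the passage to $\operatorname{Frac}(A)$, and the characteristic split entirely.

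Your approach, by contrast, fixes an arbitrary generator and proves the stronger fact that $(1+T)^u$ is transcendental over $\operatorname{Frac}(A)$ for \emph{every} nonzero $u\in\zpa$, at the cost of the Lucas-theorem detour in characteristic $p$. Both arguments are valid; the paper's is shorter and more direct for the case at hand, while yours is coordinate-free and would apply even if no convenient choice of generator were available.
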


\begin{proof}
  By factoring out a suitable negative power of $\sigma_{\nu}$, without loss of generality we may assume $Q\left(X\right)$ is actually a polynomial.

  Since $\nu$ is a prime not dividing $p$, it is not a root of unity in $\zpa$; thus $\langle \nu \rangle = \tfrac{\nu}{\omega(\nu)}$ is an infinite-order element of $1 + p\zpa$. So we may identify $\Lambda$ with a power series ring $\mathcal{O}[[T]]$ in such a way that $\sigma_\nu = (T + 1)^{p^s}$ for some $s \ge 0$. In particular, $\sigma_\nu$ is a monic polynomial in $T$, so $Q(\sigma_\nu)$ is actually a polynomial in $T$ and its leading coefficient is the leading coefficient of $Q \in A[X]$. By a descending induction on the degree, we can recover all of the coefficients of $Q$ from $Q(\sigma_\nu)$.
\end{proof}

\subsection{Universal $\gamma$-factors} \label{sec:universalgammafactor}

 In this Section we show there exists a well-defined universal $\gamma$-factor $\gamma_{\nu}\left(\rho_{\mathcal{A}} \right)$ in the total ring of fraction of $\mathcal{A}$, such that $\gamma_{\nu}\left( \rho_{\mathcal{A}}\right)\left( f,\theta^{t}\left( g\right)\right)=\gamma_{\nu}\left( f,g^{*},l+t, \vartheta\right)=: \gamma_{\nu}\left( l+t\right)$.

 For each prime $\nu \mid N$, let $W_{\nu}=W_{\QQ_{\nu}} \subset G_{\QQ_{\nu}}$ denote the Weil group of the absolute Galois group $G_{\QQ}$ at $\nu$. The half-ordinary Rankin-Selberg universal deformation $\rho_{\mathcal{A}}= \rho^{\ord} \otimes \left( \rho^{\operatorname{univ}}\right)^{*} \left( 1\right)$ restricts to a continuous representation of $W_{\nu}$. Theorem 1.1 of \cite{hm17} ensures the existence of a universal $\gamma$-factor $\gamma_{\nu}\left(\rho_{\mathcal{A}},X \right)$ that interpolates in families (for the explicit description of this $\gamma$-factor, see the Proposition below, or Section 7 of \textit{op. cit.}), i.e.\ for each $\left( f, \theta^{t}\left( g\right)\right) \in \Sigma''\left( \mathcal{V}, \mathcal{V}^{+}\right)$, we have $\gamma_{\nu}\left( \rho_{\mathcal{A}}, 1\right)\left( f, \theta^t \left( g \right) \right)=\gamma_{\nu}\left( l+t\right) $. However, as we observed in the introduction, the ring $\mathcal{T}^{-1}\mathcal{A}[X,X^{-1}]$ may be identically zero when we specialise $X$ to $1$. We shall prove this is not the case by relating $\gamma_{\nu}\left( \rho_{\mathcal{A}},X\right)$ to the universal $\gamma$-factor associated to $\rho^{\ord} \otimes \rho^{\operatorname{tame \ det}}$, as follows.

  The decomposition
  $$
  \left( \rho^{\operatorname{univ}},\mathcal{R}^{\operatorname{univ}}\right) \cong \left( \rho^{\operatorname{tame \ det}},\mathcal{R}^{\operatorname{tame \ det}}\right) \hat{\otimes} \left( \kappa, \Lambda\right)
  $$ of Lemma \ref{threevariable} yields a decomposition of the half-ordinary Rankin-Selberg deformation
  $$
  \rho_{\mathcal{A}}=\rho^{\ord}\otimes \left( \rho^{\operatorname{tame \ det}}\right)^{*}\left( 1\right) \otimes \kappa^{*} =: \widetilde{\rho_{\mathcal{A}}} \otimes \kappa^{*}
  $$

   Define $\widetilde{\mathcal{A}}\coloneqq \mathbb{T}_{\mathfrak{a}}\hat{\otimes}\mathcal{R}^{\operatorname{tame \ det}}$.
  We view $\widetilde{\rho_{\mathcal{A}}}$ as a representation to $\operatorname{GL}_{4}\left( \mathcal{A}\right)$ via the natural inclusion $\widetilde{\mathcal{A}} \hookrightarrow\mathcal{A}$.
\begin{prop}
  $\gamma_{\nu}\left( \rho_{\mathcal{A}},X \right)=\gamma_{\nu}\left( \widetilde{\rho_{\mathcal{A}}},\sigma_{\nu}X\right)$.
\end{prop}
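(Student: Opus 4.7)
The plan is to reduce the identity to the classical twist formula for local $\gamma$-factors via a density argument, with the auxiliary lemmas of the previous subsection controlling the relevant localisations. First, the right-hand side is well-defined: since $\sigma_\nu = \kappa(\operatorname{Frob}_\nu)$ lies in $\Lambda^\times \subset \mathcal{A}^\times$, the substitution $X \mapsto \sigma_\nu X$ is an automorphism of $\mathcal{A}[X, X^{-1}]$ stabilising the multiplicative set $\mathcal{T}$, so $\gamma_\nu(\widetilde{\rho_{\mathcal{A}}}, \sigma_\nu X)$ lies naturally in $\mathcal{T}^{-1}\mathcal{A}[X, X^{-1}]$.

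My next step would be to verify the identity after specialisation at each classical point $x = (f, \theta^t g) \in \Sigma''(\mathcal{V}, \mathcal{V}^+)$. At such a point, the decomposition $\rho_{\mathcal{A}} = \widetilde{\rho_{\mathcal{A}}} \otimes \kappa^*$ specialises to $\rho_{\mathcal{A}}(x) = \widetilde{\rho_{\mathcal{A}}}(x) \otimes \chi_x$, where $\chi_x$ is the character obtained by specialising $\kappa^*$. Since $\kappa$ is unramified outside $\{p,\infty\}$, the restriction $\chi_x|_{W_\nu}$ is unramified with $\chi_x(\operatorname{Frob}_\nu) = \sigma_\nu(x)^{-1}$. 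The standard classical twist formula for local $\gamma$-factors of Weil--Deligne representations under unramified twists then yields the pointwise identity $\gamma_\nu(\rho_{\mathcal{A}}(x), X) = \gamma_\nu(\widetilde{\rho_{\mathcal{A}}}(x), \sigma_\nu(x) X)$, which is exactly the specialisation of the right-hand side at $x$.

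To upgrade this pointwise equality to the claimed identity in $\mathcal{T}^{-1}\mathcal{A}[X, X^{-1}]$, I would appeal to Theorem~1.1 of \cite{hm17}: the Helm--Moss universal $\gamma$-factor is characterised by its specialisations at classical points. Combined with the reducedness of $\mathcal{A}[1/p]$ (Lemma \ref{reducedthreevariable}) and the Zariski-density of classical points in $\operatorname{Spec}(\mathcal{A})$ coming from the $R = T$ theorems of Section 2, pointwise agreement on the classical locus forces the two universal $\gamma$-factors to coincide as elements of the localised ring.

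The principal obstacle I anticipate is justifying this density/injectivity step carefully in the presence of the localisation $\mathcal{T}^{-1}$ and the substitution $X \mapsto \sigma_\nu X$: two elements of $\mathcal{A}[X, X^{-1}]$ whose classical specialisations agree need not agree after localising at $\mathcal{T}$ unless one rules out the creation of zero-divisors. Lemma \ref{linearindep} is tailored precisely for this: it guarantees that evaluating any non-zero Laurent polynomial at $\sigma_\nu$ yields a non-zero element, so the substitution does not destroy units in $\mathcal{T}$, allowing the comparison in $\mathcal{T}^{-1}\mathcal{A}[X, X^{-1}]$ to go through.
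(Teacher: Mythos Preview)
Your approach is genuinely different from the paper's. The paper proves the identity by direct computation with the explicit formulas for the Helm--Moss $\gamma$-factor given in \cite[\S 7]{hm17}: decomposing $\rho_{\mathcal{A}}$ into its tamely ramified and totally wildly ramified parts, it checks that each ingredient of the formula (the $\epsilon_0$-factor of \cite{yas09}, the Swan conductor, the characteristic polynomials of Frobenius and of $(1+\sigma+\dots+\sigma^{\nu-1})\operatorname{Frob}_\nu^{-1}$) transforms correctly under the unramified twist by $\kappa^*$, using only that $\kappa$ is unramified at $\nu$. No density or reducedness argument is invoked at this stage.

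Your density strategy can be made to work, but two of your citations are off. First, Lemma~\ref{reducedthreevariable} gives reducedness of $\widetilde{\mathcal{A}}[1/p]$, not of $\mathcal{A}[1/p]$; you need the additional (easy) step that $\mathcal{A} \cong \widetilde{\mathcal{A}}[[T]]$ inherits reducedness from $\widetilde{\mathcal{A}}$. Second, your final invocation of Lemma~\ref{linearindep} is misplaced: the substitution $X \mapsto \sigma_\nu X$ is an automorphism of $\mathcal{A}[X,X^{-1}]$ (since $\sigma_\nu \in \mathcal{A}^\times$), so it trivially preserves $\mathcal{T}$ and no appeal to that lemma is required here. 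Lemma~\ref{linearindep} concerns the \emph{evaluation} $X \mapsto \sigma_\nu$, which is only relevant later in Theorem~\ref{welldefinedgammafactor} when one sets $X=1$.

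In terms of what each approach buys: the paper's computation is self-contained and does not borrow the reducedness and density lemmas, which in the paper's logical flow are developed for the \emph{subsequent} goal of showing $\gamma_\nu(\rho_{\mathcal{A}},1)$ is well-defined. Your route is more conceptual, but it effectively defers the same unramified-twist computation to the classical setting and then lifts it by density, so it does not actually avoid the underlying calculation.
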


\begin{proof}
    Section 7 of \cite{hm17} gives an explicit construction of the universal $\gamma$-factors involved. We verify this equality using the recipe given there.

    First we note that since $\kappa$ is unramified at $\nu$, if $\rho_{\mathcal{A}}$ is totally wildly ramified (see \textit{loc.cit.} for a definition), then so is $\widetilde{\rho_{\mathcal{A}}}$. In this case, $\gamma_{\nu}\left( \rho_{\mathcal{A}},X\right)\coloneqq \epsilon_{0}\left( \rho_{\mathcal{A}},X\right)$, where $\epsilon_{0}\left( \rho_{\mathcal{A}}\right)$ is the modified $\epsilon$-factor defined in Theorem 5.3 of \cite{yas09}, and $\epsilon_{0}\left( \rho_{\mathcal{A}},X\right)$ is obtained from $\epsilon_{0}\left( \rho_{\mathcal{A}}\right)$ by twisting $\rho_{\mathcal{A}}$ by the unramified character $\operatorname{Frob}^{-1}_{\nu} \mapsto X$. Explicitly, we have $\epsilon_{0}\left( \rho_{\mathcal{A}},X\right)=\epsilon_{0}\left( \rho_{\mathcal{A}}\right)X^{\operatorname{Sw}\left( \rho_{\mathcal{A}}\right)+4}$ (c.f.\ Equation (8) of Theorem 5.1 in \cite{yas09}). On the other hand, since $\rho_{\mathcal{A}}=\widetilde{\rho_{\mathcal{A}}}\otimes \kappa^{*}$, we have $\epsilon_{0}\left( \rho_{\mathcal{A}}\right)=\epsilon_{0}\left( \widetilde{\rho_{\mathcal{A}}}\right)\sigma_{\nu}^{\left( \operatorname{Sw}\left( \widetilde{\rho_{\mathcal{A}}}\right)+4\right)}$, and it follows that $\epsilon_{0}\left( \rho_{\mathcal{A}},X\right)=\epsilon_{0}\left( \widetilde{\rho_{\mathcal{A}}},\sigma_{\nu}X\right)$.

    In the general case, \cite{hm17} defined $\gamma_{\nu}\left( \rho_{\mathcal{A}},X\right)=\gamma_{\nu}\left( \rho_{\mathcal{A}}^{0},X\right)\epsilon_{0}\left( \rho_{\mathcal{A}}^{>0},X\right)$, where $\rho_{\mathcal{A}}=\rho_{\mathcal{A}}^{0}\oplus \rho_{\mathcal{A}}^{>0}$, for $\rho^{0}_{\mathcal{A}}$ tamely ramified and $\rho_{\mathcal{A}}^{>0}$ totally wildly ramified. In light of above, it is enough to assume $\rho_{\mathcal{A}}$ is tamely ramified. In this case, we have $\gamma_{\nu}\left( \rho_{\mathcal{A}},X\right)\coloneqq \epsilon_{0}\left( \rho_{\mathcal{A}}\right)X^{\operatorname{Sw}\left( \rho_{\mathcal{A}}\right)+4}\dfrac{\operatorname{det}_{\rho_{\mathcal{A}}}\left( 1+\sigma+\ldots+\sigma^{\nu-1}\right)\operatorname{Char}_{\rho_{\mathcal{A}}}\left( \operatorname{Frob}^{-1}_{\nu}\right)\left( X\right)}{\operatorname{Char}_{\rho_{\mathcal{A}}}\left(\left( 1+\sigma+\ldots+\sigma^{\nu-1}\right)\operatorname{Frob}^{-1}_{\nu} \right)\left( X\right)}$, where $\sigma$ is a topological generator of the tame inertia. Again using the relation $\rho_{\mathcal{A}}= \kappa^{*} \otimes \widetilde{\rho_{\mathcal{A}}}$, we find that
    $\epsilon_{0}\left( \rho_{\mathcal{A}}\right)=\epsilon_{0}\left( \widetilde{\rho_{\mathcal{A}}}\right)\sigma_{\nu}^{\left( \operatorname{Sw}\left( \widetilde{\rho_{\mathcal{A}}}\right)+4\right)}$, $\operatorname{det}_{\widetilde{\rho_{\mathcal{A}}}}\left( 1+\sigma+\ldots+\sigma^{\nu-1}\right)=\operatorname{det}_{\rho_{\mathcal{A}}}\left( 1+\sigma+\ldots+\sigma^{\nu-1}\right)$ (since $\kappa$ is unramifed and $\sigma$ has trvial image under $\kappa$), and \[\dfrac{\operatorname{Char}_{\widetilde{\rho_{\mathcal{A}}}}\left( \operatorname{Frob}^{-1}_{\nu}\right)\left(\sigma_{\nu} X\right)}{\operatorname{Char}_{\widetilde{\rho_{\mathcal{A}}}}\left(\left( 1+\sigma+\ldots+\sigma^{\nu-1}\right)\operatorname{Frob}^{-1}_{\nu} \right)\left( \sigma_{\nu} X\right)}=\dfrac{\operatorname{Char}_{\rho_{\mathcal{A}}}\left( \operatorname{Frob}^{-1}_{\nu}\right)\left( X\right)}{\operatorname{Char}_{\rho_{\mathcal{A}}}\left(\left( 1+\sigma+\ldots+\sigma^{\nu-1}\right)\operatorname{Frob}^{-1}_{\nu} \right)\left( X\right)}.\] Combining these equalities, we obtain the Proposition.
\end{proof}

We will define the universal $\gamma$-factor of $\rho_{\mathcal{A}}$ as $\gamma_{\nu}\left( \rho_{\mathcal{A}}\right)\coloneqq \gamma_{\nu}\left( \rho_{\mathcal{A}},1\right)$. That is, we take the universal $\gamma$-factor $\gamma_{\nu}\left( \rho_{\mathcal{A}},X\right)$ of \cite{hm17} and set $X$ to $1$. Theorem 1.1 of \cite{hm17} would then ensure that $\gamma_{\nu}\left( \rho_{\mathcal{A}}\right)$ specialise to the usual $\gamma$-factors at all points $\left( f, \theta^t \left( g \right) \right) \in \Sigma''\left( \mathcal{V},\mathcal{V}^{+}\right)$. We have to be cautious here, because by definition, $\gamma_{\nu}\left( \rho_{\mathcal{A}},X\right) \in \mathcal{T}^{-1}\mathcal{A}[X^{-1},X]$, and we need to make sure the set $\mathcal{T}$ contains no zero-divisors upon specialising $X$ to $1$, so that the localised ring is non-zero and the universal $\gamma$-factor is well-defined. In light of the above Proposition, we have $\gamma_{\nu}\left( \rho_{\mathcal{A}},1\right)=\gamma_{\nu}\left( \widetilde{\rho_{\mathcal{A}}},\sigma_{\nu}\right)$, so it is enough to prove $\gamma_{\nu}\left( \widetilde{\rho_{\mathcal{A}}},\sigma_{\nu} \right)$ is well-defined. Let $\widetilde{\mathcal{T}}$ be the set of Laurent polynomials whose first and last coefficients are units in $\widetilde{\mathcal{A}}$, and let $\widetilde{T}$ denote the image of $\widetilde{\mathcal{T}}$ under the map $X \mapsto \sigma_{\nu}$ (viewed as elements in $\mathcal{A}$ via the natural embeddings $\widetilde{\mathcal{A}} \hookrightarrow \mathcal{A}$ and $\Lambda \hookrightarrow \mathcal{A}$). We shall prove $\widetilde{T}$ contains no zero-divisor, which by above analysis will imply $\gamma_{\nu}\left( \rho_{\mathcal{A}}\right)$ is a well-defined element in the non-zero ring $\widetilde{T}^{-1}\mathcal{A}$.

Let $Q\left( X\right) \in \widetilde{\mathcal{T}}$, and write $Q$ for the evaluation of $Q\left( X\right)$ at $X=\sigma_{\nu}$. Then

\begin{thm} \label{welldefinedgammafactor}
    $Q \in \mathcal{A}$ is not a zero-divisor. In particular, the localised ring $\widetilde{T}^{-1}\mathcal{A}$ is non-zero and $\gamma_{\nu}\left( \rho_{\mathcal{A}},1\right)$ is a well-defined element in $ \widetilde{T}^{-1}\mathcal{A}$.
\end{thm}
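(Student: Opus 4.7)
The plan is to localise the problem at each minimal prime of $\widetilde{\mathcal{A}}$, then combine Lemmas~\ref{domain} and \ref{linearindep} with the reducedness coming from Lemma~\ref{reducedthreevariable}.

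First I would make a trivial reduction: since $\sigma_\nu = \kappa(\operatorname{Frob}_\nu) \in \Lambda^{\times}$, multiplying $Q(X)$ by a power of $X^{\pm 1}$ changes $Q(\sigma_\nu)$ by a unit of $\mathcal{A}$, so we may assume $Q(X) \in \widetilde{\mathcal{A}}[X]$ is an honest polynomial with unit leading coefficient $a_n$. Using the identification $\Lambda \cong \mathcal{O}[[T]]$ with $\sigma_\nu = (T+1)^{p^s}$ from the proof of Lemma~\ref{linearindep}, the completed tensor product gives the isomorphism $\mathcal{A} = \widetilde{\mathcal{A}}\,\hat{\otimes}_{\mathcal{O}}\,\Lambda \cong \widetilde{\mathcal{A}}[[T]]$. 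Under this identification $Q(\sigma_\nu)=Q((T+1)^{p^s})$ is a polynomial in $\widetilde{\mathcal{A}}[T]$ whose leading coefficient in $T$ equals $a_n$, still a unit of $\widetilde{\mathcal{A}}$.

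Suppose then that $g \in \mathcal{A}$ satisfies $Q(\sigma_\nu)\,g = 0$. Since $\widetilde{\mathcal{A}}$ is Noetherian, it has finitely many minimal primes $\mathfrak{P}_1,\dots,\mathfrak{P}_r$. For each $i$, the reduction map
\[
\mathcal{A} \;=\; \widetilde{\mathcal{A}}[[T]] \;\twoheadrightarrow\; (\widetilde{\mathcal{A}}/\mathfrak{P}_i)[[T]] \;=\; (\widetilde{\mathcal{A}}/\mathfrak{P}_i)\,\hat{\otimes}_{\mathcal{O}}\,\Lambda
\]
lands in an integral domain by Lemma~\ref{domain}. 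The image of $Q(\sigma_\nu)$ there is a polynomial in $T$ whose leading coefficient is the non-zero image of the unit $a_n$, so it is a non-zero element of a domain and therefore not a zero-divisor. Hence the image of $g$ vanishes, i.e.\ every coefficient of the power series $g \in \widetilde{\mathcal{A}}[[T]]$ lies in $\mathfrak{P}_i$.

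Intersecting over $i$, every coefficient of $g$ lies in the nilradical $\bigcap_i \mathfrak{P}_i$ of $\widetilde{\mathcal{A}}$. But this nilradical vanishes: Lemma~\ref{reducedthreevariable} shows that $\widetilde{\mathcal{A}}[1/p]$ is reduced, and since $\mathbb{T}_\mathfrak{a}$ and $\mathcal{R}^{\operatorname{tame\ det}}$ are $\mathcal{O}$-flat, so is $\widetilde{\mathcal{A}}$, giving an injection $\widetilde{\mathcal{A}} \hookrightarrow \widetilde{\mathcal{A}}[1/p]$ which transports reducedness back. Therefore $g=0$ and $Q$ is a non-zero-divisor, from which the remaining assertions (non-triviality of $\widetilde{T}^{-1}\mathcal{A}$ and well-definedness of $\gamma_\nu(\rho_{\mathcal{A}}) = \gamma_\nu(\widetilde{\rho_\mathcal{A}},\sigma_\nu)$) follow formally from the universal property of localisation. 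The one delicate bookkeeping step is the identification $\widetilde{\mathcal{A}}[[T]]/\mathfrak{P}_i \widetilde{\mathcal{A}}[[T]] \cong (\widetilde{\mathcal{A}}/\mathfrak{P}_i)[[T]]$, which needs the finite generation of $\mathfrak{P}_i$ and is the reason the Noetherianness of $\widetilde{\mathcal{A}}$ is essential; once that identification is in place, the argument is short.
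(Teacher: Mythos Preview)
Your argument is correct and follows essentially the same strategy as the paper: reduce modulo primes of $\widetilde{\mathcal{A}}$, use Lemma~\ref{domain} to get integral domains, use the content of Lemma~\ref{linearindep} to see that the image of $Q$ is nonzero there, and then invoke the reducedness from Lemma~\ref{reducedthreevariable} to conclude. The only cosmetic difference is that the paper quantifies over all primes of $\widetilde{\mathcal{A}}[1/p]$ whereas you use only the finitely many minimal primes of $\widetilde{\mathcal{A}}$ and pull reducedness back along the injection $\widetilde{\mathcal{A}}\hookrightarrow\widetilde{\mathcal{A}}[1/p]$; this is a mild streamlining but not a different idea.
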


\begin{proof}
    \rev{To prove that $Q$ is not a zero-divisor, we will show that if $Q \cdot Q'=0 $ in $\mathcal{A}$, then it must be that $Q'=0$}. Let $\left( f,g\right) \in \operatorname{Spec}\left( \mathbb{T}_{\mathfrak{a}} \hat{\otimes} \mathcal{R}^{\operatorname{tame \ det}}[1/p] \right)$. We have $Q\cdot Q' \left( f,g\right)=  Q\left(f,g\right) \cdot Q'\left( f,g\right)=0 \in \left(\mathbb{T}_{\mathfrak{a}} \hat{\otimes} \mathcal{R}^{\operatorname{tame \ det}}\left( f,g\right) \right) \hat{\otimes} \Lambda$. Since $Q\left( f,g\right) \neq 0$ for all $\left( f,g\right)$ by Lemma \ref{linearindep}, and $\left(\mathbb{T}_{\mathfrak{a}} \hat{\otimes} \mathcal{R}^{\operatorname{tame \ det}}\left( f,g\right) \right) \hat{\otimes}\Lambda$ is a domain by Lemma \ref{domain}, this forces $Q'\left(f,g \right)=0$ for all $\left( f,g \right) \in \operatorname{Spec}\left( \mathbb{T}_{\mathfrak{a}}\hat{\otimes} \mathcal{R}^{\operatorname{tame \ det}}[1/p] \right)$.
    Write $Q' = \sum\limits_{i=0}^{\infty} b_{i}X^{\rev{i}}  \in  \mathbb{T}_{\mathfrak{a}}\hat{\otimes} \mathcal{R}^{\operatorname{tame \ det}}[[X]] \cong \mathbb{T}_{\mathfrak{a}}\hat{\otimes} \mathcal{R}^{\operatorname{tame \ det}} \hat{\otimes}\Lambda $. Now $Q'\left( f,g\right)=0$ means for all $i$, $b_{i}\left( f,g\right)=0$. Since this is valid for all $\left( f,g\right) \in \operatorname{Spec}\left( \mathbb{T}_{\mathfrak{a}}\hat{\otimes} \mathcal{R}^{\operatorname{tame \ det}}[1/p] \right)$, we conclude that $b_{i} \in  \bigcap\limits_{\substack{\mathfrak{p} \in \operatorname{Spec}\left(\mathbb{T}_{\mathfrak{a}}\hat{\otimes} \mathcal{R}^{\operatorname{tame \ det}}[1/p] \right)}} \mathfrak{p}$. But by Lemma \ref{reducedthreevariable}, $\mathbb{T}_{\mathfrak{a}}\hat{\otimes} \mathcal{R}^{\operatorname{tame \ det}}[1/p]$ is reduced, and the intersection is zero. Thus $b_{i}=0$ for all $i$, and hence $Q'=0$.
\end{proof}

\begin{cor} \label{universalgammafactor}
     For each $\nu \mid N$, there exists a universal $\gamma$-factor $\gamma_{\nu}\left(\rho_{\mathcal{A}} \right)$ in the total quotient ring of $\mathcal{A}$ with $\gamma_{\nu}\left( \rho_{\mathcal{A}}\right) \left( f, \theta^t \left( g \right) \right)= \gamma_{\nu}\left(f,g^{*}, l+t, \vartheta  \right)$ for all $\left( f, \theta^t \left( g \right) \right) \in \Sigma''\left( \mathcal{V},\mathcal{V}^{+}\right)$, where $\gamma_{\nu}\left(f,g^{*}, l+t, \vartheta  \right)$ is the complex $\gamma$-factor at $\nu$ defined in Section \ref{sec:complexfunctionalequation}.
\end{cor}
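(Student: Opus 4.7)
The plan is to derive this corollary as a fairly direct consequence of Theorem \ref{welldefinedgammafactor} combined with the interpolation property of the Helm--Moss universal $\gamma$-factor, with only the matter of compatibility of localisation with specialisation requiring care. The definition itself is immediate: Theorem \ref{welldefinedgammafactor} shows that $\gamma_{\nu}(\rho_{\mathcal{A}}) := \gamma_{\nu}(\rho_{\mathcal{A}},1)$ is a well-defined element of $\widetilde{T}^{-1}\mathcal{A}$. Because every element of $\widetilde{T}$ is a non-zero-divisor in $\mathcal{A}$ (again by Theorem \ref{welldefinedgammafactor}), the natural localisation map $\widetilde{T}^{-1}\mathcal{A} \hookrightarrow \operatorname{Frac}(\mathcal{A})$ is injective, so $\gamma_{\nu}(\rho_{\mathcal{A}})$ lies naturally in the total quotient ring of $\mathcal{A}$, as required.

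To establish the interpolation property at a point $x = (f,\theta^{t}(g)) \in \Sigma''(\mathcal{V},\mathcal{V}^{+})$, I would invoke Theorem 1.1 of \cite{hm17}: the universal $\gamma$-factor $\gamma_{\nu}(\rho_{\mathcal{A}},X)$ specialises along any classical modular point of $\operatorname{Spec}(\mathcal{A})$ to the classical Deligne--Langlands $\gamma$-factor of the corresponding Weil--Deligne representation, twisted by the unramified character sending $\operatorname{Frob}_{\nu}^{-1}$ to $X$. At the point $x$, the specialisation of $\rho_{\mathcal{A}}|_{W_{\nu}}$ is the Weil--Deligne representation attached to $(V_{f} \otimes V_{g}^{*}(1+t))|_{W_{\nu}}$, which by compatibility of local Langlands with Rankin--Selberg convolution is precisely the representation whose classical $\gamma$-factor at $s=l+t$ is $\gamma_{\nu}(f,g^{*},l+t,\vartheta)$ of Section \ref{sec:complexfunctionalequation}. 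Setting $X=1$ then yields the claimed equality.

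The one subtle point—which I expect to be the main obstacle in writing the argument out carefully—is verifying that the specialisation of $\widetilde{T}$ at $x$ consists of non-zero-divisors (in fact, non-zero elements of $\overline{\QQ}_{p}$), so that the diagram
\[
\widetilde{T}^{-1}\mathcal{A} \longrightarrow (\text{specialised localisation at } x)
\]
is well-defined and the operations ``set $X=1$'' and ``specialise at $x$'' commute. This reduces to checking that for any $Q(X) \in \widetilde{\mathcal{T}}$, the value $Q(\sigma_{\nu})(x) \neq 0$ in $\overline{\QQ}_{p}$. But at the classical point $x$, the specialisation of $\sigma_{\nu} = \kappa(\operatorname{Frob}_{\nu})$ is $\varepsilon_{\operatorname{cyc}}^{-t}(\operatorname{Frob}_{\nu}) = \nu^{t}$ (or a similar cyclotomic value, which is certainly a $p$-adic unit since $\nu \ne p$), and then $Q(\nu^{t})$ is non-zero because the leading and trailing coefficients of $Q$ specialise to units in $\overline{\QQ}_{p}$ (as they are units in $\widetilde{\mathcal{A}}$). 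This verification is an immediate analogue, at the level of the specialisation, of Lemma \ref{linearindep}, and combined with the above yields the corollary.
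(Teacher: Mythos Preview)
Your approach is essentially the same as the paper's: the corollary is stated there without proof, being meant to follow directly from Theorem \ref{welldefinedgammafactor} (for well-definedness in the total quotient ring) together with Theorem 1.1 of \cite{hm17} (for the interpolation). Your first two paragraphs reproduce exactly this reasoning.

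However, the argument in your final paragraph contains a genuine error. You correctly identify a point the paper glosses over---namely, that one needs the denominators in $\widetilde{T}$ not to vanish upon specialisation at $x$, so that ``set $X=1$'' and ``specialise at $x$'' commute. But your proposed justification is wrong: the fact that the leading and trailing coefficients of $Q_x(X)$ are units in $\overline{\QQ}_p$ in no way forces $Q_x(\nu^{t}) \neq 0$. For instance, $Q(X) = X - c$ with $c \in \widetilde{\mathcal{A}}^{\times}$ specialising to $\nu^{t}$ gives $Q_x(\nu^{t}) = 0$. Lemma \ref{linearindep} does not help here either: that lemma concerns non-vanishing in $A\hat{\otimes}\Lambda$ where $\sigma_\nu$ is transcendental over $A$, whereas after specialising at $x$ you are evaluating at an algebraic number $\nu^{t}$, and the argument collapses.

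The honest resolution is not to control \emph{all} of $\widetilde{T}$ but only the \emph{specific} denominator appearing in $\gamma_\nu(\widetilde{\rho_{\mathcal{A}}},X)$: by the Helm--Moss construction this denominator specialises at $x$ to (a factor of) the inverse local $L$-factor $P_\nu(f,g^*,X)$, and one needs this not to vanish at the relevant value of $X$. This is exactly the statement that the local Euler factor $L_\nu(f,g^*,l+t)$ has no pole, which holds at the points of $\Sigma''$ in the critical range (and certainly on a Zariski-dense subset, which is all that is required for the functional equation in Theorem \ref{thm:p-adicfunctionalequation}). The paper does not spell this out, but your attempt to bypass it via a generic non-vanishing argument does not work as written.
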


\subsection{The functional equation}
\begin{lemma} \label{lem:densitycrystalline}
    The subset $\Sigma''\left( \mathcal{V}, \mathcal{V}^{+}\right) \subset \Sigma\left( \mathcal{V},\mathcal{V}^{+}\right)$ is dense in $\operatorname{Spec}\left( \mathbb{T}_{\mathfrak{a}}\right)\times \operatorname{Spec}\left( \mathcal{R}^{\operatorname{univ}}\right)$.

\end{lemma}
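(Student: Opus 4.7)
The plan is to reduce the density of $\Sigma''(\mathcal{V}, \mathcal{V}^+)$ to two independent density statements --- one on each factor of the product $\operatorname{Spec}(\mathbb{T}_{\mathfrak{a}}) \times \operatorname{Spec}(\mathcal{R}^{\operatorname{univ}})$ --- and then to combine them by exploiting the freedom to let $k$ grow arbitrarily large so that the coupling inequality $0 \le t \le k-l-1$ becomes trivially satisfiable.

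First, I would show that for every integer $N$ and every non-empty Zariski open $U \subset \operatorname{Spec}(\mathbb{T}_{\mathfrak{a}})$, there exists a crystalline classical specialisation $f \in U$ of weight $k \ge N$. Since $\mathbb{T}_{\mathfrak{a}}$ is finite (and flat) over $\Lambda$, it suffices to see that the integer characters $x \mapsto x^{k-1}$ with $k \ge N$ are Zariski dense in $\operatorname{Spec}(\Lambda)$; this is standard, since any continuous character of $1+p\zpa$ is a $p$-adic limit of such integer characters along an arithmetic progression modulo $(p-1)p^n$. Above each such arithmetic character the primitive Hida family produces an ordinary $p$-stabilisation of a weight-$k$ newform of level $N_1$ prime to $p$, giving a crystalline classical $f$.

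Second, I would prove the analogous density for $\operatorname{Spec}(\mathcal{R}^{\operatorname{univ}})$: the crystalline nearly-classical points $\theta^t(g)$, with $g$ crystalline classical of weight $l \ge 1$ and $t \ge 0$ integer, form a Zariski dense subset. Using the factorisation $\mathcal{R}^{\operatorname{univ}} \cong \mathcal{R}^{\operatorname{tame\ det}} \hat{\otimes}_{\mathcal{O}} \Lambda$ from Lemma \ref{threevariable}, this splits into density of integer twists in $\operatorname{Spec}(\Lambda)$ (same argument as above) and density of classical newforms of level $N_2$ prime to $p$ in $\operatorname{Spec}(\mathcal{R}^{\operatorname{tame\ det}})$, the latter being a consequence of the $R=T$ isomorphism of Theorem \ref{r=t} together with the standard density of classical eigenforms of prime-to-$p$ level in the relevant Hecke algebra. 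Assembling the two steps: given any non-empty open $U \times V$ in the product, Step 2 supplies a crystalline $\theta^{t_0}(g_0) \in V$ with fixed integers $l_0 \ge 1$ and $t_0 \ge 0$; holding these fixed, Step 1 supplies a crystalline classical $f \in U$ of weight $k \ge l_0 + t_0 + 1$, so that $(f, \theta^{t_0}(g_0)) \in \Sigma''(\mathcal{V}, \mathcal{V}^+) \cap (U \times V)$.

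I expect the main technical obstacle to lie in the crystallinity refinement in Step 2: density of all classical specialisations in $\mathcal{R}^{\operatorname{univ}}$ is a routine consequence of $R=T$, but restricting to the prime-to-$p$ level (equivalently, crystalline at $p$) specialisations requires isolating these among all classical points in $\mathbb{T}_{\mathfrak{n}}$. This should follow from the fact that the $p$-adic modular forms of tame level $N_2$ are $p$-adically spanned by classical newforms of level $N_2$ itself as the weight varies, which is a consequence of Serre's theory of $p$-adic modular forms applied to the Hecke module $S(N_2, \mathcal{O})$; combined with the reducedness properties established in Lemma \ref{reducedthreevariable}, this yields the required Zariski density.
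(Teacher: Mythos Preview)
Your overall architecture matches the paper's exactly: prove density of crystalline specialisations on each factor separately, then exploit the freedom to take $k$ arbitrarily large to handle the coupling constraint $0\le t\le k-l-1$.

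There is one genuine gap. In Step 2 you propose to use the factorisation $\mathcal{R}^{\operatorname{univ}}\cong\mathcal{R}^{\operatorname{tame\ det}}\hat\otimes\Lambda$ and ``split'' density into the two factors. This does not work as stated: a crystalline $g$ of weight $l>1$ has $\det\rho_g=\epsilon\cdot\varepsilon_{\operatorname{cyc}}^{1-l}$, whose restriction to wild inertia at $p$ is the nontrivial character $\langle\,\cdot\,\rangle^{1-l}$, so $\rho_g$ is \emph{not} a point of $\mathcal{R}^{\operatorname{tame\ det}}$. Under the isomorphism of Lemma~\ref{threevariable}, the point $\theta^t(g)$ lands at $(\rho_g\otimes\varepsilon_{\operatorname{cyc}}^{-t}\otimes\tau^{-1/2},\,\tau)$ with $\tau=\langle\varepsilon_{\operatorname{cyc}}\rangle^{1-l-2t}$; the first coordinate is a genuine twist of $\rho_g$, not $\rho_g$ itself, and the two coordinates are coupled through $l$. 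So ``density in each factor'' does not drop out for free.

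You effectively recognise this in your final paragraph, where you abandon the factorisation and instead appeal to the fact that classical forms of prime-to-$p$ level span $S(N_2,\mathcal{O})$ $p$-adically. That is the correct route, and it is precisely what the paper does: it invokes \cite[Lemma~3]{gou90} (via Theorem~\ref{r=t} and semi-locality of $\mathbb{T}_0$) to get density of crystalline $g$ in $\operatorname{Spec}(\mathcal{R}^{\operatorname{univ}})$ directly, with no detour through $\mathcal{R}^{\operatorname{tame\ det}}$. The paper also takes $t=0$ throughout, which suffices.

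One smaller point on Step 1: your claim that every integer-weight specialisation of the primitive Hida family is the $p$-stabilisation of a level-$N_1$ newform needs a word of justification. The paper makes this explicit: the diamond action of $(\mathbb{Z}/p\mathbb{Z})^\times$ on $\mathbb{T}_{\mathfrak{a}}$ is via a fixed power $\omega^i$, so weights $k\equiv i\pmod{p-1}$ force trivial $p$-nebentypus, and then \cite{GOUVEA1992178} gives crystallinity (the running hypothesis $\epsilon_{1,p}/\epsilon_{2,p}\ne\overline\varepsilon_{\operatorname{cyc}}^{\pm1}$ rules out Steinberg specialisations). Your density-in-$\Lambda$ argument is fine once restricted to this arithmetic progression.
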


\begin{proof}
    We first prove that crystalline points (of all weights) are dense in $\operatorname{Spec}\left( \mathcal{R}^{\operatorname{univ}}\right)$.
    By Theorem \ref{r=t}, $\mathcal{R}^{\operatorname{univ}}$ is isomorphic to the local Hecke algebra $\mathbb{T}_{\mathfrak{n}}$, which is the completion of $\mathbb{T}_{N_2}$ at the maximal ideal ideal corresponding to $\overline{\rho}_{2}$, so it suffices to prove that crystalline points are dense in $\operatorname{Spec}\left( \mathbb{T}_{\mathfrak{n}}
    \right)$. By Theorem 2.7 of \cite{emer11},\rev{$\mathbb{T}_{N_2}=\prod\limits_{\mathfrak{m}}(\mathbb{T}_{N_2})_{\mathfrak{m}}$} is semi-local, and can be decomposed as a product of finitely many local factors indexed by its maximal ideals, one of which is $\mathbb{T}_{\mathfrak{n}}$. Thus to prove that crystalline points are dense in $\operatorname{Spec}\left( \mathbb{T}_{\mathfrak{n}}\right)$, it suffices to prove they are dense in $\operatorname{Spec}\left( \mathbb{T}_{N_2}\right)$. But this readily follows from Lemma 3 of \cite{gou90} (second part, taking $\nu=0$).

     Now we establish that the set $\Sigma''\left( \mathcal{V}, \mathcal{V}^{+}\right)$ is dense in $\operatorname{Spec}\left( \mathbb{T}_{\mathfrak{a}}\right)\times \operatorname{Spec}\left( \mathcal{R}^{\operatorname{univ}}\right) $. In light of above, it suffices to prove for every crystalline $g$ with weight $l \geq 1$, there exist infinitely many $f \in \operatorname{Spec}\left( \mathbb{T}_{\mathfrak{a}}\right)$ with $\left( f,\theta^{t}\left( g\right)\right) \in \Sigma''\left( \mathcal{V},\mathcal{V^{+}}\right)$.

    By our construction of $\mathbb{T}_{\mathfrak{a}}$, we can find an integer $i \in \mathbb{Z}/\left(p-1 \right)\mathbb{Z}$ such that the the group $\left( \mathbb{Z}/p\mathbb{Z}\right)^{\times} $ acts on $\mathbb{T}_{\mathfrak{a}}$ (via diamond operators) as $\omega^{i}$. In particular, if the weight $k$ of $f$ satisfies $k \equiv i \pmod{ p-1}$, then $f$ has trivial $p$-nebentypus. \rev{Without loss of generality we may assume $k \geq 3$, since we only care about the infinitude of $k$.} It was shown in \cite[p. 183, Section level $N$ versus $Np$]{GOUVEA1992178} that $f$ is automatically crystalline. Thus we see that for each fixed crystalline $g$ of weight $l$, there exist crystalline $f$ of arbitrary high weight $k$ such that $\left( f, \theta^t \left( g \right) \right) \in \Sigma''\left( \mathcal{V},\mathcal{V}^{+}\right)$. The Lemma now follows.
\end{proof}

\begin{thm} \label{thm:p-adicfunctionalequation} We have
    \begin{align*}
        \mathcal{L}= N^{2\left( \boldsymbol{k}_{1}-\boldsymbol{k}_{2} -1 \right)} \gamma\left(\rho_{\mathcal{A}} \right) \mathcal{L}',
    \end{align*}
        where $\gamma\left( \rho_{\mathcal{A}}\right)= \prod\limits_{\nu \mid N}\gamma_{\nu}\left( \rho_{\mathcal{A}}\right)$.
\end{thm}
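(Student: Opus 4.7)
The plan is to verify the identity pointwise on the Zariski-dense subset $\Sigma''(\mathcal{V},\mathcal{V}^+)\subset \operatorname{Spec}(\mathbb{T}_{\mathfrak{a}})\times \operatorname{Spec}(\mathcal{R}^{\operatorname{univ}})$ of crystalline classical specialisations (density being Lemma \ref{lem:densitycrystalline}), and then promote it to an equality in $\operatorname{Frac}(\mathbb{T}_{\mathfrak{a}})\hat{\otimes}\mathcal{R}^{\operatorname{univ}}$. All the factors on both sides are genuine elements of the total ring of fractions of $\mathcal{A}$: the meromorphy of the universal $\gamma$-factor $\gamma(\rho_{\mathcal{A}})$ is Corollary \ref{universalgammafactor}, while $N^{2(\boldsymbol{k}_1-\boldsymbol{k}_2-1)}$ is a unit since $\gcd(N,p)=1$. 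Combined with reducedness results for $\mathcal{A}$ of the type proved in Lemma \ref{reducedthreevariable}, this reduces the theorem to checking the identity at a dense set of specialisations.

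Fix $(f,\theta^t g)\in\Sigma''(\mathcal{V},\mathcal{V}^+)$ and form the ratio $\mathcal{L}(f,\theta^t g)/\mathcal{L}'(f,\theta^t g)$ using Theorems \ref{thm:interpolation formula} and \ref{thm:dualinterpolationformula}. Crystallinity of $f,g$ forces $a=b=0$, so the Atkin--Lehner correction $\lambda_{p^b}(g)(p^{t+1}/\alpha)^b$, the nebentype-at-$p$ prefactors $\psi_p^{-1}(N)$ and $\epsilon_p(-N)$ all trivialise, and $\langle f,f\rangle_{N_1p^a}=\langle f,f\rangle_{N_1}=\langle f^*,f^*\rangle_{N_1}$. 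The common scalars $i^{k-l-2t}$ and $2^{1-k}$ cancel. What remains in $\mathcal{L}/\mathcal{L}'$ is a power of $N$, a ratio of $\Lambda^{[pN]}$-values twisted by a ratio of modified $p$-Euler factors, and the ratio of adjoint Euler factors $\mathcal{E}(f^*_\beta)\mathcal{E}^*(f^*_\beta)/\mathcal{E}^{\operatorname{ad}}(f_\alpha)$.

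The central step is Proposition \ref{modifiedeulerfactor}: combining the ratio of modified $p$-Euler factors with the ``invisible'' $p$-Euler factors $P_p(f,g^*,l+t)$ and $P_p(f^*,g,k-1-t)$ via the identity $\Lambda^{[pN]}=\Lambda^{[N]}\cdot P_p$, the $\Lambda^{[pN]}$-values transform into $\Lambda^{[N]}$-values. The $N$-depleted complex functional equation recalled in Section \ref{sec:complexfunctionalequation} then identifies the ratio $\Lambda^{[N]}(f,g^*,l+t)/\Lambda^{[N]}(f^*,g,k-1-t)$ with the product $\prod_{\nu\mid N}\gamma_\nu(f,g^*,l+t,\vartheta)$, and Corollary \ref{universalgammafactor} recognises this as precisely the specialisation of $\gamma(\rho_{\mathcal{A}})$ at $(f,\theta^t g)$.

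What remains is arithmetic bookkeeping of the residual power of $N$ together with the ratio of adjoint Euler factors, which must assemble to $N^{2(k-l-2t-1)}$ — the value of $N^{2(\boldsymbol{k}_1-\boldsymbol{k}_2-1)}$ at $(f,\theta^t g)$, recalling that $\boldsymbol{k}_2$ evaluates to $l+2t$ under the cyclotomic twist by $\theta^t$. This is the main technical hurdle: one must carefully separate the $N$-powers hidden inside $\prod_{\nu\mid N}\gamma_\nu(l+t)$ (arising through the local conductor exponents and $\epsilon$-factor normalisations) from the explicit $N$-prefactors in the two interpolation formulas, and evaluate $\mathcal{E}(f^*_\beta)\mathcal{E}^*(f^*_\beta)/\mathcal{E}^{\operatorname{ad}}(f_\alpha)$ in closed form via the $a=0$ clause $\mathcal{E}^{\operatorname{ad}}(f_\alpha)=(1-\beta/\alpha)(1-\beta/(p\alpha))$ and the analogous identity for the non-ordinary $\beta$-stabilisation $f^*_\beta$ of $f^*$. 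Once this calculation concludes the pointwise identity at every point of $\Sigma''(\mathcal{V},\mathcal{V}^+)$, Zariski density extends it to the claimed equality in $\operatorname{Frac}(\mathbb{T}_{\mathfrak{a}})\hat{\otimes}\mathcal{R}^{\operatorname{univ}}$.
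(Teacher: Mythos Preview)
Your approach is correct and essentially identical to the paper's: specialise both sides on $\Sigma''(\mathcal{V},\mathcal{V}^+)$, use the two interpolation formulae together with Proposition~\ref{modifiedeulerfactor} and the $N$-depleted complex functional equation, identify the product of local $\gamma$-factors with the specialisation of $\gamma(\rho_{\mathcal{A}})$ via Corollary~\ref{universalgammafactor}, and extend by Zariski density (Lemma~\ref{lem:densitycrystalline}).

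The one point to correct is your description of the ``main technical hurdle''. The final bookkeeping is much simpler than you suggest, for two reasons. First, there is no need to ``separate the $N$-powers hidden inside $\prod_{\nu\mid N}\gamma_\nu(l+t)$'': those powers are part of $\gamma(\rho_{\mathcal{A}})$, which appears \emph{as is} on the right-hand side of the theorem, so they never need to be extracted. The explicit power $N^{2(k-l-2t-1)}$ arises purely from the ratio of the prefactors $N^{2+2t-k+l}$ and $N^{k-l-2t}$ in Theorems~\ref{thm:interpolation formula} and~\ref{thm:dualinterpolationformula}. Second, the period-and-adjoint ratio is not a hurdle but an identity: in the crystalline case $\mathcal{E}^{\mathrm{ad}}(f_\alpha)=\mathcal{E}(f_\alpha)\mathcal{E}^*(f_\alpha)$ by definition, and one has $\mathcal{E}(f^*_\beta)\mathcal{E}^*(f^*_\beta)\langle f^*,f^*\rangle_{N_1}=\mathcal{E}(f_\alpha)\mathcal{E}^*(f_\alpha)\langle f,f\rangle_{N_1}$ directly (the paper cites \cite[Proposition~5.4.4]{LLZ14}). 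So the ratio is exactly $1$, and no closed-form evaluation of the individual factors is needed.
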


\begin{proof}
    We first establish that  $$\mathcal{L}\left( f, \theta^t \left( g \right) \right) = N^{2\left( k-l-2t-1\right)} \left(\gamma\left( \rho_{\mathcal{A}}\right) \cdot \mathcal{L}'  \right) \left( f, \theta^t \left( g \right) \right)$$ for all $\left( f, \theta^t \left( g \right) \right) \in \Sigma''\left( \mathcal{V},\mathcal{V}^{+}\right)$.

    By the interpolation formulae of Theorem \ref{thm:interpolation formula} and Theorem \ref{thm:dualinterpolationformula},
     it is immediate that the factor $i^{k-l-2t}2^{1-k}$ will cancel out, and we obtain the power $N^{2\left( k-l-2t-1\right)}$ on the right hand side of the equation.

    By Proposition \ref{modifiedeulerfactor}, the ratios $\dfrac{P_{p}\left( g^{*}, p^{-l-t} \beta\right)}{P_{p}\left(g, \beta^{-1} p^{t}\right)}$ and $\dfrac{P_{p}\left( g, p^{t} \alpha^{-1}\right)}{P_{p}\left(g^{*}, \alpha p^{-l-t}\right)}$ also disappear, and we are left with $$
     \dfrac{\Lambda^{[N]}( f^{*}, g, k-1-t)}{\mathcal{E}\left( f^{*}_{\beta}\right) \mathcal{E}^*\left( f^{*}_{\beta}\right)\langle f^{*}, f^{*} \rangle_{N_{1}}} \thicksim \dfrac{\Lambda^{[N]}(f, g^{*}, l+t)}{\mathcal{E}\left( f_{\alpha}\right) \mathcal{E}^*\left( f_{\alpha}\right)\langle f, f \rangle_{N_{1}}}
    $$ to compare.

    It follows readily from our analysis in Section \ref{sec:complexfunctionalequation}, combined with Corollary \ref{universalgammafactor}, that $$
   \gamma\left(\rho_{\mathcal{A}} \right) \left( f, \theta^t \left( g \right) \right)\Lambda^{[N]}( f^{*}, g, k-1-t)=\Lambda^{[N]}(f, g^{*}, l+t).$$

   On the other hand, the period terms $\mathcal{E}\left( f^{*}_{\beta}\right) \mathcal{E}^*\left( f^{*}_{\beta}\right)\langle f^{*}, f^{*} \rangle_{N_{1}}$ and $\mathcal{E}\left( f_{\alpha}\right) \mathcal{E}^*\left( f_{\alpha}\right)\langle f, f \rangle_{N_{1}}$ \rev{are} also equal. This follows directly from our definitions of $\mathcal{E}\left( \cdot\right)$ and $\mathcal{E}^{*}\left( \cdot\right)$; details can be found in the proof of \cite[Proposition 5.4.4]{LLZ14}. Thus we obtain the desired equality
    $$\mathcal{L}\left( f, \theta^t \left( g \right) \right) = N^{2\left( k-l-2t-1\right)} \left(\gamma\left( \rho_{\mathcal{A}}\right) \cdot \mathcal{L}'  \right) \left( f, \theta^t \left( g \right) \right)$$ for all $\left( f, \theta^t \left( g \right) \right) \in \Sigma''\left( \mathcal{V},\mathcal{V}^{+}\right)$.

    To prove the theorem, we note that by Lemma \ref{lem:densitycrystalline}, the set $\Sigma''\left( \mathcal{V},\mathcal{V}^{+}\right)$ is Zariski dense in $\operatorname{Spec}\left( \mathbb{T}_{\mathfrak{a}}\right) \times \operatorname{Spec}\left( \mathcal{R}^{\operatorname{univ}}\right)$. The functions $\mathcal{L}$ and $\mathcal{L}'$ (defined over $I^{-1}_{\mathfrak{a}} \otimes_{\mathbb{T}_{\mathfrak{a}}} \mathcal{A}$) are meromorphic functions on $\operatorname{Spec}\left( \mathbb{T}_{\mathfrak{a}}\right) \times \operatorname{Spec}\left( \mathcal{R}^{\operatorname{univ}}\right)$, and their poles are disjoint from $\Sigma''\left( \mathcal{V},\mathcal{V}^{+}\right)$. Hence we must have \begin{align*}
        \mathcal{L}= N^{2\left( \boldsymbol{k}_{1}-\boldsymbol{k}_{2} -1 \right)} \gamma\left(\rho_{\mathcal{A}} \right) \mathcal{L}',
    \end{align*} by continuity of $\mathcal{L}$ and $\mathcal{L}'$.
\end{proof}

\section{Proof of Theorem \ref{thm:interpolation formula}}

To unify notations, we let $f_{\beta}$ be the $p$-stablisation of $f$ at the root $\beta$ if $f$ is crystalline, and be $f \otimes\psi_{p}^{-1}$  otherwise. Let $p^{a}$ be the exact level of $f$ at $p$.

Recall $\psi$ and $\psi_{p}$ (resp. $\epsilon$ and $\epsilon_{p}$) are prime-to-$p$ and $p$-part characters of $f$ (resp. $g$), and $g$ is new at level $p^{b}$.

We need to evaluate the linear functional  \begin{align*}
    \lambda_{f_{\alpha}^{c}}\left( \operatorname{Hol}\left(\delta_{l}^{t}\left(g_{\epsilon^{-1}}^{[pN]}  \right) \cdot F^{[p]}_{k-l-2t,\psi_{p}\epsilon_{p}^{-1} } \right) \right) = \left( -1\right)^{t}\lambda_{f_{\alpha}^{c}}\left(
 \operatorname{Hol}\left(
 g_{\epsilon^{-1}}^{[pN]}   \cdot \delta^{t}_{k-l-2t}\left(F^{[p]}_{k-l-2t,\psi_{p}\epsilon_{p}^{-1} } \right)  \right) \right).
\end{align*}
It is given by a ratio of Petersson inner products:

\begin{lemma} \label{linearfunctionalproof}
For all $n \geq \max\{a,1\}$ and $h \in \mathcal{S}_{k}\left( \Gamma_{1}\left( Np^{n}\right) \right)$, we have $$\lambda_{f_{\alpha}^{c}}( h)=
\begin{cases}
    \left( \frac{\psi\left(p\right)}{\alpha p } \right)^{n-1}(p-1)^{-1}   \cdot \dfrac{\langle f_n,h \rangle_{Np^{n}} }{\langle f', f_{\alpha}^{c} \rangle_{N_{1}\left(p\right)} } &\text{if $a = 0$},\\
    \left( \frac{\psi\left(p\right)}{\alpha p} \right)^{n-a} \cdot \dfrac{\langle f_n,h \rangle_{Np^{n}} }{\langle f', f_{\alpha}^{c} \rangle_{N_{1}p^{a}} } &\text{otherwise},
\end{cases}
$$ where $f'= \begin{cases}
     W_{N_{1}p} \left( f_{\beta} \right) &\text{if $a=0$}, \\
     W_{N_{1}p^{a}} \left( f_{\beta} \right) &\text{otherwise},
\end{cases}$ and $f_{n} = \begin{cases}
    f' \mid_k \begin{psmallmatrix}
p^{n-1} &0 \\ 0 & 1
\end{psmallmatrix}  \quad \text{ if a=0} \\
f' \mid_k \begin{psmallmatrix}
p^{n-a} &0 \\ 0 & 1
\end{psmallmatrix} \text{\quad otherwise}.
\end{cases}  $
\end{lemma}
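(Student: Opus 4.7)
The plan is to prove the formula in two stages: first reduce the claim to the base level $N_1p^{\max(a,1)}$ via the definition $\lambda_{\mathcal{F}^c} = \lambda'_{\mathcal{F}^c} \circ \operatorname{Tr}_{N_1}^N$ and the adjointness of the trace operator with respect to the Petersson inner product, and second verify the base-level formula using the standard Hida-theoretic description of the functional $\lambda'_{f_\alpha^c}$ obtained by specialising the functional $\lambda'_{\mathcal{F}^c}$ of \cite[\S 7.7]{klz17}.

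For the base case, recall that $\lambda'_{\mathcal{F}^c}$ sends the $c$-twisted Hida family $\mathcal{F}^c$ to $1$, and that its specialisation $\lambda'_{f_\alpha^c}$ at a classical point admits the well-known description as a normalised Petersson pairing against the Atkin--Lehner image of the non-ordinary companion form. Concretely, setting $f' = W_{N_1 p^{\max(a,1)}}(f_\beta)$, the functional $\lambda'_{f_\alpha^c}$ is uniquely characterised on the ordinary part $e\mathcal{S}_k(\Gamma_1(N_1p^{\max(a,1)}))$ by the formula
\[
  \lambda'_{f_\alpha^c}(\phi) \;=\; \frac{\langle f', \phi\rangle_{N_1p^{\max(a,1)}}}{\langle f', f_\alpha^c\rangle_{N_1p^{\max(a,1)}}}.
\]
When $a = 0$ the form $f_\alpha^c$ lies in a strictly larger space than the newspace, so the standard $R = \operatorname{End}$-pairing on the new quotient picks up a factor of $|\mathcal{G}| = p-1$ from summing over cosets of $\Gamma_1(N_1p)$ in its normaliser (this is the source of the $(p-1)^{-1}$); when $a \geq 1$ the newform itself already sits at level $N_1p^a$ and no such factor appears. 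Composing with $\operatorname{Tr}_{N_1}^N$ and using $\langle \operatorname{Tr}_{N_1}^N h, f'\rangle_{N_1p^n} = \langle h, f'\rangle_{Np^n}$ (valid for any $h$ at level $Np^n$ and $f'$ at level $N_1p^{\max(a,1)}$, after transporting $f'$ via the inclusion to level $Np^n$) reduces the identity to a computation purely at level $Np^n$.

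The remaining step is to pass from level $N_1p^{\max(a,1)}$ to level $N_1p^n$ via the matrix $V_m = \begin{psmallmatrix} p^m & 0\\ 0 & 1\end{psmallmatrix}$ with $m = n - \max(a,1)$. Writing $f_n = f'|_k V_m$, the key input is the formula $\langle \phi|_k V_m, h\rangle_{Np^n} = p^{-m(k-1)}\langle \phi, h|_k V_m^{-1}\rangle$ combined with the identity $U_p(h|_k V_m^{-1}) = \text{(something involving } h|_k V_{m-1}^{-1})$; after normalising by powers of $p$ this precisely yields a factor of $(\alpha p/\psi(p))^m$, because $f_\alpha^c$ has $U_p$-eigenvalue $\psi(p)/\alpha$ on the $c$-twist (equivalently, $f'$ scales by this amount under the inverse of the matrix action). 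Inverting this gives the factor $(\psi(p)/(\alpha p))^m$ in the statement. The main obstacle is a careful bookkeeping of these constants: matching the KLZ normalisation of $\lambda'_{\mathcal{F}^c}$ (which uses $\mathcal{F}^c$, not $\mathcal{F}$), tracking the $\psi(p)^{-1}$ factor that distinguishes $f_\alpha$ from $f_\alpha^c$, and isolating the $(p-1)^{-1}$ factor in the $a=0$ case from the coset calculation. Once the constants are verified on both sides for a single convenient test vector (e.g.\ $h = f_\alpha^c$ itself, checking the normalisation $\lambda_{f_\alpha^c}(f_\alpha^c) = 1$), the general case follows by linearity.
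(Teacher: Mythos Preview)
Your plan and the paper's proof share the same core mechanism, but you hide it where the paper makes it explicit. The paper's argument is essentially one observation: $f_n = f_\beta \mid W_{N_1 p^n}$ is an eigenform for the \emph{transpose} Hecke operators at level $Np^n$, with eigenvalues complex-conjugate to those of $f_\alpha^c$; hence $h \mapsto \langle f_n, h\rangle_{Np^n}$ automatically factors through the $f_\alpha^c$-isotypic projection and is therefore a scalar multiple of $\lambda_{f_\alpha^c}$. The scalar is then pinned down by evaluating at $h = f_\alpha^c$ and using the adjointness $\langle f' \mid \begin{psmallmatrix} p^{n-a} & 0 \\ 0 & 1\end{psmallmatrix}, f_\alpha^c\rangle_{N_1 p^a(p^{n-a})} = \langle f', U_p^{n-a} f_\alpha^c\rangle_{N_1 p^a}$. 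Your step~2 asserts the base-level formula $\lambda'_{f_\alpha^c}(\phi) = \langle f', \phi\rangle / \langle f', f_\alpha^c\rangle$ as ``well-known'', but that formula \emph{is} the transpose-eigenform fact (specialised to $n = \max(a,1)$), and you have not justified it: without it there is no reason the pairing against $f'$ should annihilate the other Hecke eigenspaces. So the genuine content of the lemma sits in the step you skipped, not in the trace/level-change bookkeeping you emphasise.

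Two smaller points also need correction. First, your explanation of the $(p-1)^{-1}$ in the $a=0$ case is off: it is not a normaliser coset count but simply the index ratio $|(\mathbb{Z}/p^a\mathbb{Z})^\times| / |(\mathbb{Z}/p^n\mathbb{Z})^\times|$, which equals $p^{a-n}$ for $a \geq 1$ but $((p-1)p^{n-1})^{-1}$ for $a = 0$. Second, the $U_p$-eigenvalue of $f_\alpha^c$ is $\psi(p)^{-1}\alpha$, not $\psi(p)/\alpha$ (the $c$-twist is by $\psi^{-1}$ and $p \nmid N$; note also that $|\psi(p)/\alpha|_p > 1$, so a form with that eigenvalue could not be ordinary). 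Your derivation of the factor $(\psi(p)/(\alpha p))^{n-a}$ only lands on the right answer through a compensating confusion about which direction the $V_{p^m}$/$U_p^m$ adjointness runs.
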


\begin{proof}

    We observe that $f_{n}=f_{\beta}\mid W_{N_{1}p^{n}}$, and $f_{n}$ is an eigenform for the transpose Hecke operators, with eigenvalues being the complex conjugates of those of $f_{\alpha}^{c}$. Consequently, we must have \begin{align*}
        \lambda_{f_{\alpha}^{c}}\left( h\right)= \dfrac{\mid   \left( \mathbb{Z}/p^{a}\right)^{\times}\mid   }{\mid \left(
 \mathbb{Z}/p^{n}\right)^{\times} \mid   } \cdot \dfrac{\langle f_{n},h\rangle_{Np^{n}}}{\langle f_{n},f_{\alpha}^{c}\rangle_{N_{1}p^{a}\left( p^{n-a}\right)}}
    \end{align*} for all $h$ at level $Np^{n}$.

    The result follows using the relation
    \[
        \left\langle f' \mid \begin{psmallmatrix}
            p^{n-a} & 0 \\ 0 & 1
        \end{psmallmatrix},f_{\alpha}^{c}  \right\rangle_{N_{1}p^{a}\left( p^{n-a}\right)}
        =\langle f',f_{\alpha}^{c}\mid U_{p}^{n-a}\rangle_{N_{1} p^{a}}. \qedhere
    \]
\end{proof}

\begin{remark}
    When $f$ is crystalline and $h$ is invariant under diamond actions at $p$, this is \cite[Appendix, Step 1]{loe18}.
\end{remark}

The denominator term is explicitly given by :

\begin{lemma}
\label{linearfunctionaldenominator}
If $f$ is crystalline, then
\[ \langle f', f_{\alpha}^{c} \rangle_{N_{1}(p)} =  \frac{\overline{\lambda_{N_{1}}\left(f \right)}\alpha \mathcal{E}\left(f_{\alpha} \right) \mathcal{E}^{*}\left(f_{\alpha} \right)  } {\psi\left(p\right)} \cdot \langle f, f \rangle_{N_{1}}.\]
Otherwise, we have
\[ \langle f', f_{\alpha}^{c} \rangle_{N_{1}p^{a}}=\begin{cases}
    -\overline{\lambda_{N_{1}}\left( f_{\beta}\right)} \cdot \dfrac{p^{k-2}}{\alpha}\cdot \langle f,f\rangle_{N_{1}p^{a}} &\text{if }a=1, \psi=\operatorname{id}, \\[2mm]
    \overline{\lambda_{N_{1}}\left( f_{\beta}\right)}\left( \dfrac{p^{k-2}}{\alpha}\right)^{a}G\left( \psi_{p}\right)\langle f,f\rangle_{N_{1}p^{a}}  &\text{otherwise. }
\end{cases}\]
\end{lemma}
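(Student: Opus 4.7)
The overall strategy is to evaluate the pairing $\langle f',\, f_\alpha^c\rangle = \langle f_\beta\mid W_{N_1 p^{\max(a,1)}},\, f_\alpha^c\rangle$ by first expanding $f_\beta \mid W$ in a convenient Hecke basis, and then invoking standard formulas for the Petersson inner products of $p$-stabilised newforms which relate them to $\langle f, f\rangle_{N_1}$ (respectively $\langle f, f\rangle_{N_1 p^a}$) up to an adjoint-Euler-factor correction. The crystalline ($a=0$) and non-crystalline ($a\ge 1$) cases require different expansions, because $f_\beta$ is an oldform in the first but (a naive twist of) a newform in the second.

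In the crystalline case, $f$ is a newform of level $N_1$ coprime to $p$ and $f_\beta = f - \alpha\, f\mid V_p$ at level $N_1 p$. I would decompose $W_{N_1 p} = W_{N_1}\cdot W_p$ in the conventions of \cite[\S 2.5, \S 2.7]{klz17}. The identity $f\mid W_{N_1} = \lambda_{N_1}(f)\, f^c$, combined with the fact that $W_{N_1}$ commutes with $V_p$, yields
\[ f_\beta\mid W_{N_1} = \lambda_{N_1}(f)\bigl(f^c - \beta\, f^c\mid V_p\bigr). \]
One then applies the standard formula for the action of $W_p$ at level $N_1 p$ on the two oldforms $f^c$ and $f^c\mid V_p$ (expressible as an explicit $2\times 2$ matrix with entries in $\mathbb{Z}[\alpha,\beta,\psi(p),p^{\pm}]$). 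Pairing the resulting expression against $f_\alpha^c$ and invoking the standard norm formula $\langle f_\alpha^c, f_\alpha^c\rangle_{N_1 p} = \mathcal{E}(f_\alpha)\mathcal{E}^*(f_\alpha)\langle f,f\rangle_{N_1}$ (cf.\ the computation in \cite[Prop.\ 5.4.4]{LLZ14}), together with the analogous formula for $\langle f_\beta^c, f_\alpha^c\rangle_{N_1 p}$, yields the stated expression after simplification. The three accessory factors assemble as follows: $\overline{\lambda_{N_1}(f)}$ appears from $\lambda_{N_1}(f)$ combined with the antilinearity of the Petersson pairing in the second slot (using $\lambda_{N_1}(f^c) = \overline{\lambda_{N_1}(f)}$), the $\alpha$-factor from the $W_p$-swap entries, and $\psi(p)^{-1}$ from the character twist.

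In the non-crystalline case, $f$ is new at level $N_1 p^a$ with full nebentype $\psi\psi_p$, and $f_\beta = f\otimes\psi_p^{-1}$ is the naive twist. Here the full Atkin--Lehner pseudo-eigenvalue is directly available: $f\mid W_{N_1 p^a} = \lambda_{N_1 p^a}(f)\cdot f\otimes(\psi\psi_p)^{-1}$, and strict multiplicativity gives $\lambda_{N_1 p^a}(f) = \lambda_{N_1}(f_\beta)\lambda_{p^a}(f)$. The remaining task is to convert between the naive twist $f_{\psi_p^{-1}}$ and the newform twist via the classical Gauss-sum identity
\[ f_{\psi_p^{-1}} = \tfrac{1}{G(\psi_p)}\sum_{u\bmod p^c}\psi_p(u)\, f\mid\begin{psmallmatrix}1& u/p^c\\0&1\end{psmallmatrix}, \]
where $p^c$ is the conductor of $\psi_p$; conjugating through $W_{N_1 p^a}$ and re-summing introduces $G(\psi_p)$, and combining with the evaluation of $\lambda_{p^a}(f)$ read off from the local Weil--Deligne representation of $f$ at $p$ (producing the $(p^{k-2}/\alpha)^a$ factor) yields the third case. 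The Steinberg sub-case $\psi_p = \operatorname{id}$, $a = 1$ must be isolated, since the Gauss-sum identity degenerates: one computes directly with the explicit Steinberg new vector, obtaining $-1/(p-1)$ from the conversion sum and $-p^{k-2}/\alpha$ from $\lambda_p(f)$, giving the second case.

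\textbf{Main obstacle.} The genuine difficulty is normalisation bookkeeping: the Atkin--Lehner conventions adopted from \cite{klz17} differ from those of \cite{al78} by diamond operators (see the footnote in the notations section), the operators $W_p, W_{p^a}, W_{N_1 p^a}$ are related by further diamond twists, and the translation between naive and newform twists at $p$-power level is sensitive both to the conductor of $\psi_p$ and to whether the local component of $f$ at $p$ is principal series, Steinberg or supercuspidal. The Steinberg sub-case is by far the most delicate, since there the standard Gauss-sum machinery collapses and must be replaced by an independent direct computation.
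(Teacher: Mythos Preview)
Your crystalline computation matches the paper's: it simply cites \cite[Appendix, Step 1]{loe18}, which carries out exactly the decomposition you describe.

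In the non-crystalline case, however, the paper takes a shorter route that avoids Gauss-sum conversions entirely. The point is that $f_\beta = f\otimes\psi_p^{-1}$ is already the \emph{newform} twist (by the paper's conventions), hence is itself a newform of level $N_1 p^a$; so one can apply the Atkin--Lehner pseudo-eigenvalue directly to $f_\beta$ rather than to $f$. Concretely,
\[
 \langle W_{N_1 p^a}(f_\beta),\, f_\alpha^c\rangle_{N_1 p^a}
 = \overline{\lambda_{N_1 p^a}(f_\beta)}\,\langle f_\alpha^c,\, f_\alpha^c\rangle_{N_1 p^a}
 = \overline{\lambda_{N_1}(f_\beta)}\cdot\overline{\lambda_{p^a}(f_\beta)}\cdot\langle f,f\rangle_{N_1 p^a},
\]
where the last step uses strict multiplicativity and the unitarity of $W_{N_1}$. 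The factor $\overline{\lambda_{p^a}(f_\beta)}$ is then related to $\lambda_{p^a}(f)$ via the involution identity $\overline{\lambda_{p^a}(f_\beta)} = (p^a)^{k-2}/\lambda_{p^a}(f_\beta)$ and $\lambda_{p^a}(f_\beta)\lambda_{p^a}(f) = (p^a)^{k-2}\psi(p^a)\psi_p(-1)$, and finally $\lambda_{p^a}^{\mathrm{AL}}(f)$ is read off from Atkin--Li \cite[Theorem~2.1]{al78}. Your route via Gauss-sum expansion of the twist and subsequent re-summation would also reach the answer, but with considerably more bookkeeping.

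In particular, your assessment that the Steinberg sub-case is ``by far the most delicate'' is off: when $\psi_p=\mathrm{id}$ one has $f_\beta=f$, so there is nothing to convert, and Atkin--Li's formula $\lambda_{p}^{\mathrm{AL}}(f)=-p^{k-2}/\alpha$ applies immediately. The case distinction in the statement reflects only the two branches of Atkin--Li's theorem, not any extra analytic difficulty.
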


\begin{proof}
    The crystalline case is from \cite[Appendix, Step 1]{loe18}. We compute the other case:
    \begin{align*}
        \langle  W_{N_{1}p^{a}}\left( f_{\beta}\right), f_{\alpha}^{c} \rangle_{N_{1} p^{a}}&= \overline{\lambda_{N_{1}p^{a}}\left( f_{\beta}\right)}\langle f_{\alpha}^{c}, f_{\alpha}^{c}\rangle_{N_{1} p^{a}} \\
        &= \overline{\lambda_{N_{1}}\left( f_{\beta}\right)} \cdot \overline{\lambda_{p^{a}}\left( f_{\beta}\right)} \cdot \langle \lambda_{N_{1}}\left( f\right)^{-1} \cdot f \mid W_{N_{1}}, \lambda_{N_{1}}\left( f\right)^{-1}
        \cdot f \mid W_{N_{1}} \rangle_{N_{1} p^{a}}\\
        &= \overline{\lambda_{N_{1}}\left( f_{\beta}\right)} \cdot \overline{\lambda_{p^{a}}\left( f_{\beta}\right)} \cdot \dfrac{\lambda_{N_{1}}\left( f\right)}{N_{1}^{k-2}\lambda_{N_{1}}\left( f\right)} \cdot N_{1}^{k-2}\langle f,f\rangle_{N_{1} p^{a}}\\
        &= \overline{\lambda_{N_{1}}\left( f_{\beta}\right)} \cdot \overline{\lambda_{p^{a}}\left( f_{\beta}\right)} \langle f,f\rangle_{N_{1}p^{a}}.
    \end{align*}

We compute that \begin{align*}
    \overline{\lambda_{p^{a}}\left( f_{\beta}\right)} = \dfrac{\left( p^{a}\right)^{k-2}}{\lambda_{p^{a}}\left( f_{\beta}\right)}
    =\dfrac{\left( p^{a}\right)^{k-2}\lambda_{p^{a}}\left( f\right)}{\left( p^{a}\right)^{k-2}\psi\left( p^{a}\right)\psi_{p}\left( -1\right)}
    =\psi\left( p^{-a}\right)\psi_{p}\left( -1\right)\lambda_{p^{a}}\left( f\right).
\end{align*}
If we let $\lambda_{p^{a}}^{\operatorname{AL}}\left( f\right)$ denote the Atkin--Lehner pseudo-eigenvalue defined using Atkin--Li's convention, then $\lambda_{p^{a}}\left( f\right)=\psi_{p}\left( -1\right)\psi\left( p^{a}\right)\lambda_{p^{a}}^{\operatorname{AL}}\left( f\right)$ (c.f.\ \cite[\S 2.5]{klz17}).
By Theorem 2.1 of \cite{al78}, we have \begin{align*}
    \lambda_{p^{a}}^{\operatorname{AL}}\left( f\right)&= \left( \dfrac{p^{k-2}}{\alpha}\right)^{a}G\left( \psi_{p}\right) \text{ if } \operatorname{cond}\left( \psi_{p}\right)=a,  \\
    \lambda_{p^{a}}^{\operatorname{AL}}\left( f\right)&= -\dfrac{p^{k-2}}{\alpha} \text{ if } a=1 \text{ and } \psi_{p}=\operatorname{id}.
\end{align*} Thus
\[ \overline{\lambda_{p^{a}}\left( f_{\beta}\right)}=\begin{cases}
    \left( \dfrac{p^{k-2}}{\alpha}\right)^{a}G\left( \psi_{p}\right) \text{ if } \operatorname{cond}\left( \psi_{p}\right)=a,\\
     -\dfrac{p^{k-2}}{\alpha} \text{ if } a=1 \text{ and } \psi_{p}=\operatorname{id}.
\end{cases} \qedhere \]
\end{proof}

 Let $r \geq a $ be a large enough integer. To ease the notation we write $\chi$ for $\psi_{p}\epsilon_{p}^{-1}$, and write $m$ for $k-l-2t$.

  We now calculate the numerator term. By Lemma \ref{linearfunctionalproof} applied to the function $  g_{\epsilon^{-1}}^{[pN]}   \cdot \delta^{t}_{m}\left(F^{[p]}_{m,\psi_{p}\epsilon_{p}^{-1} } \right) $, the numerator becomes
\begin{equation} \label{eq:51}
\left(-1\right)^{t} \left\langle f_{\beta} \mid_{k}  W_{N_{1}p^{2r}}, g_{\epsilon^{-1}}^{[pN]} \cdot \delta_{m}^{t} \left( F_{m,\chi}^{[p]}\right) \right\rangle_{Np^{2r}}.
\end{equation}

 The first step in this direction is to replace the depleted Eisenstein series $F_{m,\chi}^{[p]}$ by another Eisenstein series, which will be easier for integration. Define \begin{equation*}
     \Tilde{F}=a_0+\sum\limits_{n \geq 1} \left( \sum\limits_{d \mid n, p \nmid d} \chi\left(d \right) d^{m-1}  \left(\zeta_{N}^{n/d}+\left(-1 \right)^{m+\chi}\zeta_{N}^{-n/d} \right) \right) q^{n},
\end{equation*}where the constant term $a_{0}$ will be determined in Lemma \ref{lem:reducetokatoeisenstein}. Note that $\Tilde{F}$ is $p$-ordinary, and its $p$-depletion is precisely $F_{m,\chi}^{[p]}$.

\begin{lemma} \label{linearfunctinalvanish} We have
\[
    \left\langle f_{\beta} \mid_{k}  W_{N_{1}p^{2r}}, g_{\epsilon^{-1}}^{[pN]} \cdot \delta_{m}^{t} \left( F_{m,\chi}^{[p]}\right) \right\rangle_{Np^{2r}}=\left\langle f_{\beta} \mid_{k}  W_{N_{1}p^{2r}}, g_{\epsilon^{-1}}^{[pN]} \cdot \delta_{m}^{t} \left( \Tilde{F}\right) \right\rangle_{Np^{2r}}.
\]
\end{lemma}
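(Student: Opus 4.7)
The plan is to establish the stronger statement that the nearly-holomorphic form
\[ \Phi \coloneqq g^{[pN]}_{\epsilon^{-1}} \cdot \delta_m^t\bigl(\tilde F - F^{[p]}_{m,\chi}\bigr) \]
lies in the kernel of the Atkin operator $U_p$ on level $Np^{2r}$. Once this is known, the lemma follows immediately from the Petersson adjointness $\langle h \mid U_p^T, \phi\rangle = \langle h, \phi \mid U_p\rangle$ together with the fact recorded in the proof of Lemma \ref{linearfunctionalproof} that $f_\beta \mid_{k} W_{N_1 p^{2r}}$ is an eigenform for the transpose Hecke operator $U_p^T$ with eigenvalue $\overline{\alpha}$: applying adjointness gives $\overline{\alpha}\,\langle f_\beta \mid W_{N_1 p^{2r}}, \Phi\rangle = \langle f_\beta \mid W_{N_1 p^{2r}}, U_p(\Phi)\rangle = 0$, and $\overline{\alpha}$ is a unit since $\alpha$ is the ordinary root.

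To prove $U_p(\Phi) = 0$, the plan is to chain together three identities. First, the very definition of $p$-depletion gives $\tilde F - F^{[p]}_{m,\chi} = V_p U_p(\tilde F)$, using $U_p V_p = \operatorname{id}$ (which holds on nearly-holomorphic forms once one observes that $V_p$ rescales the auxiliary variable $y$ to $py$ while $U_p$ rescales it back to $y/p$, so the two cancel). Second, the Maass--Shimura operator satisfies the commutation relation
\[ \delta_m^t \circ V_p = p^t \, V_p \circ \delta_m^t, \]
obtained by induction from the base case $\delta_m(V_p h) = p\, V_p(\delta_m h)$, which is a direct verification from $\delta_m = \tfrac{1}{2\pi i}\bigl(\partial_\tau + \tfrac{m}{\tau - \bar\tau}\bigr)$ and $(V_p h)(\tau) = h(p\tau)$. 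Combining these gives $\Phi = p^t\, g^{[pN]}_{\epsilon^{-1}} \cdot V_p\bigl(\delta_m^t U_p \tilde F\bigr)$. Third, for any holomorphic $A$ and any nearly-holomorphic $K$ we have the Hecke identity
\[ U_p(A \cdot V_p K) = U_p(A) \cdot K, \]
checked again by explicit $q$-expansion, where the $y$-rescalings introduced by $V_p$ and $U_p$ cancel precisely. Since $g^{[pN]}_{\epsilon^{-1}}$ is $p$-depleted, $U_p(g^{[pN]}_{\epsilon^{-1}}) = 0$, and we conclude $U_p(\Phi) = 0$, as required.

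The main obstacle is the careful bookkeeping on nearly-holomorphic forms: each of $V_p$, $U_p$, and $\delta_m^t$ interacts non-trivially with the auxiliary $y$-variable, so the powers of $p$ arising at each step must be tracked precisely for both the commutation relation and the Hecke identity to hold in the form stated. Granted those two identities, the rest of the argument is purely formal, resting only on the transpose-Hecke eigenproperty of $f_\beta \mid W_{N_1 p^{2r}}$ already established in Lemma \ref{linearfunctionalproof} and the fact that $\alpha$ is a $p$-adic unit.
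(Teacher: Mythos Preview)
Your proof is correct and follows essentially the same approach as the paper: both arguments show that the difference $g_{\epsilon^{-1}}^{[pN]} \cdot \delta_m^t(\tilde F - F^{[p]}_{m,\chi})$ is annihilated by $U_p$ (the paper phrases this via $\tilde F - F^{[p]}_{m,\chi} = h \mid_m B_p$ and a ``simple $q$-expansion calculation'', which is exactly your chain of identities), and then conclude. The only cosmetic difference is that the paper invokes the fact that $\lambda_{f_\alpha^c}$ factors through the ordinary projector to kill the $U_p$-kernel, whereas you argue directly via Petersson adjointness with $U_p^T$ and the nonvanishing of $\overline{\alpha}$; these are equivalent formulations of the same mechanism.
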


\begin{proof}
It suffices to prove
\[
 \lambda_{f_{\alpha}^{c}}\left( g_{\epsilon^{-1}}^{[pN]} \cdot \delta_{m}^{t} \left( F_{m,\chi}^{[p]}- \Tilde{F}\right)\right) = 0.
\]
For this, we note that by construction, there exists a modular form $h$ such that $F_{m,\chi}^{[p]}- \Tilde{F}=h\mid_{m}B_{p}$, where $B_{p}$ is the normalised level-raising operator acting on $q$-expansions as $q \mapsto q^p$. A simple $q$-expansion calculation shows that $g_{\epsilon^{-1}}^{[pN]} \cdot \delta_{m}^{t}\left( h \mid_m B_{p}\right)$ is in the kernel of $U_p$. Since by definition $\lambda_{f_{\alpha}^{c}}\left( \cdot\right)$ factors through the ordinary projector, it must send this form to 0.
\end{proof}

Since $W_{N_{1}p^{2r}}= \langle p^{2r} \rangle_{N_{1}}W_{N_{1}}W_{p^{2r}}$, and the adjoint operator of $W_{p^{2r}}$ is $\begin{psmallmatrix}
p^{2r} & 0 \\  0 & p^{2r} \end{psmallmatrix} W_{p^{2r}}^{-1} = \langle p^{-2r} \rangle_{N_{1}} \cdot \langle -1 \rangle_{p^{2r}} \cdot W_{p^{2r}}$, we have
\begin{multline*}
    \left(-1\right)^{t} \langle f_{\beta} \mid_{k}  W_{N_{1}p^{2r}}, g_{\epsilon^{-1}}^{[pN]} \cdot \delta_{m}^{t} \left( F_{m,\chi}^{[p]}\right) \rangle \\=
    \left(-1\right)^{t} \psi_{p}\left( -1\right) p^{2r} \langle f_{\beta} \mid W_{N_{1}}, \left( g_{\epsilon^{-1}}^{[pN]} \mid W_{p^{2r}} \right) \cdot \left(\delta_{m}^{t} \Tilde{F} \mid W_{p^{2r}}\right) \rangle.
\end{multline*}

 For $(\gamma_{1},\gamma_{2}) \in \left(\QQ/ \mathbb{Z}\right)^{\oplus 2} $, recall Kato's weight $m$ Eisenstein series $F_{\gamma_{1},\gamma_{2}}^{\left( m\right)}$ defined in \cite[\S 3]{kat04}. We omit the superscript $\left(m \right)$ since it is the only weight we will consider in this setting.

Then \begin{lemma} \label{lem:reducetokatoeisenstein}
\begin{align*}
    \Tilde{F} \mid W_{p^{2r}} = p^{r\left( 2m-3\right)} \chi\left( -1\right) \sum\limits_{c \in \left( \mathbb{Z}/p^{r}\right)^{\times}}\chi\left( c\right)F_{0,\frac{c}{p^{r}}+\frac{p^{r}}{N}} \mid_{m} B_{p^{r}}
\end{align*}
\end{lemma}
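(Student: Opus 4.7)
The strategy is to rewrite $\widetilde{F}$ as an explicit linear combination of Kato's Eisenstein series $F_{\gamma_{1},\gamma_{2}}^{(m)}$ from \cite[\S 3]{kat04}, and then compute the Atkin--Lehner action using the $\operatorname{SL}_{2}(\mathbb{Z})$-equivariance $F_{\gamma_{1},\gamma_{2}}^{(m)}\mid_{m}\gamma = F^{(m)}_{(\gamma_{1},\gamma_{2})\gamma}$ recorded in \emph{loc.\ cit}. The condition ``$p \nmid d$'' in the $q$-expansion of $\widetilde{F}$, together with the twisting by $\chi$ of conductor dividing $p^{r}$, suggests the identification
\[
\widetilde{F} = \sum_{c \in (\mathbb{Z}/p^{r})^{\times}} \chi(c)\cdot F_{c/p^{r},\, 1/N}^{(m)},
\]
up to an explicit adjustment to the constant term that pins down $a_{0}$. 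The symmetrisation term $(-1)^{m+\chi}\zeta_{N}^{-n/d}$ appearing in $\widetilde{F}$ matches the intrinsic $\pm$-symmetry $F_{-\gamma_{1},-\gamma_{2}}^{(m)} = (-1)^{m}F_{\gamma_{1},\gamma_{2}}^{(m)}$ of Kato's series, which is essential for the matching to be well-defined.

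Next, I would factor the Atkin--Lehner matrix $W_{p^{2r}}$ (of determinant $p^{2r}$, with entries as prescribed by the congruences in the paper's convention) as a scalar times a modification of an $\operatorname{SL}_{2}(\mathbb{Z})$-matrix by a diagonal $p$-power, and apply the equivariance to each summand. A direct matrix multiplication shows that the image row vector $(c/p^{r},\, 1/N)\gamma$ takes the shape $(0,\, c'/p^{r}+p^{r}/N)$ modulo $\mathbb{Z}^{2}$, where $c' \in (\mathbb{Z}/p^{r})^{\times}$ is determined by $c$ together with the Atkin--Lehner congruences. The residual ``$p^{r}$ in the second coordinate'' that cannot be absorbed into the $\operatorname{SL}_{2}$-part translates, under Kato's normalisation, into an application of the level-raising operator $B_{p^{r}}$. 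Re-indexing the sum by $c'$ and absorbing the resulting sign into $\chi(-1)$ then produces the shape of the right-hand side of the lemma.

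The main obstacle is the careful bookkeeping of scalar prefactors to obtain $p^{r(2m-3)}$. This exponent must be assembled from (i) the weight-$m$ slash action of the scalar part of the splitting of $W_{p^{2r}}$, (ii) the homogeneity of $F_{\gamma_{1},\gamma_{2}}^{(m)}$ in its arguments $(\gamma_{1},\gamma_{2})$, which involves powers of $p^{r}$ from the denominators, and (iii) the weight-$m$ normalisation of $B_{p^{r}}$. Collecting these powers using the explicit formulas in \cite[\S 3]{kat04} yields the asserted exponent $2m-3$, while the sign $\chi(-1)$ emerges from the interplay between the symmetrisation factor $(-1)^{m+\chi}$ and the specific Atkin--Lehner congruence $y \equiv -1 \pmod{p^{2r}}$ used in conjugation. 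Once these scalars are verified, the identity follows directly from the row-vector computation.
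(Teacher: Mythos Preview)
Your overall strategy---write $\widetilde{F}$ as a combination of Kato's series and use the $\operatorname{SL}_{2}(\mathbb{Z})$-equivariance---is exactly the paper's. But your starting identity
\[
\widetilde{F} \;=\; \sum_{c \in (\mathbb{Z}/p^{r})^{\times}} \chi(c)\, F^{(m)}_{c/p^{r},\,1/N}
\]
is wrong, and this is not a cosmetic issue. Kato's $F^{(m)}_{c/p^{r},\,1/N}$ is a form on $\Gamma(Np^{r})$; because the \emph{first} index has denominator $p^{r}$, its $q$-expansion at $\infty$ lives in $q^{1/p^{r}}$. The series $\widetilde{F}$ has an honest integral $q$-expansion, so the two sides cannot agree. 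The correct identity (checked directly from Kato's $q$-expansion formulae) is
\[
\widetilde{F} \;=\; p^{r(m-1)} \sum_{c \in (\mathbb{Z}/p^{r})^{\times}} \chi(c)\, F_{c/p^{r},\,1/N}\big|_{m} B_{p^{r}},
\]
with the $B_{p^{r}}$ already present in the decomposition and a prefactor $p^{r(m-1)}$ you are missing.

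This changes the mechanism of the proof. The $B_{p^{r}}$ on the right-hand side of the lemma does \emph{not} ``emerge'' from the Atkin--Lehner action on a residual $p^{r}$; it is the $B_{p^{r}}$ from the initial decomposition, transported across via the matrix identity
\[
B_{p^{r}}\, W_{p^{2r}} \;=\; \begin{psmallmatrix} p^{r} & 0 \\ 0 & p^{r} \end{psmallmatrix}\begin{psmallmatrix} p^{r}x & y \\ Nz & p^{r}w \end{psmallmatrix} B_{p^{r}},
\]
where the middle matrix lies in $\operatorname{SL}_{2}(\mathbb{Z})$. The scalar $p^{r(2m-3)}$ then falls out as $p^{r(m-1)}\cdot p^{r(m-2)}$: the first factor from the corrected initial identity, the second from the weight-$m$ action of $\begin{psmallmatrix} p^{r} & 0 \\ 0 & p^{r} \end{psmallmatrix}$. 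Your three-source bookkeeping (homogeneity in $(\gamma_{1},\gamma_{2})$, normalisation of $B_{p^{r}}$, etc.) cannot recover this exponent because it is built on an identity that omits the $p^{r(m-1)}$ contribution entirely. Once you fix the initial decomposition, the $\operatorname{SL}_{2}$-equivariance computation and the reindexing $c\mapsto -c$ (giving $\chi(-1)$) go through exactly as you describe.
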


\begin{proof}
We compute that, via the $q$-expansion formulae given in \cite[Proposition 3.10]{kat04}, we have:
\begin{equation*}
    \Tilde{F}=p^{r\left( m-1\right)} \sum\limits_{c \in \left( \mathbb{Z}/p^{r}\right)^{\times}} \chi\left( c\right) F_{c/p^{r},1/N} \mid_{m}B_{p^{r}}.
\end{equation*}
where we have chosen the undetermined constant term $a_{0}$ of $\Tilde{F}$ to be the constant term of the right-hand-side of the above equation.

Let $x,y,z,w$ be integers chosen as in Section 3.1, so that $W_{p^{2r}}=\begin{psmallmatrix}
p^{2r}x & y \\ p^{2r}Nz & p^{2r}w
\end{psmallmatrix}$. The following identity can be readily verified:
\begin{equation*}
    B_{p^{r}} W_{p^{2r}}=\begin{psmallmatrix}
    p^{r} &0 \\ 0 & p^{r}
    \end{psmallmatrix} \begin{psmallmatrix}
    p^{r}x & y \\ Nz & p^{r}w
    \end{psmallmatrix} B_{p^{r}}.
\end{equation*}

The first matrix acts on weight-$m$ forms by multiplication by $p^{r\left( m-2 \right)}$. The second matrix lies in $\operatorname{SL}_{2}\left( \mathbb{Z}\right)$, and by \cite[\S 3]{kat04}, for all $\gamma \in \operatorname{SL}_{2}\left( \mathbb{Z}\right)$, we have $F_{\left(\gamma_{1},\gamma_{2} \right)} \mid_{m} \gamma = F_{\left(\gamma_{1},\gamma_{2} \right) \cdot \gamma}$. Thus we obtain \begin{align*}
\Tilde{F} \mid W_{p^{2r}}& =  p^{r\left( m-1\right)} \sum\limits_{c \in \left( \mathbb{Z}/p^{r}\right)^{\times}} \chi\left( c\right) F_{c/p^{r},1/N} \mid_{m}B_{p^{r}} W_{p^{2r}}  \\
&= p^{r\left( 2m-3\right)} \chi\left( -1\right) \sum\limits_{c \in \left( \mathbb{Z}/p^{r}\right)^{\times}}\chi\left( c\right)F_{0,\frac{c}{p^{r}}+\frac{p^{r}}{N}} \mid_{m} B_{p^{r}}.\qedhere
\end{align*}
\end{proof}

The numerator term (displayed equation \eqref{eq:51}) becomes:

\begin{align*}
    &\quad \left(-1\right)^{t} \langle f_{\beta} \mid_{k}  W_{N_{1}p^{2r}}, g_{\epsilon^{-1}}^{[pN]} \cdot \delta_{m}^{t} \left( F_{m,\chi}^{[p]}\right) \rangle_{Np^{2r}} \\
    &=\left(-1\right)^{t}p^{r(2t+2m-1)}\epsilon_{p}\left( -1\right)   \langle f_{\beta} \mid W_{N_{1}}, \left( g_{\epsilon^{-1}}^{[pN]} \mid W_{p^{2r}} \right)   \cdot \left( \sum\limits_{c \in \left(\mathbb{Z}/p^{r}\mathbb{Z}\right)^{\times}} \chi(c) \delta_{m}^{t}  \cdot  \left( F_{0, c/p^{r}+p^{r}/N} \mid B_{p^{r}}\right) \right) \rangle_{Np^{2r}} \\
    &= \left(-1\right)^{t}p^{r(2t+2m)}(1-\dfrac{1}{p})  \epsilon_{p}\left( -1\right) \langle f_{\beta} \mid W_{N_{1}}, \left( g_{\epsilon^{-1}}^{[pN]} \mid W_{p^{2r}} \right) \cdot \delta_{m}^{t}\left(   F_{0,1/p^{r}+p^{r}/N} \mid B_{p^{r}} \right) \rangle_{Np^{2r}}.
\end{align*}

\begin{lemma} \label{lem:reducelevel}
\begin{align*}
    &\left(-1\right)^{t} \langle f_{\beta} \mid_{k}  W_{N_{1}p^{2r}}, g_{\epsilon^{-1}}^{[pN]} \cdot \delta_{m}^{t} \left( F_{m,\chi}^{[p]}\right) \rangle_{Np^{2r}}\\ =& \left( -1\right)^{t}\epsilon_{p}\left( -N\right) \psi^{-1}_{p}\left( N\right)\psi\left( p\right)^{-2r}\epsilon\left( p\right)^{2r}  p^{2r\left( t+m\right)} \left( 1-\dfrac{1}{p}\right)  \cdot \langle f_{\beta} \mid W_{N_{1}}, \left( g_{\epsilon^{-1}}^{[pN]} \mid W_{p^{2r}} \right) \cdot \delta_{m}^{t}\left(  F_{0,1 / N p^{2 r}}  \right) \rangle_{Np^{2r}}.
\end{align*}
\end{lemma}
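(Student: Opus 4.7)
The plan is to transform the Eisenstein factor $F_{0,1/p^{r}+p^{r}/N}\mid_{m}B_{p^{r}}$ inside the Petersson pairing into $F_{0,1/(Np^{2r})}$, picking up the character constant $\psi_{p}^{-1}(N)\epsilon_{p}(N)\psi(p)^{-2r}\epsilon(p)^{2r}$. Combining this with the already-present factor $\epsilon_{p}(-1)$ from the preceding step, and using $\epsilon_{p}(-1)\epsilon_{p}(N)=\epsilon_{p}(-N)$, produces exactly the scalar on the right-hand side of the lemma; note that the power of $p$ already matches, since $p^{r(2t+2m)}=p^{2r(t+m)}$.

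The first step will be to establish a matrix identity of the form
\[
F^{(m)}_{0,\,1/p^{r}+p^{r}/N}\big|_{m}B_{p^{r}} = F^{(m)}_{0,\,1/(Np^{2r})}\big|_{m}\sigma
\]
for an explicit $\sigma \in \operatorname{GL}^{+}_{2}(\mathbb{Q})$. Using Kato's transformation formula $F^{(m)}_{\alpha,\beta}\mid_{m}\gamma = F^{(m)}_{(\alpha,\beta)\gamma}$ for $\gamma \in \operatorname{SL}_{2}(\mathbb{Z})$, together with the Chinese Remainder Theorem isomorphism $(\mathbb{Z}/Np^{2r})^{\times} \cong (\mathbb{Z}/N)^{\times}\times(\mathbb{Z}/p^{2r})^{\times}$ afforded by $\gcd(N,p)=1$, I will single out the unique $d \in (\mathbb{Z}/Np^{2r})^{\times}$ with $d \equiv p^{2r}\pmod{N}$ and $d \equiv N\pmod{p^{2r}}$. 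The matrix $\sigma$ will then be built as the composition of an $\operatorname{SL}_{2}(\mathbb{Z})$-representative of the diamond $\langle d\rangle_{Np^{2r}}^{-1}$ with a scaling matrix that absorbs the level change from $Np^{r}$ to $Np^{2r}$ under $B_{p^{r}}$ into a power of $p$ (which will cancel with the scaling factor in Kato's formula for the non-$\operatorname{SL}_{2}(\mathbb{Z})$ slash-action).

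Once this identity is in hand, I will transfer $\mid_{m}\sigma$ from the Eisenstein slot to the other slot of the Petersson inner product via adjointness, so that $\sigma$ becomes a diamond $\langle d\rangle_{Np^{2r}}$ acting on $f_{\beta}\mid W_{N_{1}}$ and on $g_{\epsilon^{-1}}^{[pN]}\mid W_{p^{2r}}$. By the choice of $d$ we have $(\psi\psi_{p})(d)=\psi(p)^{2r}\psi_{p}(N)$ and $(\epsilon\epsilon_{p})(d)=\epsilon(p)^{2r}\epsilon_{p}(N)$; a short bookkeeping of complex conjugates arising from the Petersson pairing then yields the desired factor $\psi_{p}^{-1}(N)\epsilon_{p}(N)\psi(p)^{-2r}\epsilon(p)^{2r}$.

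The hard part will be the first step: producing $\sigma$ explicitly. Kato's Eisenstein series are only equivariant for $\operatorname{SL}_{2}(\mathbb{Z})$, while $B_{p^{r}}=\begin{psmallmatrix}p^{r}&0\\0&1\end{psmallmatrix}$ lies outside $\operatorname{SL}_{2}(\mathbb{Z})$, so the desired identity is not immediate from Kato's transformation law. The arithmetic crux is that, thanks to $\gcd(N,p)=1$, one can find an $\operatorname{SL}_{2}(\mathbb{Z})$-matrix whose parameter action on $(0,1/(Np^{2r}))$ lands on $(0,1/p^{r}+p^{r}/N)$ modulo 1; the CRT element $d$ encodes this compatibility, and the remaining scalar factor absorbs the level discrepancy as a pure power of $p$. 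Once $\sigma$ is pinned down, the character extraction in the subsequent steps is routine bookkeeping.
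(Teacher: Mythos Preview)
Your plan for extracting the character factor via a diamond operator is correct and matches the paper's first move: one has $F_{0,\,1/p^{r}+p^{r}/N}=F_{0,\,1/(Np^{r})}\mid\langle N\rangle_{p}\langle p^{2r}\rangle_{N}$, and transferring this diamond to the other two slots of the Petersson product produces exactly $\psi_{p}^{-1}(N)\psi(p)^{-2r}\epsilon(p)^{2r}\epsilon_{p}(N)$.

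However, your ``first step'' as stated cannot be carried out. You assert the existence of $\sigma\in\operatorname{GL}_{2}^{+}(\QQ)$, built from a diamond at level $Np^{2r}$ and a scaling, with
\[
F_{0,\,1/p^{r}+p^{r}/N}\big|_{m}B_{p^{r}}=F_{0,\,1/(Np^{2r})}\big|_{m}\sigma.
\]
No such $\sigma$ exists. The obstruction is visible already at the level of the $\operatorname{SL}_{2}(\mathbb{Z})$-orbit structure: the element $1/p^{r}+p^{r}/N$ has exact order $Np^{r}$ in $\QQ/\mathbb{Z}$, while $1/(Np^{2r})$ has order $Np^{2r}$, so $(0,1/(Np^{2r}))$ and $(0,1/p^{r}+p^{r}/N)$ lie in different $\operatorname{SL}_{2}(\mathbb{Z})$-orbits and no Kato transformation relates the two $F$'s. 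Composing with a scalar matrix only multiplies by a constant, and $B_{p^{r}}$ itself has no simple action on Kato's series. Concretely, the left-hand side is not a single Kato Eisenstein series at all: a $q$-expansion computation gives
\[
F_{0,\,1/(Np^{r})}\big|_{m}B_{p^{r}}=p^{-r}\sum_{c\in\mathbb{Z}/p^{r}\mathbb{Z}}F_{0,\,(1+Np^{r}c)/(Np^{2r})},
\]
a genuine average of $p^{r}$ distinct diamond-conjugates of $F_{0,\,1/(Np^{2r})}$.

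This averaging identity is precisely the missing ingredient. The paper uses it, together with the observation that $1+Np^{r}c\equiv 1\pmod{Np^{r}}$, so the diamonds $\langle 1+Np^{r}c\rangle$ act trivially on $f_{\beta}\mid W_{N_{1}}$ and on $g_{\epsilon^{-1}}^{[pN]}\mid W_{p^{2r}}$ (both of which live at level $\Gamma_{1}(Np^{r})\cap\Gamma_{0}(Np^{2r})$). Hence inside the Petersson product the sum collapses: the $p^{r}$ terms all contribute equally, cancelling the $p^{-r}$, and one is left with the single term $F_{0,\,1/(Np^{2r})}$. Your outline needs to replace the non-existent matrix identity by this two-step argument.
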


\begin{proof}
We observe $F_{0,1/p^{r}+p^{r}/N}$ is the image of $F_{0,1/Np^{r}}$ under the action of $\langle N \rangle_{p} \langle p^{2r} \rangle_{N}$. We have \begin{align*}
      &\langle f_{\beta} \mid W_{N_{1}}, \left( g_{\epsilon^{-1}}^{[pN]} \mid W_{p^{2r}} \right) \cdot \delta_{m}^{t}\left(   F_{0,1/p^{r}+p^{r}/N} \mid B_{p^{r}} \right) \rangle_{Np^{2r}} \\
      =& \langle f_{\beta} \mid W_{N_{1}}, \left( g_{\epsilon^{-1}}^{[pN]} \mid W_{p^{2r}} \right) \cdot \delta_{m}^{t}\left(   F_{0,1/Np^{r}} \mid\langle N \rangle_{p} \langle p^{2r} \rangle_{N}  B_{p^{r}} \right) \rangle_{Np^{2r}} \\
      =&\langle f_{\beta} \mid W_{N_{1}} \mid \langle N^{-1} \rangle_{p} \langle p^{-2r} \rangle_{N} ,  \left( g_{\epsilon^{-1}}^{[pN]} \mid W_{p^{2r}} \mid \langle N^{-1} \rangle_{p} \langle p^{-2r} \rangle_{N} \right) \cdot \delta_{m}^{t}\left(   F_{0,1/Np^{r}} \mid B_{p^{r}} \right) \rangle_{Np^{2r}}\\
      =&\psi_{p}^{-1}\left( N\right) \psi^{-1}\left( p\right)^{2r}\epsilon\left( p \right)^{2r} \epsilon_{p}\left( N\right)\langle f_{\beta} \mid W_{N_{1}}, \left( g_{\epsilon^{-1}}^{[pN]} \mid W_{p^{2r}} \right) \cdot \delta_{m}^{t}\left(   F_{0,1/Np^{r}} \mid B_{p^{r}} \right) \rangle_{Np^{2r}}.
\end{align*}

Again by $q$-expansion calculations, one finds that  \begin{equation*}
    F_{0,1 / N p^{r}} \mid B_{p^{r}}=p^{-r} \sum_{c \in \mathbb{Z} / p^{r}\mathbb{Z}} F_{0,\left(1+N p^{r} c\right) / N p^{2 r}},
\end{equation*} and so :\begin{align*}
    &\langle f_{\beta} \mid W_{N_{1}}, \left( g_{\epsilon^{-1}}^{[pN]} \mid W_{p^{2r}} \right) \cdot \delta_{m}^{t}\left(   F_{0,1/Np^{r}} \mid B_{p^{r}} \right) \rangle_{Np^{2r}} \\
    =& \langle f_{\beta} \mid W_{N_{1}}, \left( g_{\epsilon^{-1}}^{[pN]} \mid W_{p^{2r}} \right) \cdot \delta_{m}^{t}\left(   \sum_{c \in \mathbb{Z} / p^{r}\mathbb{Z}} F_{0,\left(1+N p^{r} c\right) / N p^{2 r}} \right) \rangle_{Np^{r}\left(p^{r}\right)} \\
    =& \langle f_{\beta} \mid W_{N_{1}}, \left( g_{\epsilon^{-1}}^{[pN]} \mid W_{p^{2r}} \right) \cdot \delta_{m}^{t}\left(  F_{0,1 / N p^{2 r}}  \right) \rangle_{Np^{2r}}.
\end{align*}

Putting everything together, we obtain the Lemma.
\end{proof}

\begin{lemma} \label{llz423}
Let $S(pN)$ denote the set of integers whose prime factors all divide $pN$. Then
    \begin{align*}
        &\langle f_{\beta} \mid W_{N_{1}}, \left( g_{\epsilon^{-1}}^{[pN]} \mid W_{p^{2r}} \right) \cdot \delta_{m}^{t}\left( F_{0,1/Np^{2r}} \right) \rangle_{Np^{2r}} \\
        =& 2^{1-k} i^{k-l} \left( Np^{2r} \right)^{2+2t-k+l} \Lambda^{[pN]}( f, g^{*}, l+t) \cdot C\left(\left( f_{\beta} \mid W_{N_{1}}\right)^{*}, g_{\epsilon^{-1}}^{[pN]} \mid W_{p^{2r}}, l+t\right),
    \end{align*} where \begin{align*}
    & C\left(\left( f_{\beta} \mid W_{N_{1}}\right)^{*}, g_{\epsilon^{-1}}^{[pN]} \mid W_{p^{2r}}, l+t\right)
    \coloneqq    \sum\limits_{n \in S(pN)} a_n(\left( f_{\beta} \mid W_{N_{1}}\right)^{*}) a_n(g_{\epsilon^{-1}}^{[pN]} \mid W_{p^{2r}}) n^{-l-t}.
\end{align*}
\end{lemma}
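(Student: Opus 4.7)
The plan is to identify this as a classical Rankin--Selberg integral and then appeal to \cite[Proposition 4.2.3]{LLZ14} (whence the label \texttt{llz423}). The key point is that $F_{0,1/Np^{2r}}$ is Kato's Eisenstein series at level $Np^{2r}$, and its Petersson pairing against the product of a cusp form and a nearly-holomorphic form unfolds, by Shimura's classical integral representation, into a Dirichlet series that matches the Rankin--Selberg $L$-series up to the archimedean $\Gamma$-factors.

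First I would rewrite the Petersson pairing so that the Eisenstein series $F_{0,1/Np^{2r}}$ is paired against the product of the cusp form $\left(f_{\beta}\mid W_{N_{1}}\right)^{*}$ and $g_{\epsilon^{-1}}^{[pN]}\mid W_{p^{2r}}$; this is where the conjugation in the first slot is converted to the ``dual'' form $\left(f_{\beta}\mid W_{N_{1}}\right)^{*}$, which is what appears inside $C(\cdot,\cdot,\cdot)$. Next I would unfold the Eisenstein series against the Poincar\'e-type expansion, obtaining a Dirichlet series $\sum_n a_n\left(\left(f_{\beta}\mid W_{N_{1}}\right)^{*}\right)\,a_n\!\left(g_{\epsilon^{-1}}^{[pN]}\mid W_{p^{2r}}\right)\, n^{-(l+t)}$ weighted by an archimedean integral. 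The archimedean integral, computed using the $\delta_{m}^{t}$ operator, produces exactly the factor $\Gamma_{\mathbb{C}}(s)\Gamma_{\mathbb{C}}(s-l+1)$ (at $s=l+t$) together with the explicit constants $2^{1-k}i^{k-l}$, and the overall level produces the normalisation factor $(Np^{2r})^{2+2t-k+l}$; these are precisely the constants worked out in \cite[\S 4.2]{LLZ14}.

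The split of the Dirichlet series into the $L^{[pN]}$-part plus the correction term $C(\cdot,\cdot,\cdot)$ is a direct consequence of the $[pN]$-depletion of $g$: once the Hecke-eigenform $(f_\beta\mid W_{N_1})^{*}$ is new at primes dividing $N_1$ and ordinary at $p$, the Euler product $\prod_{\ell \nmid pN} P_{\ell}(f,g,\ell^{-s})^{-1}$ emerges for $n$ coprime to $pN$; the residual sum over $n\in S(pN)$ is by definition $C\left((f_{\beta}\mid W_{N_{1}})^{*},\,g_{\epsilon^{-1}}^{[pN]}\mid W_{p^{2r}},\,l+t\right)$. Combining the Euler product with the archimedean $\Gamma$-factors yields $\Lambda^{[pN]}(f,g^{*},l+t)$, after identifying the ``bar''-twist produced by taking $(\cdot)^{*}$ in the first slot with passing from $g$ to $g^{*}$ in the second slot (this is the standard interchange of Atkin--Lehner and complex conjugation).

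The main obstacle I anticipate is the bookkeeping: keeping careful track of the powers of $i$, $2$, $N$, $p^{2r}$, and the Nebentypus twists that arise when the Atkin--Lehner operators $W_{N_{1}}$ and $W_{p^{2r}}$ are moved around. Since all of these have been verified in \cite[Proposition 4.2.3]{LLZ14} under essentially the same setup (with $p$-depleted forms and a Kato Eisenstein series at level divisible by $p^{2r}$), the cleanest approach is to reduce the identity to that proposition after matching conventions, rather than to redo the unfolding from scratch.
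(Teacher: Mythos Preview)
Your proposal is correct and matches the paper's approach exactly: the paper's entire proof is the single sentence ``This is Theorem 4.2.3 of \cite{LLZ14}.'' Your additional discussion of the unfolding mechanism and the multiplicative factorisation into $\Lambda^{[pN]}\cdot C(\cdot,\cdot,\cdot)$ is helpful context but not required---just be careful that the split into the $L$-part and the correction $C$ is a \emph{product} (via multiplicativity of Hecke eigenvalues), not a sum, so the phrase ``plus the correction term'' should read ``times''.
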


\begin{proof}
    This is Theorem 4.2.3 of \cite{LLZ14}.
\end{proof}

We need to evaluate the quantity $\sum\limits_{n \in S(pN)} a_n(\left( f_{\beta} \mid W_{N_{1}}\right)^{*}) a_n(g_{\epsilon^{-1}}^{[pN]} \mid W_{p^{2r}}) n^{-l-t}$. To ease the notation we write $s$ for $-l-t$. The key input is Lemma \ref{keylemma}.

\begin{lemma} \label{keylemma}
Assume $h$ is a newform of weight $l$, level $p^{b}M$ and character $\epsilon_{M}=\epsilon_{M} \cdot \epsilon_{p}$, where $\left( p,M\right)=1$. Let $R=p^{c+b}M$ for $c \geq 0$, which we assume to be large enough so that $h^{[p]}$ has level $R$. Then we have
     $$\sum\limits_{u \geq 0} X^{u} a_{p^{u}}\left(\left.h_{\epsilon_{M}^{-1}}^{[p]}\right|_{l} W_{p^{b+c}}\right)=\epsilon^{-1}_{M}(p^{b+c})\lambda_{p^b}\left(h\right)\left(p^{l-1} X\right)^{c} \frac{P_{p}\left( h, p^{-l} X^{-1}\right)}{P_{p}\left(h^{*}, X\right)}.$$

\end{lemma}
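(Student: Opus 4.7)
Since the claim is a formal power series identity in $X$, the strategy is to interpret both sides as generating functions of Fourier coefficients and to compute by manipulating the Atkin--Lehner operator at $p$. The key observation is the local (at $p$) matrix identity
\[
\begin{psmallmatrix} 0 & 1 \\ -p^{b+c} & 0 \end{psmallmatrix} = \begin{psmallmatrix} 0 & 1 \\ -p^{b} & 0 \end{psmallmatrix} \cdot \begin{psmallmatrix} p^{c} & 0 \\ 0 & 1 \end{psmallmatrix},
\]
which, after correcting by a diamond operator $\langle p^{-c}\rangle_M$ to match the $M$-congruence conditions imposed on $W_{p^{b+c}}$ in the paper's convention, yields
\[
h_{\epsilon_M^{-1}}^{[p]}\mid_l W_{p^{b+c}} = \epsilon_M^{-1}(p^c)\, p^{c(l-1)}\, V_p^c\!\left(h_{\epsilon_M^{-1}}^{[p]}\mid_l W_{p^b}\right),
\]
where $V_p F(\tau)=F(p\tau)$. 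Since $V_p^c$ shifts the $p^u$-Fourier coefficient to $p^{u-c}$, taking $X$-generating functions reduces the identity to computing $h_{\epsilon_M^{-1}}^{[p]}\mid_l W_{p^b}$, at the cost of multiplying by $\epsilon_M^{-1}(p^c)(p^{l-1}X)^c$.

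For the remaining computation, I would combine the standard depletion identity $h^{[p]} = P_p(h, V_p)\,h$ (which holds uniformly in $b$, via $T_p = U_p + \epsilon(p)p^{l-1}V_p$) with the commutation $(V_p^j G)_{\epsilon_M^{-1}} = \epsilon_M^{-1}(p^j) V_p^j G_{\epsilon_M^{-1}}$ to write $h_{\epsilon_M^{-1}}^{[p]} = P_p(h, \epsilon_M^{-1}(p)V_p)\,h_{\epsilon_M^{-1}}$. The local matrix identity $\alpha_p \cdot W_{p^b} = p \cdot W_{p^{b-1}}$ then yields $(V_p^j h_{\epsilon_M^{-1}})\mid_l W_{p^b} = p^{-j}\, h_{\epsilon_M^{-1}}\mid_l W_{p^{b-j}}$, converting each $V_p^j$ in the depletion polynomial into an Atkin--Lehner of smaller index, which is where the substitution $V_p \mapsto p^{-l}X^{-1}$ producing the factor $P_p(h, p^{-l}X^{-1})$ originates. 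The Atkin--Lehner images can be computed at $p^u$-Fourier coefficients by noting that $h_{\epsilon_M^{-1}}$ agrees with the newform twist $h \otimes \epsilon_M^{-1}$ at all prime-to-$M$ coefficients: combining $h \mid_l W_{p^b} = \lambda_{p^b}(h)(h \otimes \epsilon_p^{-1})$ with $(h \otimes \epsilon_p^{-1}) \otimes \epsilon_M^{-1} = h \otimes \epsilon^{-1} = h^*$ and the generating function identity $\sum_v a_{p^v}(h^*) X^v = P_p(h^*, X)^{-1}$ produces the denominator $P_p(h^*, X)$, together with the scalar $\epsilon_M^{-1}(p^b)\lambda_{p^b}(h)$ from the Atkin--Lehner pseudo-eigenvalue and the twist correction.

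\textbf{Main obstacle.} The delicate part is keeping careful track of the various diamond operators and character twists, in particular distinguishing the naive twist $h_{\epsilon_M^{-1}}$ (used in the statement of the lemma) from the newform twist $h \otimes \epsilon_M^{-1}$ (more natural for newform-based calculations); these differ at Fourier coefficients involving primes dividing $M$. Fortunately, because only the $p^u$-Fourier coefficients enter the identity, and each $p^u$ is coprime to $M$, the two twists give the same values on the relevant coefficients, allowing a clean reduction to pure newform computations. A secondary subtlety is the degenerate supercuspidal case $a_p(h) = 0$ (occurring when $b \geq 2$): here $h^{[p]} = h$, both $P_p(h, X)$ and $P_p(h^*, X)$ reduce to $1$, and the formula collapses to $\epsilon_M^{-1}(p^{b+c}) \lambda_{p^b}(h) (p^{l-1}X)^c$; the same argument applies uniformly.
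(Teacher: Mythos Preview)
Your approach is genuinely different from the paper's. The paper proves the key observation
\[
\sum_{u\ge 0} X^{u} a_{p^{u}}\big(h^{[p]}\mid_{l} W_{R}\big)=\lambda_{p^{b}M}(h)\,(p^{l-1}X)^{c}\,\frac{P_{p}(h,\,p^{-l}X^{-1})}{P_{p}(h^{*},X)}
\]
by invoking the \emph{functional equations} of $\Lambda(h,s)$ and $\Lambda(h^{[p]},s)$ together with $\Lambda(h^{[p]},l-s)=\Lambda(h,l-s)\,P_{p}(h,p^{s-l})$; it then passes from $W_{R}$ to $W_{p^{b+c}}$ via the factorisation $W_{R}=\langle p^{b+c}\rangle_{M}W_{M}W_{p^{b+c}}$ and the multiplicativity $\lambda_{p^{b}M}(h)=\lambda_{p^{b}}(h)\lambda_{M}(h)$. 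Your plan instead works entirely by local matrix identities and the depletion formula $h^{[p]}=P_{p}(h,V_{p})\,h$, which is more hands-on and arguably more transparent about where each factor comes from, whereas the paper's argument is shorter because the functional equation already packages the Atkin--Lehner involution and the $L$-series together.

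There is, however, a concrete organisational gap in your two-step decomposition. In your first reduction you write $h_{\epsilon_M^{-1}}^{[p]}\mid W_{p^{b+c}}$ as $V_{p}^{c}$ applied to $h_{\epsilon_M^{-1}}^{[p]}\mid W_{p^{b}}$; but $h^{[p]}$ has $p$-level strictly larger than $p^{b}$ (it is $p^{b+1}$ or $p^{b+2}$), so $W_{p^{b}}$ is not a well-defined Atkin--Lehner involution on it, and the slashed form has no obvious $q$-expansion to which $V_{p}^{c}$ can be applied. Likewise, in your second reduction the identity $(V_{p}^{j}h_{\epsilon_M^{-1}})\mid W_{p^{b}}=p^{-j}\,h_{\epsilon_M^{-1}}\mid W_{p^{b-j}}$ produces $W_{p^{b-j}}$ with $b-j<0$ when $b<2$ (e.g.\ the crystalline case $b=0$). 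The fix is to merge the two steps: expand the depletion \emph{first} and then apply the single matrix identity
\[
\begin{psmallmatrix} p^{j}&0\\0&1 \end{psmallmatrix} W_{p^{b+c}} \;=\; p^{j}\cdot W_{p^{b}} \begin{psmallmatrix} p^{c-j}&0\\0&1 \end{psmallmatrix}
\]
(together with the diamond correction $\langle p^{-(c-j)}\rangle_{M}$), giving $(V_{p}^{j}h_{\epsilon_M^{-1}})\mid W_{p^{b+c}}=(\mathrm{const})\,V_{p}^{c-j}(h_{\epsilon_M^{-1}}\mid W_{p^{b}})$. Here $c-j\ge 0$ (since $c$ is large enough for $h^{[p]}$ to have level $R$) and $h_{\epsilon_M^{-1}}$ genuinely has $p$-level $p^{b}$, so both sides make sense. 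With this reorganisation your argument goes through; your handling of the naive-versus-newform twist issue is fine, since the twist by the prime-to-$p$ character $\epsilon_{M}^{-1}$ commutes with $W_{p^{b}}$ up to the factor $\epsilon_{M}^{-1}(p^{b})$, and you only need the $p^{u}$-coefficients of the result.
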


\begin{proof}

     The key observation is that \begin{align*} \label{eq:keylemma1}
         \sum\limits_{u \geq 0} X^{u} a_{p^{u}}\left(\left.h^{[p]}\right|_{l} W_{R}\right)=\lambda_{p^{b}M}(h)\left(p^{l-1} X\right)^{c} \frac{P_{p}\left(h, p^{-l} X^{-1}\right)}{P_{p}\left(h^{*}, X\right)}.
     \end{align*}
     This follows readily from the functional equations of $h$ and $h^{[p]}$, combined with the relation
         $\Lambda\left( h^{[p]},l-s\right)=\Lambda\left( h,l-s\right) P_{p}\left( h,p^{-\left( l-s\right)}\right).$

 The Lemma now follows using the above displayed equation, the relation $W_{R}=\langle p^{b+c}\rangle_{M}W_{M}W_{p^{b+c}}$, and the fact that $\lambda_{p^{b}M}\left( h\right)=\lambda_{p^{b}}\left( h\right)\lambda_{M}\left( h\right)$ under the convention of \cite{klz17}.
\end{proof}

\begin{lemma} \label{lem:correctionterm}  Let $b$ be the power of $p$ at which $g$ is new. Then
\begin{align*}
    &\sum\limits_{n \in S(pN)} a_n(\left( f_{\beta} \mid W_{N_{1}}\right)^{*}) a_n(g_{\epsilon^{-1}}^{[pN]} \mid W_{p^{2r}}) n^s \\
    =& \overline{\lambda_{N_{1}}\left( f_{\beta}\right)} \epsilon^{-1}\left(p^{2r} \right)  \lambda_{p^b}\left(g \right) \left(\alpha p^{s+l-1} \right)^{2r-b} \frac{P_{p}\left( g, p^{-l-s} \alpha^{-1}\right)}{P_{p}\left(g^{*}, \alpha p^{s}\right)}.
\end{align*}

\end{lemma}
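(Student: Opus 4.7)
The overall strategy is to exploit the $N$-depletion to reduce the sum to a geometric series indexed by powers of $p$, and then invoke Lemma~\ref{keylemma}. First I would observe that, by the convention at the end of Section~1.2, $W_{p^{2r}}$ commutes with $N$-depletion, so $B := g_{\epsilon^{-1}}^{[pN]} \mid W_{p^{2r}}$ is still $N$-depleted, and in particular $a_n(B) = 0$ whenever $\gcd(n, N) > 1$. Combining this with $n \in S(pN)$ forces $n = p^u$ for $u \geq 0$, so that
\[
 \sum_{n \in S(pN)} a_n(A)\, a_n(B)\, n^s = \sum_{u \geq 0} a_{p^u}(A)\, a_{p^u}(B)\, p^{us},
\]
where $A := (f_\beta \mid W_{N_1})^*$.

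For the $A$-factor, since $W_{N_1}$ and $U_p$ are operators at disjoint sets of primes they commute, hence $f_\beta \mid W_{N_1}$ is a $U_p$-eigenform with the same eigenvalue $\mu$ as $f_\beta$. This gives $a_{p^u}(f_\beta \mid W_{N_1}) = \mu^u\, a_1(f_\beta \mid W_{N_1})$, and I would identify $a_1(f_\beta \mid W_{N_1})$ with the Atkin--Lehner pseudo-eigenvalue $\lambda_{N_1}(f_\beta)$ via the relation $f_\beta \mid W_{N_1} = \lambda_{N_1}(f_\beta)\cdot(\text{form with } a_1 = 1)$, extended in the natural way to the $p$-stabilised (old-form) case. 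Taking complex conjugates and using $\overline{a_n(f)} = \psi(n)^{-1} a_n(f)$ for $\gcd(n, N) = 1$ to match $\overline{\mu}$ with $\alpha$ (any residual prime-to-$p$ character factor being absorbed into $\overline{\lambda_{N_1}(f_\beta)}$) then yields $a_{p^u}(A) = \alpha^u\, \overline{\lambda_{N_1}(f_\beta)}$.

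For the $B$-factor, the $N$-depletion does not affect Fourier coefficients at pure powers of $p$, so $a_{p^u}(B) = a_{p^u}(g_{\epsilon^{-1}}^{[p]} \mid W_{p^{2r}})$. Specialising Lemma~\ref{keylemma} with $h = g$, $c = 2r - b$ and $X = \alpha p^s$ then produces
\[
 \sum_{u \geq 0}(\alpha p^s)^u\, a_{p^u}(B) = \epsilon^{-1}(p^{2r})\,\lambda_{p^b}(g)\,(\alpha p^{s+l-1})^{2r-b}\,\frac{P_p(g, p^{-l-s}\alpha^{-1})}{P_p(g^*, \alpha p^s)},
\]
and multiplying by $\overline{\lambda_{N_1}(f_\beta)}$ recovers the claimed identity. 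The main obstacle is producing the precise factor $\alpha^u$ (in place of the naive $\overline{\beta}^u$) in the $A$-computation; this demands careful bookkeeping of complex conjugation on the Hecke-polynomial roots at $p$, together with the (slightly non-standard) pseudo-eigenvalue convention for $p$-stabilisations, and should closely mirror the analogous calculation in the appendix of \cite{loe18}.
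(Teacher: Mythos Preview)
Your proposal is correct and follows essentially the same route as the paper's proof: reduce to pure $p$-powers via the commutation of $W_{p^{2r}}$ with $N$-depletion, extract $\overline{\lambda_{N_1}(f_\beta)}\,\alpha^u$ from the $A$-term, and then apply Lemma~\ref{keylemma} with $X=\alpha p^s$. The paper is in fact even terser than you are about the passage from $\overline{a_{p^u}(f_\beta\mid W_{N_1})}$ to $\overline{\lambda_{N_1}(f_\beta)}\,\alpha^u$ (it simply writes the equality and remarks that $\lambda_{N_1}(f_\beta)$ is shorthand for $\lambda_{N_1}(f)$ in the crystalline case), so your flagging of this step as the point requiring careful bookkeeping is apt.
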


\begin{proof}

We compute that
\begin{align*}
     &\sum\limits_{n \in S(pN)} a_n(\left( f_{\beta} \mid W_{N_{1}}\right)^{*}) a_n(g_{\epsilon^{-1}}^{[pN]} \mid W_{p^{2r}}) n^s \\
     =&\sum\limits_{n \in S(pN)} a_n(\left( f_{\beta} \mid W_{N_{1}}\right)^{*}) a_n\left(\left(g_{\epsilon^{-1}}^{[p]} \mid W_{p^{2r}}\right)^{[N]} \right) n^s \\
     =& \overline{\lambda_{N_{1}}\left( f_{\beta}\right)} \cdot \sum\limits_{u \geq 0} \alpha^{u} a_{p^{u}}\left(g_{\epsilon^{-1}}^{[p]} \mid W_{p^{2r}} \right) p^{us},
\end{align*}  where we have written $\lambda_{N_{1}}\left( f_{\beta}\right)$ for $\lambda_{N_{1}}\left( f\right)$ in the crystalline case for the ease of notations. Then Lemma \ref{keylemma} yields
\begin{align*}
    &\sum\limits_{u \geq 0} \overline{a_{p^{u}} \left( f_{\beta} \mid W_{N_{1}} \right) } a_{p^{u}}\left(g_{\epsilon^{-1}}^{[p]} \mid W_{p^{2r}} \right) p^{us} \\
    =& \sum\limits_{u \geq 0} \overline{\lambda_{N_{1}}\left( f_{\beta}\right)} \left(  \alpha p^s \right)^{u} a_{p^{u}}\left(g_{\epsilon^{-1}}^{[p]} \mid W_{p^{2r}} \right) \\
    =&\ \overline{\lambda_{N_{1}}\left( f_{\beta}\right)} \epsilon^{-1}\left( p^{2r}\right) \lambda_{p^{b}}\left( g\right)\left(\alpha p^{s+l-1}\right)^{2r-b} \dfrac{P_{p}\left( g,p^{-l-s} \alpha^{-1}\right)}{P_{p}\left( g^{*}, \alpha p^{s}\right)}.\qedhere
\end{align*}
\end{proof}

Putting everything together, we obtain the theorem.

%

\providecommand{\bysame}{\leavevmode\hbox to3em{\hrulefill}\thinspace}
\providecommand{\MR}[1]{}
\renewcommand{\MR}[1]{%
 MR \href{https://www.ams.org/mathscinet-getitem?mr=#1}{#1}.
}
\providecommand{\href}[2]{#2}
\newcommand{\articlehref}[2]{\href{#1}{#2}}

\end{document}